\documentclass[12pt]{amsart}
\usepackage{amssymb,amsmath,amsfonts,latexsym}
\usepackage{bm, enumerate}
\usepackage{mathtools}
\usepackage{xcolor}
\setlength{\textheight}{600pt} \setlength{\textwidth}{475pt}
\oddsidemargin -0mm \evensidemargin -0mm \topmargin -0pt
\newcommand{\newsection}[1]{\setcounter{equation}{0} \section{#1}}
\setcounter{footnote}{1}

\newcommand{\bea}{\begin{eqnarray}}
\newcommand{\eea}{\end{eqnarray}}

\newcommand{\cla}{\mathcal{A}}
\newcommand{\clb}{\mathcal{B}}
\newcommand{\clc}{\mathcal{C}}
\newcommand{\cld}{\mathcal{D}}
\newcommand{\cle}{\mathcal{E}}
\newcommand{\clf}{\mathcal{F}}

\newcommand{\clh}{\mathcal{H}}
\newcommand{\clk}{\mathcal{K}}
\newcommand{\cll}{\mathcal{L}}
\newcommand{\clm}{\mathcal{M}}
\newcommand{\cln}{\mathcal{N}}
\newcommand{\clo}{\mathcal{O}}

\newcommand{\clq}{\mathcal{Q}}
\newcommand{\clr}{\mathcal{R}}
\newcommand{\cls}{\mathcal{S}}
\newcommand{\clt}{\mathcal{T}}

\newcommand\bH{\mathbb{H}}

\newcommand{\B}{\mathbb{B}}
\newcommand{\C}{\mathbb{C}}
\newcommand{\D}{\mathbb{D}}
\newcommand{\N}{\mathbb{N}}
\newcommand{\Z}{\mathbb{Z}}
\newcommand{\z}{\bm{z}}
\newcommand{\w}{\bm{w}}

\def \qed {\hfill \vrule height6pt width 6pt depth 0pt}
\def\textmatrix#1&#2\\#3&#4\\{\bigl({#1 \atop #3}\ {#2 \atop #4}\bigr)}
\def\dispmatrix#1&#2\\#3&#4\\{\left({#1 \atop #3}\ {#2 \atop #4}\right)}
\newcommand{\be}{\begin{equation}}
\newcommand{\ee}{\end{equation}}
\newcommand{\ben}{\begin{eqnarray*}}
\newcommand{\een}{\end{eqnarray*}}

\newcommand{\bi}{\begin{itemize}}
\newcommand{\ei}{\end{itemize}}

\newtheorem{Theorem}{\sc Theorem}[section]
\newtheorem{Lemma}[Theorem]{\sc Lemma}
\newtheorem{Proposition}[Theorem]{\sc Proposition}
\newtheorem{Corollary}[Theorem]{\sc Corollary}
\newtheorem{Definition}[Theorem]{\sc Definition}
\newtheorem{Example}[Theorem]{\sc Example}
\newtheorem{Remark}[Theorem]{\sc Remark}

\newtheorem{Remarks}[Theorem]{\sc Remarks}
\newtheorem{Note}[Theorem]{\sc Note}
\newtheorem{Question}{\sc Question}
\newtheorem{ass}[Theorem]{\sc Assumption}
\newcommand{\bt}{\begin{Theorem}}
\def\beginlem{\begin{Lemma}}
\def\beginprop{\begin{Proposition}}
\def\begincor{\begin{Corollary}}
\def\begindef{\begin{Definition}}
\def\beginexamp{\begin{Example}}
\def\beginrem{\begin{Remark}}
\def\beginq{\begin{Question}}
\def\beginass{\begin{ass}}
\def\beginnote{\begin{Note}}
\newcommand{\et}{\end{Theorem}}
\def\endlem{\end{Lemma}}
\def\endprop{\end{Proposition}}
\def\endcor{\end{Corollary}}
\def\enddef{\end{Definition}}
\def\endexamp{\end{Example}}
\def\endrem{\end{Remark}}
\def\endq{\end{Question}}
\def\endass{\end{ass}}
\def\endnote{\end{Note}}

\begin{document}

\title[Commutant lifting]{Commutant lifting in several variables }

\author[Barik]{Sibaprasad Barik}
\address{Department of Mathematics, Indian Institute of Technology Bombay, Powai, Mumbai, 400076, 
India}
\email{sbank@math.iitb.ac.in, sibaprasadbarik00@gmail.com}

\author[Bhattacharjee]{Monojit Bhattacharjee}
\address{Department of Mathematics, Indian Institute of Technology Bombay, Powai, Mumbai, 400076,
India}
\email{mono@math.iitb.ac.in, monojit.hcu@gmail.com}

\author[Das] {B. Krishna Das}
\address{Department of Mathematics, Indian Institute of Technology Bombay, Powai, Mumbai, 400076,
India}
\email{dasb@math.iitb.ac.in, bata436@gmail.com}

\subjclass[2010]{47A13, 47A20, 47A45, 47A48, 47A56, 46E22, 47B32, 32A36, 32A70}

\keywords{Commutant lifting, intertwining lifting, weighted Bergman spaces, Schu-Agler functions, hypercontractions}

\begin{abstract}
In this article we study commutant lifting, more generally intertwining 
lifting, for different reproducing kernel Hilbert spaces over two domains in $\C^n$, namely the unit ball and the unit polydisc. The reproducing kernel Hilbert spaces we consider are mainly weighted Bergman spaces. Our commutant lifting results are explicit in nature and that is why these results are new even in one variable $(n=1)$ set up.

\end{abstract}

\maketitle

\section*{Notation}

\begin{list}{\quad}{}
\item $\mathbb N$\quad \quad \quad\quad The set of all natural numbers.
\item $\mathbb Z_{+}$\quad \quad \quad\ The set of all positive integers.
\item $\mathbb{D}$ \quad \quad \quad\ \ Open unit disc in the complex plane
$\mathbb{C}$.

\item  $\mathbb{D}^n$ \; \quad \quad\  \ Open unit polydisc in $\mathbb{C}^n$.

\item $\B^n$\; \quad\quad\ \  \ Open unit ball in $\C^n$.

\item  $\clh$, $\cle$ \; \quad\ \ Hilbert spaces.

\item $\clb(\clh)$ \quad \;\ \ The space of all bounded linear
operators on $\clh$.

\item $H^2_{\cle}(\mathbb{D})$ \quad \ \  $\cle$-valued Hardy space on
$\mathbb{D}$.



\end{list}

All Hilbert spaces are assumed to be over the complex numbers.

\newsection{Introduction} \label{Introduction}

One of the well-studied problems in function theory is the classical interpolation problem of bounded analytic functions: the Carathedory-Fejer interpolation problem and the Nevanlinna-Pick interpolation problem. The classical Nevanlinna-Pick interpolation problem (\cite{Pick, N}) over the unit disk ($\D$) asks the following: Given any set of $n$ distinct points $\{z_i\}_1^n \subseteq \D$ and arbitrary $n$ points $\{w_i\}_1^n \subseteq \D$ whether there exists a bounded holomorphic 
function $\Phi$ on $\D$ with $\|\Phi\|_{\infty}\le 1$ such that $\Phi(z_i)=w_i$ for all $i=1,\dots,n$.  It was Sarason (\cite{Sar}) who first observed that there is a natural operator theoretic connection, by means of a commutant lifting theorem, to this function theoretic problem. The 
key observation was that having such a bounded analytic function $\Phi$
is same as having a lift (co-extension) of a certain operator defined on a certain co-invariant
subspace of $H^2(\D)$, the Hardy space over $\D$.  In this context, Sarason proved that if $\clq$ is a co-invariant subspace of $H^2(\D)$ and if $X\in\clb(\clq)$ is a contraction such that $X(P_{\clq}M_z|_{\clq})=(P_{\clq}M_z|_{\clq})X$ then there exists a $\Phi$
in $H^{\infty}(\D)$, the algebra of all bounded holomorphic functions on $\D$, 
such that $M_{\Phi}^*|_{\clq}=X^*$ and $\|\Phi\|_{\infty}\le 1$ where 
$M_z$ is the shift on $H^2(\D)$ and $M_{\Phi}$ is the multiplication operator on 
$H^2(\D)$ induced by the bounded analytic function $\Phi$. Adding more operator theoretic flavor, Sz.-Nagy and Foias generalized this for arbitrary contractions and prove the following intertwining lifting result (also see  ~\cite{DMP} for a matricial approach): If $T$ and $S$ are contractions on $\clh$ and $\clk$
with minimal isometric dilations (\cite{SF}) $U\in \clb(\tilde{\clh})$ and $V\in\clb(\tilde{\clk})$ respectively, and if $X\in\clb(\clh,\clk)$ is a contraction such that 
$XT=SX$ then there exists a contraction $Y\in \clb(\tilde{\clh},\tilde{\clk})$ satisfying $YU=VY$ and $Y^*|_{\clh}=X^*$. Apart from its function theoretic applications and interesting operator theoretic consequences, intertwining lifting theorem has applications in the control theory (see ~\cite{FF}).
There have been significant amount of research devoted to finding 
multivariate analogues of this intertwining lifting theorem and its counterpart in the setting of reproducing kernel Hilbert spaces over different domains in $\C^n$ with application to interpolations. An incomplete list of references is ~\cite{Ag, AM, AT, BLTT, BT, BTV, DPST, EP, Mu, N, Pick}.   

The aim of this article is to study intertwining lifting theorem for different reproducing kernel Hilbert spaces on two domains in $\C^n$, namely the unit ball $\B^n$ and the unit polydisc $\D^n$. In the case of $\B^n$, let $\bH_m(\B^n,\cle)$ $(m\in\mathbb N)$ be the $\cle$-valued  
weighted Bergman space with kernel 
\[
K_m(\z,\w)=\frac{1}{(1-\langle \z,\w\rangle)^m}I_{\cle}, \quad (\z,\w\in\B^n)
\]
for some coefficient Hilbert space $\cle$. A closed subspace $\clq$ of $\bH_m(\B^n,\cle)$ is a co-invariant subspace if $\clq$ is jointly invariant under
the adjoint of the $n$-tuple of shifts $(M_{z_1},\dots, M_{z_n})$ on $\bH_m(\B^n,\cle)$. One of the problems that we consider in this setting is the following.

\textit{
If $\clq_1\subseteq \bH_m(\B^n,\cle)$ and $\clq_2\subseteq \bH_m(\B^n,\cle_*)$ are co-invariant subspaces for some Hilbert spaces $\cle$ and $\cle_*$ and if $X\in \clb(\clq_1,\clq_2)$ is a contraction such that 
\begin{equation}\label{intertwining}
X(P_{\clq_1}M_{z_i}|_{\clq_1})=(P_{\clq_2}M_{z_i}|_{\clq_2})X
\end{equation}  
for all $i=1,\dots,n$ then does there exists a Schur-Agler function (defined below) $\Phi: \B^n\to \clb(\cle,\cle_*)$ such that $X^*=M_{\Phi}^*|_{\clq_2}$?}

\noindent It is because of the fact that Schur-Agler class of functions is more tractable in the sense of its transfer function realization
 (see Theorem~\ref{char Schur-Agler class in ball}  below),
 we seek liftings corresponding to multipliers in this class. Also note that for a Schur-Agler function $\Phi: \B^n\to \clb(\cle,\cle_*)$,
the corresponding multiplication operator $M_{\Phi}: \bH_m(\B^n,\cle)\to \bH_m(\B^n,\cle_*)$ defined by $f\mapsto \Phi f$ is a bounded operator
 (see Proposition~\ref{inclusion}  below).
It turns out that it is not possible to find a lift of $X$ in the Schur-Agler class multipliers, in general. We find a necessary
and sufficient condition on $X$ for which it has a lift
 in the Schur-Agler class. 
 The necessary and sufficient condition is simply that 
\[
(1-\sigma_S)^{m-1}(I-XX^*)\ge 0
\]
where $S=(P_{\clq_2}M_{z_1}|_{\clq_2},\dots, P_{\clq_2}M_{z_n}|_{\clq_2})$ and 
$\sigma_S:\clb(\clq_2)\to\clb(\clq_2)$ is defined by 
\[Y\mapsto \sum_{i=1}^nS_iYS_i^*.\]
In the case when $X$ satisfies the above the positivity condition, we also 
obtain an explicit description of its lift $\Phi$ by means of finding a unitary whose transfer function is $\Phi$. 
We should mention here that the present work is based on ideas found in the context of sharp von Neumann inequality on distinguished varieties in ~\cite{DS}.
For $m=1$, the Hilbert space
 $\bH_1(\B^n,\cle)$ is known as the Drury-Arveson space and the above positivity condition is satisfied by any contraction $X$, in this case. Therefore, in the case of Drury-Arveson space any contraction satisfying the intertwining relation ~\eqref{intertwining} can be lifted to a multiplier corresponding to a Schur-Agler function. 
This result is known (see ~ \cite{AT, BTV, DL, EP}). However, our proof is not only new but 
also provides an explicit description of the lifting. The explicitness of the lifting is naturally important from the point of view of its application in interpolation. As an immediate consequence, this also provides an alternate proof of transfer function realization of Schur-Agler functions on $\B^n$ which was proved earlier in ~\cite{EP} and also in ~\cite{BTV}. 
We continue our study to consider the above intertwining lifting problem for the case when 
$\clq_1$ is a co-invariant subspace of $\bH_m(\B^n,\cle)$ and $\clq_2$ is a 
co-invariant subspace of $\bH_p(\B^n,\cle_*)$ for $p>m$ (see Theorem~\ref{ILT for p>m}) and which led to factorization of certain type of multipliers (see Theorem ~\ref{factorization}). 
A particular case, namely $m=1$ and $p>1$, is studied recently in
 ~\cite{DPST}.  

Several variable analogue of Sz.-Nagy and Foias intertwining lifting theorem is rather complicated and fails in general (see \cite{FF, Mu}). In particular, Muller~\cite{Mu} showed that if $(T_1, T_2)$ is
a pair of commuting contractions on $\clh$
with minimal regular dilation $ (U_1, U_2)$ on $\clk$  (\cite{SF}) and if
$X \in \clb(\clh)$ commutes with $T_1$ and $T_2$, 
then there does not exist any operator $Y \in \clb(\clk)$ which 
commutes with  $U_1$ and $ U_2$ such that $X = P_{\clh}Y |_{\clh}$. 
On the other hand, in the setting of $\cle$-valued Hardy space over the unit polydisc $H^2_{\cle}(\D^n)$ $(n \ge 2)$ Ball, Li, Timotin and Trent (\cite{BLTT}) found a necessary and sufficient condition for intertwining lifting in the Schur-Agler class of functions on $\D^n$. In this article, we generalize this result in the setting of weighted Bergman spaces over $\D^n$. To be more precise,  corresponding to each $\bm\gamma=(\gamma_1,\dots,\gamma_n)\in \N^n$, let $A^2_{\bm\gamma}(\D^n,\cle)$ be the $\cle$-valued weighted Bergman space over $\D^n$ with kernel 
 \[
 K_{\bm\gamma}(\z,\w)=\prod_{i=1}^n (1-z_i\bar{w_i})^{-\gamma_i}I_{\cle}
 \quad (\z,\w\in \D^n),
 \] 
 where $\cle$ is a Hilbert space. If $\clq_i$ is a co-invariant subspace 
 of $A^2_{\bm\gamma}(\D^n,\cle_i)$ for $i=1,2$ and if $X\in\clb(\clq_1,\clq_2)$ is a contraction such that for all $i=1,\dots,n$,
 \[
X (P_{\clq_1}M_{z_i}|_{\clq_1})= (P_{\clq_2}M_{z_i}|_{\clq_2})X,
 \]
then we find a necessary and sufficient condition on $X$ for which there is a Schur-Agler function (defined below) $\Phi:\D^n\to \clb(\cle_1,\cle_2)$ such that $X^*=M_{\Phi}^*|_{\clq_2}$  (see Theorem~\ref{Main Theorem in D^n} below for more details). In the case when $X$ satisfies the 
necessary and sufficient condition, we find such a Schur-Agler function explicitly.
 This 
uses an appropriate modification of techniques found in ~\cite{BLTT}.    
 
The plan of the paper is as follows. In the next section, we define reproducing kernels on $\B^n$ and $\D^n$ which we consider and describe few properties of their multiplier algebras. In Section~\ref{Commutant lifting in B^n}, we study intertwining lifting theorem for reproducing kernels on $\B^n$. The intertwining 
lifting theorem for kernels on $\D^n$ is considered in Section ~\ref{Commutant lifting in D^n}. We conclude the paper with some examples and remarks in Section~\ref{Concluding section}.

\newsection{Preliminaries} \label{Preliminaries}

For an arbitrary set $\Lambda$, an operator-valued function
$K: \Lambda \times \Lambda \to \clb(\clh)$ is said to be \textit{positive definite} if 
$ \sum_{i,j}^n \langle K(\lambda_i, \lambda_j)\eta_i, \eta_j \rangle \geq 0$ for every choice of $\lambda_1,\ldots,\lambda_n \in \Lambda $
and $\eta_1,\ldots, \eta_n \in \clh$. 
A renowned theorem of Kolmogorov and Aronszajn (Theorem I.5.1 in \cite{MP}) completely characterizes all positive definite functions. It says that a function $K: \Lambda \times \Lambda \to \clb(\clh)$ is positive definite if and only if there exists a Hilbert space $\clk$ and a function
$F: \Lambda \to \clb(\clk, \clh)$ such that 
$K(z,w)= F(z)F(w)^*$ for all $z,w \in \Lambda$. In this article, we mainly deal with kernel functions
on two different  domains in $\C^n$,
namely the polydisc ($\D^n$) and the unit ball ($\B^n$). 
Multiplier algebras of reproducing kernel Hilbert spaces corresponding 
to such kernel functions are inevitable in the present consideration.
A brief descriptions of these are considered in the subsections below.
For Hilbert spaces $\cle_1$ and $\cle_2$ and for any domain
$\Lambda \subseteq \C^n$, we denote by $H^{\infty}(\Lambda, \clb(\cle_1, \cle_2))$ the Banach algebra of all bounded analytic functions $\phi : \Lambda \to \clb(\cle_1, \cle_2)$ with respect to the norm $\|\phi\|_{\infty} = \text{sup}\, \{ \|\phi(z)\| : z \in \Lambda \}$ and by $H^{\infty}_1(\Lambda, \clb(\cle_1, \cle_2))$ we denote the closed unit ball of $H^{\infty}(\Lambda, \clb(\cle_1, \cle_2))$. 


\subsection{Bergman Spaces over the unit ball $\B^n$:}

For every $m\in\N$, the positive definite function
$K_m: \B^n \times \B^n \to \C$ defined by 
\[ K_m(\bm z, \bm w) = (1 - \langle \bm z,\bm w \rangle)^{-m} = ( 1 - \sum_{i=1}^n z_i \bar{w}_i )^{-m}, \quad \quad (\bm z,\bm w \in \B^n). \] 
is known as the kernel for weighted Bergman space over 
$\B^n$. 
For an arbitrary Hilbert space $\cle$, we denote by
$\mathbb{H}_m(\B^n, \cle)$ the $\cle$-valued weighted Bergman space over $\B^n$ with kernel 
\[ (\bm z,\bm w) \to K_m(\bm z,\bm w)I_{\cle}\quad 
(\bm z,\bm w \in \B^n).\]
  Note that, for each $\bm z\in \B^n$, we have (cf. page 983, \cite{MV}) 
\begin{equation} \label{rhom}
(1 - \sum_{i=1}^n z_i)^{-m} = \sum_{\bm k \in \Z_+^n} \rho_m(\bm k){\bm z}^{\bm k},
\end{equation}
where $\rho_m(\bm k) = \frac{(m+ |\bm k| - 1)!}{\bm k ! (m-1)!}$ and 
${\bm z}^{\bm k} = z_1^{k_1}\ldots z_n^{k_n}$ for all $\bm k \in \Z_+^n$. Using this, one can represent the Hilbert function space
$\mathbb{H}_m(\B^n, \cle)$ in the following concrete way
(see \cite{Ar} and \cite{MV}) 
\[ \mathbb{H}_m(\B^n, \cle) = \{ f \in \sum_{\bm k \in \Z_+^n} a_{\bm k}{\bm z}^{\bm k} \in \clo(\B^n, \cle): \|f\|^2:= \sum_{\bm k \in \Z_+^n} \frac{\|a_{\bm k}\|^2_{\cle}}{\rho_m(\bm k)} < \infty \}. \]
For $m=1,n$ and $n+1$, the corresponding Hilbert spaces
$\mathbb{H}_1(\B^n, \cle), 
\mathbb{H}_n(\B^n, \cle)$ and $\mathbb{H}_{n+1}(\B^n, \cle)$
are known as the $\cle$-valued Drury-Arveson space, the $\cle$-valued Hardy space
and the $\cle$-valued Bergman space over $\B^n$, respectively.
Following standard notation, we denote the Drury-Arveson space $\mathbb{H}_1(\B^n, \cle)$ by $H^2_n(\cle)$. 
The commuting $n$-tuple of co-ordinate multiplication operators $(M_{z_1},\dots,M_{z_n})$ on $\bH_m(\B^n,\cle)$ are called shifts on $\bH_m(\B^n,\cle)$. These shifts on weighted Bergman spaces are models of a class of commuting $n$-tuple of operators which we define next. 
A commuting $n$-tuple of commuting contractions $T=(T_1,\dots,T_n)$ 
on $\clh$ is said to be an \textit{$m$-hypercontraction} if 
\[
(1-\sigma_T)^i(I_{\clh})\ge 0
\]
for $i=1,m$ where $\sigma_T: \clb(\clh)\to \clb(\clh)$ is a completely positive map 
defined by 
\begin{equation}\label{CP}
\sigma_T(X)=\sum_{i=1}^n T_iXT_i^* \quad (X\in\clb(\clh)).
\end{equation}
It turns out that the above positivity for $i=1,m$ is equivalent to the positivity for all $i=1,2,\dots,m$ (see ~\cite[Lemma 2]{MV}). In other words if $T$ is an $m$-hypercontraction then it is also $p$-hypercontraction for all $p=1,\dots,m$. 
An $m$-hypercontraction $T$ on $\clh$ is said to be \textit{pure} if 
$\sigma_T^j(I_{\clh}) \to 0$ in strong operator topology as $j \to \infty$.
The \textit{defect operator} and the \textit{defect space} of $T$  is denoted by $D_{m,T^*}$ and $\cld_{m,T^*}$ respectively and defined by 
\[ D_{m,T^*} = [(1- \sigma_T)^m(I_{\clh})]^{\frac{1}{2}} \ \text{ and }
\cld_{m,T^*} = \overline{\text{ran}}D_{m,T^*}.
 \]
If $T$ is a pure $m$-hypercontraction, then the  \textit{canonical dilation map}
$\pi_T : \clh \to \bH_m(\B^n, \cld_{m,T^*})$, defined by 
\begin{equation}\label{dilation map for m-hyper in ball set up} 
(\pi_T h)(\bm z) 
 = \sum_{\bm k \in \Z_+^n}\rho_m(\bm k)(D_{m,T^*} T^{*\bm k}h) {\bm z}^{\bm k}  \quad \quad (h \in\clh, \bm z \in \B^n)
\end{equation}
is an isometry and 
\[ \pi_T T_i^* = M_{z_i}^* \pi_T \quad \quad (i=1,\ldots,n),\]
where $\rho_m(\bm k)$ is as in ~\eqref{rhom}. In other words, 
\[
T\cong (P_{\clq}M_{z_1}|_{\clq},\dots, P_{\clq}M_{z_n}|_{\clq}),
\]
where $\clq=\mbox{ran} \pi_T$ is a co-invariant subspace of
 $\bH_m(\B^n,\cld_{m,T^*})$. This well-known models of pure $m$-hypercontractions is obtained in ~\cite{MV}.

For any two arbitrary Hilbert spaces $\cle_1$ and $\cle_2$ and $m,p\in\N$, a holomorphic function $\Phi: \B^n \to \clb(\cle_1,\cle_2)$
is said to be a \textit{multiplier} from
$\mathbb{H}_m(\B^n, \cle_1)$ to $\mathbb{H}_p(\B^n, \cle_2)$ if 
\[ \Phi f \in \mathbb{H}_p(\B^n, \cle_2) \ \text{for all}\,\, f \in \mathbb{H}_m(\B^n, \cle_1) .\]
We denote by $\clm(\mathbb{H}_m(\B^n, \cle_1), \mathbb{H}_p(\B^n, \cle_2) )$ the space of all multipliers from $\mathbb{H}_m(\B^n, \cle_1)$ to $\mathbb{H}_p(\B^n, \cle_2)$. 
By an immediate consequence of closed graph theorem, 
each member $\Phi$ of  $ \clm(\mathbb{H}_m(\B^n, \cle_1), \mathbb{H}_p(\B^n, \cle_2) )$ induces a bounded linear operator 
\[ M_{\Phi}: \mathbb{H}_m(\B^n, \cle_1) \to \mathbb{H}_p(\B^n, \cle_2), \quad \quad f \mapsto \Phi f, \] 
and it is known as the multiplication operator induced by $\Phi$. 
With the induced norm, that is 
\[ \|\Phi\|: = \|M_{\Phi}\|,\]

$\clm(\mathbb{H}_m(\B^n, \cle_1), \mathbb{H}_p(\B^n, \cle_2) ) \subseteq \mathcal{O}( \B^n, \clb(\cle_1, \cle_2) )$ becomes a Banach space. The unit ball of $\clm(\mathbb{H}_m(\B^n, \cle_1), \mathbb{H}_p(\B^n, \cle_2) )$
is denoted by $\clm_1(\mathbb{H}_m(\B^n, \cle_1), \mathbb{H}_p(\B^n, \cle_2) )$ and elements of the unit ball are called contractive multipliers.
It is worth mentioning the following recent characterization of multipliers on weighted Bergman spaces. Although the characterization is valid for large class of reproducing kernel Hilbert spaces but 
we sate it only for weighted Bergman spaces.
 \begin{Theorem}[cf. \cite{JS}] 
 \label{mul-char-ball}
	Let $X \in \clb(\bH_m(\B^n, \cle_1), \bH_p(\B^n, \cle_2))$. Then 
	\[ XM_{z_i}  = M_{z_i} X, \quad \quad (i=1,\ldots,n) \]
 if and only if there exits $\Theta \in  \clm(\mathbb{H}_m(\B^n, \cle_1), \mathbb{H}_p(\B^n, \cle_2) ) $ such that 
	$X=M_{\Theta}$. 
Here $M_{z_i}$ in the left side of the above
identity is the shift on $\bH_m(\B^n,\cle_1)$ and $M_{z_i}$ in the right side is the shift on $\bH_p(\B^n,\cle_2)$.
\end{Theorem}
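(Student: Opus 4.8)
The plan is to prove the nontrivial (``only if'') implication by passing to adjoints and analysing how $X^{*}$ acts on the reproducing kernels; the reverse implication is immediate. Indeed, if $X=M_{\Theta}$ for some $\Theta\in\clm(\bH_m(\B^n,\cle_1),\bH_p(\B^n,\cle_2))$, then for every $f$ and every $i$ we have $M_{z_i}M_{\Theta}f=z_i\,\Theta f=\Theta\,(z_i f)=M_{\Theta}M_{z_i}f$, since scalar multiplication by the coordinate function commutes with left multiplication by the operator-valued $\Theta$; hence $XM_{z_i}=M_{z_i}X$.

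For the converse, I would first record the elementary fact that, in either space, the adjoint shift acts on kernels by $M_{z_i}^{*}\big(K_r(\cdot,\w)\zeta\big)=\bar w_i\,K_r(\cdot,\w)\zeta$ (for $r=m,p$), which is just the reproducing property together with the fact that $z_i$ is a multiplier. Taking adjoints in $XM_{z_i}=M_{z_i}X$ gives $M_{z_i}^{*}X^{*}=X^{*}M_{z_i}^{*}$ (with the shifts now read on the appropriate spaces), so for $\w\in\B^n$ and $\zeta\in\cle_2$,
\[
M_{z_i}^{*}\big(X^{*}(K_p(\cdot,\w)\zeta)\big)=X^{*}\big(M_{z_i}^{*}K_p(\cdot,\w)\zeta\big)=\bar w_i\,X^{*}(K_p(\cdot,\w)\zeta),
\]
and therefore $X^{*}(K_p(\cdot,\w)\zeta)$ lies in the joint eigenspace of $(M_{z_1}^{*},\dots,M_{z_n}^{*})$ on $\bH_m(\B^n,\cle_1)$ at eigenvalue $\bar\w=(\bar w_1,\dots,\bar w_n)$.

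The key step is to identify this joint eigenspace as exactly $\{K_m(\cdot,\w)\eta:\eta\in\cle_1\}$. To see this, suppose $g\in\bH_m(\B^n,\cle_1)$ satisfies $M_{z_i}^{*}g=\bar w_i g$ for all $i$; iterating gives $M_{\z^{\bm{k}}}^{*}g=\overline{\w^{\bm{k}}}\,g$ for every $\bm{k}\in\Z_+^n$. Since the constant function $\eta$ equals $K_m(\cdot,\bm{0})\eta$ and the family $\{\z^{\bm{k}}\eta=M_{\z^{\bm{k}}}\eta:\bm{k}\in\Z_+^n,\ \eta\in\cle_1\}$ is total in $\bH_m(\B^n,\cle_1)$ (by the explicit series representation of the space recalled above), the vector $g$ is determined by
\[
\langle g,\z^{\bm{k}}\eta\rangle=\langle M_{\z^{\bm{k}}}^{*}g,\eta\rangle=\overline{\w^{\bm{k}}}\,\langle g(\bm{0}),\eta\rangle,
\]
and a direct computation shows these equal $\langle K_m(\cdot,\w)g(\bm{0}),\z^{\bm{k}}\eta\rangle$; hence $g=K_m(\cdot,\w)g(\bm{0})$. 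Consequently there is a map $\w\mapsto\Psi(\w)\in\clb(\cle_2,\cle_1)$ with $X^{*}(K_p(\cdot,\w)\zeta)=K_m(\cdot,\w)\Psi(\w)\zeta$.

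Finally I would set $\Theta(\w):=\Psi(\w)^{*}\in\clb(\cle_1,\cle_2)$ and verify $X=M_{\Theta}$: for $f\in\bH_m(\B^n,\cle_1)$,
\[
\langle (Xf)(\w),\zeta\rangle=\langle Xf,K_p(\cdot,\w)\zeta\rangle=\langle f,X^{*}K_p(\cdot,\w)\zeta\rangle=\langle f(\w),\Psi(\w)\zeta\rangle=\langle \Theta(\w)f(\w),\zeta\rangle,
\]
so $(Xf)(\w)=\Theta(\w)f(\w)$, whence $\Theta f=Xf\in\bH_p(\B^n,\cle_2)$ and $\Theta$ is a multiplier. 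Holomorphy of $\Theta$ follows because $\w\mapsto X^{*}(K_p(\cdot,\w)\zeta)$ is conjugate-holomorphic (the kernel is conjugate-holomorphic in its second slot), so $\Psi$ is conjugate-holomorphic and $\Theta=\Psi^{*}$ is holomorphic. The main obstacle is the eigenspace identification: one must rule out eigenvectors other than kernel functions, which is precisely where totality of $\{\z^{\bm{k}}\eta\}$ and the explicit form of $K_m$ are used; everything else is a formal manipulation of the reproducing property.
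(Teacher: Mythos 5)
Your proof is correct. Note that the paper itself offers no argument for this statement: Theorem \ref{mul-char-ball} is quoted from the reference \cite{JS} with ``cf.''\ and no proof is reproduced, so there is nothing internal to compare against. Your route is the standard reproducing-kernel one --- pass to adjoints, observe that $X^{*}$ maps $K_p(\cdot,\w)\zeta$ into the joint eigenspace of $(M_{z_1}^{*},\dots,M_{z_n}^{*})$ at $\bar\w$, identify that eigenspace with $\{K_m(\cdot,\w)\eta:\eta\in\cle_1\}$ via totality of the monomials and the reproducing property, and read off $\Theta=\Psi^{*}$ --- and every step checks out, including the eigenspace identification $g=K_m(\cdot,\w)g(\bm 0)$, which correctly uses that the constant $\eta$ equals $K_m(\cdot,\bm 0)\eta$. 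Two small points you could make explicit: boundedness of $\Psi(\w)$ follows from $\|K_m(\cdot,\w)\Psi(\w)\zeta\|^{2}=K_m(\w,\w)\|\Psi(\w)\zeta\|^{2}\le\|X\|^{2}K_p(\w,\w)\|\zeta\|^{2}$, and for holomorphy of $\Theta$ it is even quicker to note $\Theta(\w)\eta=(X\eta)(\w)$ for constant $\eta$, together with the local boundedness just established.
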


 In ~\cite{Ar}, Arveson shows that the space of multipliers from $\mathbb{H}_p(\B^n, \cle_1)$ to $\mathbb{H}_p(\B^n, \cle_2)$ is strictly contained in $H^{\infty}(\B^n, \clb(\cle_1, \cle_2))$, that is, 
\[ \clm(\mathbb{H}_p(\B^n, \cle_1), \mathbb{H}_p(\B^n, \cle_2) )  \subsetneq H^{\infty}(\B^n, \clb(\cle_1, \cle_2)) .\] 
 In particular, for $p=1$, the space of Drury-Arveson space
 multipliers $\clm(H^2_n(\cle_1), H^2_n(\cle_2) )$ is strictly contained in $H^{\infty}(\B^n, \clb(\cle_1, \cle_2))$. 
 The set of all contractive multipliers form Drury-Arveson space to itself, that is, 
$\clm_1(H^2_n(\cle_1), H^2_n(\cle_2) )$ is known as \textit{Schur-Agler} class of functions on $\B^n$. The Schur-Agler class of functions 
has a well-known characterization in terms of transfer functions
(cf. \cite{EP,BTV}) as follows. 
\begin{Theorem} \label{char Schur-Agler class in ball}
Suppose $\phi: \B^n \to \clb(\cle_1,\cle_2)$ is a holomorphic function. Then the function $\phi \in \clm_1(H^2_n(\cle_1), H^2_n(\cle_2) )$ if and only if there exists a Hilbert space $\clk$ and a unitary 
\[ U= \begin{bmatrix}
A & B \cr
C & D
\end{bmatrix}: \cle_1 \oplus \clk \to \cle_2 \oplus \clk^n \]
such that $\phi(\bm z) = A + C(I -ZD)^{-1}ZB $ for all $\bm z \in \B^n$, where the row contraction $Z \in \clb( \clk^n, \clk)$ is defined by $Z(h_1,\ldots,h_n) = \sum_{i=1}^n z_ih_i$ $((z_1,\ldots,z_n) \in \B^n)$. 
\end{Theorem}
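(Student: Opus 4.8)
The plan is to recast the contractive–multiplier condition as positivity of a single reproducing kernel and then realize that kernel through a unitary colligation. First I would record the equivalence that drives everything: for holomorphic $\phi:\B^n\to\clb(\cle_1,\cle_2)$ one has $\phi\in\clm_1(H^2_n(\cle_1),H^2_n(\cle_2))$ if and only if the $\clb(\cle_2)$-valued kernel
\[
k_\phi(\bm z,\bm w)=\frac{I_{\cle_2}-\phi(\bm z)\phi(\bm w)^*}{1-\langle \bm z,\bm w\rangle}
\]
is positive definite on $\B^n$. This is seen by testing $I-M_\phi M_\phi^*$ against the reproducing kernel functions $K(\cdot,\bm w)\eta$ of $H^2_n$: using $M_\phi^*\big(K(\cdot,\bm w)\eta\big)=K(\cdot,\bm w)\phi(\bm w)^*\eta$ one computes
\[
\big\langle (I-M_\phi M_\phi^*)K(\cdot,\bm w)\eta,\;K(\cdot,\bm w')\eta'\big\rangle=\big\langle k_\phi(\bm w',\bm w)\eta,\eta'\big\rangle,
\]
so that $\|M_\phi\|\le1$ (i.e. $I-M_\phi M_\phi^*\ge0$ on the dense span of kernel functions, which in turn forces $M_\phi$ to be a bounded contraction) is exactly positive definiteness of $k_\phi$. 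Here it is essential that the Drury–Arveson kernel $(1-\langle \bm z,\bm w\rangle)^{-1}$ is \emph{the} single reproducing kernel of $H^2_n$; this lets one work with one kernel, in contrast to the polydisc where a family of $n$ kernels is unavoidable.

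For the direction from a realization to a multiplier, given the unitary $U=\begin{bmatrix}A&B\\C&D\end{bmatrix}$ I would expand $\phi(\bm z)\phi(\bm w)^*$ using $\phi(\bm z)=A+C(I-ZD)^{-1}ZB$, substitute the four operator identities coming from $UU^*=I$, and simplify with $Z_{\bm z}Z_{\bm w}^*=\langle\bm z,\bm w\rangle I_\clk$. This standard transfer-function computation yields the fundamental identity
\[
I_{\cle_2}-\phi(\bm z)\phi(\bm w)^*=(1-\langle \bm z,\bm w\rangle)\,G(\bm z)G(\bm w)^*,\qquad G(\bm z)=C(I-ZD)^{-1}.
\]
Dividing by $1-\langle\bm z,\bm w\rangle$ shows $k_\phi=GG^*$ is positive definite, whence $\phi\in\clm_1$ by the first paragraph. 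I expect this direction to be routine.

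The substantive direction is the converse. Assuming $\phi\in\clm_1$, the kernel $k_\phi$ is positive definite, so by the Kolmogorov–Aronszajn factorization recalled in Section~\ref{Preliminaries} there are a Hilbert space $\clk$ and a holomorphic $H:\B^n\to\clb(\clk,\cle_2)$ with $k_\phi(\bm z,\bm w)=H(\bm z)H(\bm w)^*$. Clearing the denominator and regrouping $\langle\bm z,\bm w\rangle$ as $\sum_i z_i\bar w_i$, the identity rearranges to
\[
\big\langle \eta,\eta'\big\rangle+\sum_{i=1}^n\big\langle \bar w_i H(\bm w)^*\eta,\;\bar z_i H(\bm z)^*\eta'\big\rangle=\big\langle \phi(\bm w)^*\eta,\phi(\bm z)^*\eta'\big\rangle+\big\langle H(\bm w)^*\eta,H(\bm z)^*\eta'\big\rangle
\]
for all $\bm z,\bm w\in\B^n$ and $\eta,\eta'\in\cle_2$. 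This says precisely that the map
\[
V:\begin{bmatrix}\eta\\ Z_{\bm w}^*H(\bm w)^*\eta\end{bmatrix}\longmapsto\begin{bmatrix}\phi(\bm w)^*\eta\\ H(\bm w)^*\eta\end{bmatrix}
\]
is a well-defined isometry from a subspace of $\cle_2\oplus\clk^n$ into $\cle_1\oplus\clk$ (the ``lurking isometry''). I would then extend $V$ to a unitary $\widetilde U:\cle_2\oplus\clk^n\to\cle_1\oplus\clk$ and set $U=\widetilde U^*$; writing $U=\begin{bmatrix}A&B\\C&D\end{bmatrix}$ and solving the two coordinate relations defining $V$ for the state $g=H(\bm w)^*\eta$ (that is, inverting $I-ZD$) recovers $\phi(\bm w)=A+C(I-ZD)^{-1}ZB$, after the harmless relabeling of the off-diagonal blocks.

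The main obstacle is extending $V$ to a \emph{unitary} rather than merely a contraction: this requires the defect spaces of $V$ in $\cle_2\oplus\clk^n$ and $\cle_1\oplus\clk$ to have equal dimension. Since $\clk$ is only required to exist, I would handle this in the usual way, enlarging $\clk$ by an infinite-dimensional summand (replacing $H$ by $H\oplus0$, which leaves $k_\phi$ unchanged) so that both defect spaces become infinite-dimensional and a unitary extension exists. One must also verify that $V$ is genuinely well defined and isometric on the span of the indicated vectors—this is exactly the regrouped identity—and that the $\phi$ produced by the realization is holomorphic and equals the original; the latter follows because $\|Z_{\bm z}D\|<1$ on $\B^n$ makes $(I-ZD)^{-1}$ a convergent holomorphic Neumann series.
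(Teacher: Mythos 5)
Your proof is correct, but it follows a genuinely different route from the one this paper takes. You give the classical self-contained argument: reduce contractivity of $M_\phi$ to positive definiteness of the kernel $(I_{\cle_2}-\phi(\bm z)\phi(\bm w)^*)/(1-\langle \bm z,\bm w\rangle)$, factor it \`a la Kolmogorov--Aronszajn, and extract the colligation from a lurking isometry; this is essentially the proof in the cited sources \cite{BTV, EP}. The paper, by contrast, imports the ``if'' direction (a transfer function of a unitary is a Schur--Agler multiplier) from the literature as an ingredient of its commutant lifting machinery, and then recovers the substantive ``only if'' direction as a corollary: taking $\clq_i=H^2_n(\cle_i)$ and $X=M_\Phi$ in the Drury--Arveson commutant lifting theorem, the realizing unitary is the one built in \eqref{Unitary for Arveson space} from the defect identity $D_{1,S^*}^2+\sigma_S(I-XX^*)=(I-XX^*)+XD_{1,T^*}^2X^*$, with convergence of the transfer-function expansion supplied by Lemma~\ref{operator identity in ball set up}. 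What each buys: your lurking-isometry argument is shorter, self-contained, and proves both implications at once, but the state space $\clk$ depends on an arbitrary choice of Kolmogorov factorization $H$; the paper's route requires its whole Section 3 apparatus but yields a unitary written directly in terms of the defect operators of $M_\Phi$ and the shifts, which is the ``explicitness'' the authors are after. One small point in your favor: you correctly note that the off-diagonal blocks in the theorem's displayed formula must be relabeled for the composition $C(I-ZD)^{-1}ZB$ to typecheck against the stated block structure of $U:\cle_1\oplus\clk\to\cle_2\oplus\clk^n$; that mismatch is in the paper's statement, and your handling of it (and of the unitary extension via an infinite-dimensional enlargement of $\clk$) is the standard and correct one.
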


The connection between different multipliers spaces, in particular Schur-Agler class and $\clm_1(\mathbb{H}_p(\B^n, \cle_1), \mathbb{H}_p(\B^n, \cle_2) )$ with $p \geq 2$, plays an important role in this article. It is highlighted in a recent article for general reproducing kernel Hilbert space (see Theorem 4.1 in \cite{CH}) but for our purpose we state a special case of it.
\begin{Proposition}\label{inclusion}
The Schur-Agler class is contained in the class of contractive multipliers of weighted Bergman spaces, that is, for all $p \geq 2$,
	\[ \clm_1(H^2_n(\cle_1), H^2_n(\cle_2) ) \subseteq \clm_1(\mathbb{H}_p(\B^n, \cle_1), \mathbb{H}_p(\B^n, \cle_2) ).\]  
\end{Proposition}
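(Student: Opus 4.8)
The plan is to reduce the claim that $\phi$ is a contractive multiplier of the weighted Bergman space to a statement about positive definiteness of an associated operator-valued kernel, and then to exploit the pointwise factorization $K_p=K_1\cdot K_{p-1}$ together with the Schur product theorem. The tool I would invoke is the standard reproducing-kernel criterion for contractive multipliers: if $K$ is a scalar positive definite kernel on $\B^n$, then a holomorphic $\phi:\B^n\to\clb(\cle_1,\cle_2)$ is a contractive multiplier between the vector-valued spaces with kernels $K(\z,\w)I_{\cle_1}$ and $K(\z,\w)I_{\cle_2}$ if and only if the $\clb(\cle_2)$-valued kernel
\[ (\z,\w)\mapsto\big(I_{\cle_2}-\phi(\z)\phi(\w)^*\big)K(\z,\w) \]
is positive definite. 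This is immediate from the identity $M_\phi^*\big(K(\cdot,\w)\eta\big)=K(\cdot,\w)\phi(\w)^*\eta$ on the dense span of kernel sections, which identifies the Gram matrix of $I-M_\phi M_\phi^*$ on these sections with the block matrix built from $\big(I_{\cle_2}-\phi(\w_i)\phi(\w_j)^*\big)K(\w_i,\w_j)$; contractivity of $M_\phi$ is then precisely positive semidefiniteness of all such matrices.

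Now let $\phi\in\clm_1(H^2_n(\cle_1),H^2_n(\cle_2))$. Since $H^2_n(\cle)=\mathbb{H}_1(\B^n,\cle)$ carries the kernel $K_1(\z,\w)I_{\cle}$ with $K_1(\z,\w)=(1-\langle\z,\w\rangle)^{-1}$, the criterion above says exactly that
\[ (\z,\w)\mapsto\big(I_{\cle_2}-\phi(\z)\phi(\w)^*\big)K_1(\z,\w) \]
is positive definite. On the other hand, for $p\ge 2$ the scalar function $K_{p-1}(\z,\w)=(1-\langle\z,\w\rangle)^{-(p-1)}$ is the reproducing kernel of $\mathbb{H}_{p-1}(\B^n)$; by the expansion in \eqref{rhom} its coefficients $\rho_{p-1}(\bm k)$ are nonnegative, so $K_{p-1}$ is positive definite, and one has the elementary pointwise identity $K_p=K_1\,K_{p-1}$.

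I would then apply the Schur product theorem in the mixed scalar/operator setting: the entrywise product of a scalar positive definite kernel with an operator-valued positive definite kernel is again positive definite (write a scalar Gram matrix as a sum of rank-one positive matrices and distribute over the operator blocks). This yields that
\[ \big(I_{\cle_2}-\phi(\z)\phi(\w)^*\big)K_p(\z,\w)=\Big[\big(I_{\cle_2}-\phi(\z)\phi(\w)^*\big)K_1(\z,\w)\Big]K_{p-1}(\z,\w) \]
is positive definite. Applying the criterion of the first paragraph, this time to the kernel $K_p$, shows that $M_\phi:\mathbb{H}_p(\B^n,\cle_1)\to\mathbb{H}_p(\B^n,\cle_2)$ is a contraction, i.e. $\phi\in\clm_1(\mathbb{H}_p(\B^n,\cle_1),\mathbb{H}_p(\B^n,\cle_2))$, which is the asserted inclusion.

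There is no genuine obstacle here; the argument rests on two standard facts, the reproducing-kernel characterization of contractive multipliers and the preservation of positive definiteness under Schur products with a scalar kernel. The only point requiring mild care is verifying that the Schur-product argument is valid in the operator-valued setting, and the conceptual crux is simply the factorization $K_p=K_1K_{p-1}$ together with the positivity of $K_{p-1}$, which is exactly where the hypothesis $p\ge 2$ enters (for $p=1$ the factor $K_{0}\equiv 1$ is trivially positive and the inclusion degenerates to an identity). I would note that this kernel-theoretic route also makes transparent \emph{why} passing from the Drury--Arveson space to a higher weight only enlarges the contractive multiplier class.
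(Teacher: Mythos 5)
Your argument is correct. Note, however, that the paper does not actually prove Proposition~\ref{inclusion}: it is stated as a special case of a general result of Clou\^atre and Hartz (Theorem 4.1 in \cite{CH}) on multiplier inclusions between reproducing kernel Hilbert spaces, so there is no internal proof to compare against. Your kernel-factorization argument is the standard direct route and is essentially the mechanism underlying the cited result in this special case: the characterization of contractive multipliers by positive definiteness of $\big(I_{\cle_2}-\phi(\z)\phi(\w)^*\big)K(\z,\w)$ (which the paper itself invokes, in the converse direction of its Theorem 3.3, for $K=K_1$), the pointwise factorization $K_p=K_1K_{p-1}$ with $K_{p-1}$ positive definite because $\rho_{p-1}(\bm k)\ge 0$, and the Schur product theorem for a scalar positive definite kernel against an operator-valued one. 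Each step is sound, including the operator-valued Schur product step, which follows by writing the scalar Gram matrix as a sum of rank-one positives and distributing over the blocks. What your approach buys is a short, self-contained proof tailored to the unit-ball kernels $K_m$; what the citation to \cite{CH} buys the authors is the same inclusion in the much broader setting of general complete Nevanlinna--Pick kernels and their powers, which they do not need here. One could tighten your write-up slightly by making explicit that the positivity-implies-multiplier direction of the criterion includes well-definedness of $M_\phi$ (i.e., that $\phi f$ lands in $\mathbb{H}_p(\B^n,\cle_2)$), but that is part of the standard statement you invoke.
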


\subsection{Bergman spaces over the polydisc $\D^n$:}

For each $\bm {\gamma}=( {\gamma}_1,\ldots, {\gamma}_n) \in \N^n$,
 the function $K_{\bm {\gamma}}: \D^n \times \D^n \to \C$ defined by 
\[ K_{\bm {\gamma}}(\bm z,\bm w) = \Pi_{i=1}^n(1- z_i\bar{w}_i)^{- {\gamma}_i}, \quad \quad \bm z=(z_1,\ldots,z_n), \bm w=(w_1,\ldots,w_n) \in \D^n\]
is positive definite and is the kernel of weighted Bergman space over $\D^n$.
For an Hilbert space $\cle$, the kernel for the $\cle$-valued weighted Bergman 
space over $\D^n$ corresponding to $\gamma$ is given by
\[ (\bm z,\bm w ) \to K_{\bm {\gamma}}(\bm z,\bm w)I_{\cle} \]
and we denote the corresponding weighted Bergman space by $A^2_{\bm {\gamma}}(\D^n,\cle)$.
An alternative description of the space $A^2_{\bm {\gamma}}(\D^n,\cle)$ 
is 
\[ A^2_{\bm {\gamma}}(\D^n,\cle)= \big\{ f \in \sum_{\bm k \in \Z_+^n} a_{\bm k}{\bm z}^{\bm k} \in \clo(\D^n, \cle): \|f\|^2:= \sum_{\bm k \in \Z_+^n} \frac{\|a_{\bm k}\|^2_{\cle}}{\rho_{\bm {\gamma}}(\bm k)} < \infty \big\} \] where 
$\rho_{\bm {\gamma}}: \Z_+^n \to \mathbb R_+$
is given by 
\begin{equation}\label{rho for polydisc}
\rho_{\bm {\gamma}}(\bm k) = \frac{(\bm {\gamma}+ \bm k - \bm e)!}{\bm k! (\bm {\gamma} - \bm e)!} \quad \quad (\bm k \in \Z_+^n),
\end{equation}
with $\bm e=(1,\dots,1)$.
In particular, for $\bm {\gamma}=(1,\dots,1)$, the space
 $A^2_{\bm\gamma}(\D^n,\cle)$ is known as the $\cle$-valued Hardy space over $\D^n$. 
Unlike the case of the unit ball, the space of multipliers 
$\clm( A^2_{\bm {\gamma}}(\D^n, \cle_1), A^2_{\bm {\gamma}}(\D^n, \cle_2))$ is $H^{\infty}(\D^n, \clb(\cle_1, \cle_2))$ and for 
 $\phi \in H^{\infty}(\D^n, \clb(\cle_1, \cle_2))$, the 
 corresponding multiplication
 operator is defined by the usual way
\[M_{\phi}: A^2_{\bm {\gamma}}(\D^n, \cle_1) \to A^2_{\bm {\gamma}}(\D^n, \cle_2),\quad (M_{\phi}f)(\bm z) = \phi(\bm z)f(\bm z) \quad \quad (\bm z \in \D^n, \,\, f \in A^2_{\bm {\gamma}}(\D^n,\cle_1)).\] 
It is well-known that an operator $X\in\clb(A^2_{\bm {\gamma}}(\D^n, \cle_1), A^2_{\bm {\gamma}}(\D^n, \cle_2))$ intertwines $(M_{z_1}, \ldots, M_{z_n})$ on $A^2_{\bm {\gamma}}(\D^n,\cle_1)$ and $(M_{z_1}, \ldots, M_{z_n})$ on $A^2_{\bm {\gamma}}(\D^n,\cle_2)$ if and only if $X=M_{\phi}$ for some $\phi \in H^{\infty}(\D^n, \clb(\cle_1,\cle_2))$.  
Due to Varopoulos \cite{V}, transfer function realization for elements in $H^{\infty}_1(\D^n, \clb(\cle_1,\cle_2))$ is not possible in general. But, Agler, in his seminal paper \cite{Ag}, introduces a class inside $H^{\infty}_1(\D^n, \clb(\cle_1,\cle_2))$ which has transfer function realization and is known as Schur-Agler class in $\D^n$. We denote this class by $\cls\cla(\D^n, \clb(\cle_1,\cle_2))$
and it is defined as 
\[  \{ \phi \in H^{\infty}(\D^n, \clb(\cle_1,\cle_2)) : \|\phi(T)\| \leq 1 \text{ for all } T\in\mathcal T^n (\clh) \text{ with }
\|T_i\|<1, i=1,\dots,n\}, \]
where $\mathcal T^n (\clh)$ is the collection of all $n$-tuples of commuting contractions on $\clh$.
Realization in terms of unitary colligation for the members of $\cls\cla(\D^n, \clb(\cle_1,\cle_2))$ is obtained in ~\cite{Ag} and it is a generalization of the fundamental one-dimensional result of Sz.-Nagy and Foias (cf. \cite{SF}).
\begin{Theorem}[cf. ~\cite{Ag}]\label{char Schur-Agler class in D^n}
Let $\phi: \D^n \to \clb(\cle_1,\cle_2)$ be a holomorphic function. Then the function $\phi \in \cls\cla(\D^n, \clb(\cle_1,\cle_2))$ if and only if there exist Hilbert spaces $\clk_1,\ldots,\clk_n$ with $\clk= \clk_1 \oplus \cdots \oplus \clk_n$ and a unitary operator 
\[ U = \begin{bmatrix}
A  &   B   \cr  C  &  D  
\end{bmatrix}: \cle_1 \oplus \clk \to  \cle_2 \oplus \clk, \]
such that $\phi(\bm z)= A + C(I - E(\z)D)^{-1}E(\z)B$ for all $\bm z\in \D^n$, where $E: \D^n \to \clb(\clk,\clk)$ is defined by $E(\bm z)(\oplus_{i=1}^n h_i) = \oplus_{i=1}^n z_ih_i$ for $\bm z=(z_1,\ldots,z_n) \in \D^n$.
\end{Theorem}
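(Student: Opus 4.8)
The linchpin of the proof is the \emph{Agler decomposition}: $\phi \in \cls\cla(\D^n,\clb(\cle_1,\cle_2))$ if and only if there are positive definite $\clb(\cle_2)$-valued kernels $G_1,\dots,G_n$ on $\D^n$ with
\[
I_{\cle_2}-\phi(\z)\phi(\w)^*=\sum_{i=1}^n(1-z_i\bar w_i)\,G_i(\z,\w)\qquad(\z,\w\in\D^n).
\]
I would first dispose of the (easier) direction \emph{realization} $\Rar$ \emph{membership}. Given the unitary $U$, set $F(\z)=C(I-E(\z)D)^{-1}$ and verify, purely from the four block identities encoded in $U^*U=UU^*=I$, the colligation identity
\[
I_{\cle_2}-\phi(\z)\phi(\w)^*=F(\z)\big(I-E(\z)E(\w)^*\big)F(\w)^*.
\]
Since $E(\z)E(\w)^*=\sum_{i=1}^n z_i\bar w_i\,P_i$, with $P_i$ the orthogonal projection of $\clk$ onto $\clk_i$, this reads off as the Agler decomposition with $G_i(\z,\w)=F(\z)P_iF(\w)^*\ge 0$. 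To conclude $\|\phi(T)\|\le1$ for a commuting tuple $T=(T_1,\dots,T_n)$ of strict contractions on $\clh$, I would substitute $\z\mapsto T$, $\w\mapsto T$ in the \emph{hereditary} sense (all $T$'s to the left of all $T^*$'s): with $\widetilde E_T=\sum_{i=1}^n T_i\otimes P_i$ on $\clh\otimes\clk$ one has $\|\widetilde E_T\|=\max_i\|T_i\|<1$, so the resolvent converges and the identity persists, giving $I-\phi(T)\phi(T)^*=\widetilde F_T\big(\sum_{i}(I-T_iT_i^*)\otimes P_i\big)\widetilde F_T^*\ge0$.

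The substance is the converse, \emph{membership} $\Rar$ \emph{Agler decomposition}. Here I would run the cone-separation argument. On each finite set $\Lambda\subset\D^n$ consider the real vector space of Hermitian $\clb(\cle_2)$-valued kernels on $\Lambda$ and the convex cone $\clc$ generated by the kernels $(\z,\w)\mapsto\sum_i(1-z_i\bar w_i)G_i(\z,\w)$ with each $G_i\ge0$. If the kernel $I-\phi\phi^*$ failed to lie in $\overline{\clc}$, Hahn--Banach would produce a separating functional; realizing it through a Riesz/GNS construction packages the dual datum into a commuting tuple of strict contractions $T$ on a finite-dimensional space with $\|\phi(T)\|>1$, contradicting $\phi\in\cls\cla(\D^n,\clb(\cle_1,\cle_2))$. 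Thus $I-\phi\phi^*\in\overline{\clc}$ on every finite set, and a normal-families/weak-$*$ compactness argument patches the local data into global kernels $G_1,\dots,G_n$.

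Given the decomposition I would build the realization via a \emph{lurking isometry}. Factor $G_i(\z,\w)=H_i(\z)H_i(\w)^*$ by Kolmogorov with $H_i(\z)\colon\clk_i\to\cle_2$, put $\clk=\bigoplus_i\clk_i$, $H(\z)=[\,H_1(\z)\ \cdots\ H_n(\z)\,]$, and rewrite the decomposition as the Gram identity
\[
\phi(\z)\phi(\w)^*+H(\z)H(\w)^*=I_{\cle_2}+H(\z)E(\z)E(\w)^*H(\w)^*.
\]
This says that the assignment sending $\big(\phi(\w)^*x,\,H(\w)^*x\big)$ to $\big(x,\,E(\w)^*H(\w)^*x\big)$ (for $x\in\cle_2$, $\w\in\D^n$) preserves inner products, hence extends to an isometry $V$ between subspaces of $\cle_1\oplus\clk$ and $\cle_2\oplus\clk$; enlarging $\clk$ by an infinite-dimensional summand to balance the two defect spaces, $V$ extends to a unitary $U$, whose block entries are the desired $A,B,C,D$. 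A back-substitution, literally reversing the colligation computation of the first paragraph, then confirms $\phi(\z)=A+C(I-E(\z)D)^{-1}E(\z)B$.

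The main obstacle is the middle step: extracting the Agler decomposition from the bare functional-calculus hypothesis. This is exactly where the geometry of $\D^n$ enters---there is no von Neumann inequality over the polydisc for $n\ge3$, so one cannot argue naively---and the cone-separation/duality argument (equivalently, a compactness argument over the universal algebra generated by commuting contractions) is the delicate part. By comparison the realization-to-membership direction is a finite algebraic identity, while the lurking-isometry step is routine provided one is careful to pad $\clk$ so that $V$ genuinely extends to a \emph{unitary} rather than merely a contraction.
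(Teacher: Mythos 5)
This theorem is not proved in the paper at all: it is quoted verbatim from Agler's work (the label ``cf.~\cite{Ag}'') and used as a black box, so there is no in-paper proof to compare against. Your outline is, in essence, the classical argument from that source (and from Ball--Trent): the easy direction via the colligation identity $I-\phi(\z)\phi(\w)^*=F(\z)(I-E(\z)E(\w)^*)F(\w)^*$ followed by hereditary substitution of a strict row of commuting contractions; the hard direction via cone separation to produce the Agler decomposition; and the lurking-isometry construction of $U$. All three steps are correctly identified and correctly ordered, and the Gram identity you write down is the right one (the isometry goes from $\{(\phi(\w)^*x,H(\w)^*x)\}\subseteq\cle_1\oplus\clk$ onto $\{(x,E(\w)^*H(\w)^*x)\}\subseteq\cle_2\oplus\clk$, i.e.\ the reverse of the direction you named, but that only affects whether one realizes $\phi$ by $U$ or by $U^*$). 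The two places where your sketch elides genuine work are (i) upgrading ``$I-\phi\phi^*\in\overline{\clc}$'' to actual membership in $\clc$ on each finite set --- one must prove the cone is closed, which requires an a priori bound such as $G_i(\lambda,\lambda)\le(1-|\lambda_i|^2)^{-1}\|\sum_j(1-|\lambda_j|^2)G_j(\lambda,\lambda)\|$ to get the needed compactness of approximating decompositions; and (ii) the passage from decompositions on finite sets to global kernels, which needs a Kurosh/inverse-limit or weak-$*$ compactness argument. You flag both, so I would count them as acknowledged technical debts rather than gaps. It is worth noting that the cone-plus-GNS mechanism you describe is exactly the one the paper \emph{does} write out in detail in the proof of its Theorem~\ref{Main Theorem in D^n} (following \cite{BLTT}), so your reconstruction is consistent with the toolkit the authors deploy elsewhere.
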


\newsection{Commutant lifting for pure $m$-hypercontractions}\label{Commutant lifting in B^n}
Using the well-known models of pure $m$-hypercontractions ~\cite{MV}, we 
first consider intertwining lifting for pure $m$-hypercontractions. We begin with 
pure $m$-hypercontractions $T=(T_1,\dots,T_n)$ and $S=(S_1,\dots,S_n)$ on 
$\clh$ and $\clk$ respectively. Let $\pi_T:\clh\to \bH_m(\B^n,\cld_{m,T^*})$
and $\pi_S:\clk\to \bH_m(\B^n,\cld_{m,S^*})$ be the canonical dilation 
map of $T$ and $S$ respectively, as in ~\eqref{dilation map for m-hyper in ball set up}. If 
$X\in \clb(\clh,\clk)$ is a contraction such that 
\[
XT_i = S_iX
\]
for all  $i=1,\ldots,n$, then our aim is to find a necessary and sufficient condition on $X$ such that there exists a 
Schur-Agler function $\Phi:\B^n\to \clb(\cld_{m,T^*},\cld_{m,S^*})$ satisfying 
\[
\pi_TX^*=M_{\Phi}^*\pi_S.
\]
Now we assume that $X$ satisfies the positivity 
\begin{equation} \label{NASC-ball set up} 
(1 - \sigma_S)^{m-1}( I_{\clk} - XX^*) \geq 0,
\end{equation}
where $\sigma_S$ is as in ~\eqref{CP}.
In such a case, we set $\Delta_{S,X}^2: =
(1 - \sigma_S)^{m-1}(I_{\clk} - XX^*)$. Needles to say that for $m=1$, that is for row contractions $T$ and $S$, the above positivity assumption is automatic.
Using ~\eqref{NASC-ball set up} and the intertwining property of $X$,
we have the following identity
\begin{align*}
( 1 - \sigma_S)^m(I_{\clk}) - X(1 - \sigma_T)^m(I_{\clh})X^* 
&= (1 - \sigma_S)^m(I_{\clk} - XX^*) \\
&= (1 - \sigma_S)(1 - \sigma_S)^{m-1}(I_{\clk} - XX^*) \\
&= (1 - \sigma_S)^{m-1}(I_{\clk} - XX^*) - \sigma_S(1 - \sigma_S)^{m-1}(I_{\clk} - XX^*).
\end{align*}
This in particular shows that
\begin{equation}\label{identity-ball set up}
D_{m,S^*}^2 + \sigma_S(\Delta_{S,X}^2) = \Delta_{S,X}^2 +
XD_{m,T^*}^2X^*. 
\end{equation}
Then by adding an infinite dimensional Hilbert space $\cle$, if necessary,
 and by setting 
\[
\clr:=\overline{\text{ran}} \Delta_{S,X}\oplus\cle
\]
 we construct a unitary  
\begin{equation}\label{Unitary in Ball set up} U:= \begin{bmatrix}
A & B \cr
C & D \cr
\end{bmatrix} : \cld_{m,S^*} \oplus \clr^n \to 
 \cld_{m,T^*} \oplus \clr
\end{equation} 
such that 
\[ U(D_{m,S^*}k, (\Delta_{S,X}S_1^*k,0_{\cle}), \ldots, (\Delta_{S,X}S_n^*k, 0_{\cle})) = (D_{m,T^*}X^*k, (\Delta_{S,X}k,0_{\cle})) \quad (k\in \clk) \]
where $B=(B_1,\ldots,B_n): \clr^n\to \cld_{m,T^*}$ is a contraction  and $D=(D_1,\ldots,D_n): \clr^n\to \clr$ is a row contraction.    
Before proceeding further we fix some notations. For an $n$-tuple 
of contractions $T=(T_1,\dots,T_n)$ on $\clh$ 
and an ordered element $F=(f_1,\dots,f_r) \in\{1,\dots,n\}^r$ $(r\in\N)$,  we set
\[T_F:=T_{f_1}T_{f_2}\cdots T_{f_r}.\]
If $T$ is a row contraction, then
the $n^r$-tuple  $\{T_{F}: F\in \{1,\dots,n\}^r\}$
is also a row contraction. 
This follows from the 
identity that 
\[
 \sigma_{T^{\bullet r}}(I_{\clh})=\sigma_T^r(I_{\clh}),
\]
where $T^{\bullet r}=\{T_{F}: F\in \{1,\dots,n\}^r\}$.
Now we prove a lemma which is crucial to prove the intertwining lifting 
theorem and also to obtain explicit description of the lifting. 
\begin{Lemma} \label{operator identity in ball set up}
Let $T=(T_1,\ldots,T_n)$ and $S=(S_1,\dots,S_n)$ be pure $m$-hypercontractions on $\clh$ and $\clk$ respectively. Suppose that
$X\in\clb(\clh,\clk)$ be a contraction such that 
 $XT_i = S_iX$ for all $i=1,\ldots,n$ and   
\[(1 - \sigma_S)^{m-1}( I - XX^*) \geq 0.\]
If $ U = \begin{bmatrix}
A & B \cr C & D   
\end{bmatrix}: \cld_{m,S^*} \oplus \clr^n \to 
 \cld_{m,T^*} \oplus \clr$ is the unitary as in ~\eqref{Unitary in Ball set up}, then we have the following operator identity 
\[ 
D_{m,T^*}X^* = AD_{m,S^*} + \sum_{j=1}^n\sum_{i=0}^{\infty}\sum_{F\in \{1,\dots,n\}^i} B_jD_{F}C D_{m,S^*}S_F^{*}S_j^* ,
\] 
where the sum converges in the strong operator topology and we follow the convention that $\{1,\dots,n\}^0=\emptyset$, $D_{\emptyset}=I_{\clr}$ and $S_{\emptyset}=I_{\clk}$.  
\end{Lemma}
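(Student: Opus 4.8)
The plan is to read off the identity from the single vector relation that defines $U$, by expanding the two output components separately and then eliminating the ``hidden'' vectors $(\Delta_{S,X}S_j^*k,0_\cle)$ through an iteration. First I would fix $k\in\clk$ and apply $U$ to $(D_{m,S^*}k,(\Delta_{S,X}S_1^*k,0_\cle),\dots,(\Delta_{S,X}S_n^*k,0_\cle))$ using the block form $U=\left[\begin{smallmatrix}A&B\\C&D\end{smallmatrix}\right]$ together with $B=(B_1,\dots,B_n)$ and $D=(D_1,\dots,D_n)$. Comparing with the prescribed output $(D_{m,T^*}X^*k,(\Delta_{S,X}k,0_\cle))$ and splitting into the $\cld_{m,T^*}$-component and the $\clr$-component yields
\[
D_{m,T^*}X^*k=AD_{m,S^*}k+\sum_{j=1}^nB_j(\Delta_{S,X}S_j^*k,0_\cle), \qquad (\Delta_{S,X}k,0_\cle)=CD_{m,S^*}k+\sum_{j=1}^nD_j(\Delta_{S,X}S_j^*k,0_\cle).
\]
Writing $r(l):=(\Delta_{S,X}l,0_\cle)\in\clr$ for $l\in\clk$, the second relation is the recursion $r(l)=CD_{m,S^*}l+\sum_{j=1}^nD_jr(S_j^*l)$, while the first reads $D_{m,T^*}X^*k=AD_{m,S^*}k+\sum_{j=1}^nB_jr(S_j^*k)$. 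Thus the whole statement reduces to solving the recursion for $r$.

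Next I would iterate the recursion. Substituting it into itself $N$ times and tracking orderings---a factor $D_j$ occurring before $D_{j'}$ corresponds to $D_F$ with $F=(j,j',\dots)$, and then $r$ is applied to $S_{j'}^*S_j^*l=S_F^*l$, consistently with $D_F=D_{f_1}\cdots D_{f_i}$ and $S_F^*=S_{f_i}^*\cdots S_{f_1}^*$---gives the finite expansion
\[
r(l)=\sum_{i=0}^{N-1}\sum_{F\in\{1,\dots,n\}^i}D_FCD_{m,S^*}S_F^*l+\sum_{F\in\{1,\dots,n\}^N}D_Fr(S_F^*l),
\]
with the convention $D_\emptyset=I_\clr$, $S_\emptyset=I_\clk$ for the $i=0$ term. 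It then remains to show the remainder tends to $0$.

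The main obstacle---and the crux of the argument---is this remainder estimate. Since $D=(D_1,\dots,D_n)$ is a row contraction, the $n^N$-tuple $\{D_F:F\in\{1,\dots,n\}^N\}$ is again a row contraction (by the identity $\sigma_{D^{\bullet N}}=\sigma_D^N$ recorded above), so
\[
\Big\|\sum_{F\in\{1,\dots,n\}^N}D_Fr(S_F^*l)\Big\|^2\le\sum_{F\in\{1,\dots,n\}^N}\|r(S_F^*l)\|^2=\sum_{F}\langle\Delta_{S,X}^2S_F^*l,S_F^*l\rangle=\langle\sigma_S^N(\Delta_{S,X}^2)l,l\rangle,
\]
where the last step uses $\|r(\cdot)\|^2=\|\Delta_{S,X}\cdot\|^2$ together with $\sum_{F\in\{1,\dots,n\}^N}S_F\Delta_{S,X}^2S_F^*=\sigma_{S^{\bullet N}}(\Delta_{S,X}^2)=\sigma_S^N(\Delta_{S,X}^2)$. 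Because $S$ is a pure $m$-hypercontraction, $\sigma_S^N(I_\clk)\to0$ strongly; combined with $0\le\Delta_{S,X}^2\le\|\Delta_{S,X}^2\|\,I_\clk$ and the positivity (hence monotonicity) of $\sigma_S^N$, this gives $0\le\sigma_S^N(\Delta_{S,X}^2)\le\|\Delta_{S,X}^2\|\,\sigma_S^N(I_\clk)\to0$ strongly, so $\langle\sigma_S^N(\Delta_{S,X}^2)l,l\rangle\to0$ for each $l$. Hence the remainder vanishes and $r(l)=\sum_{i=0}^\infty\sum_{F\in\{1,\dots,n\}^i}D_FCD_{m,S^*}S_F^*l$, with strong-operator convergence. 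Substituting $l=S_j^*k$ into $D_{m,T^*}X^*k=AD_{m,S^*}k+\sum_jB_jr(S_j^*k)$ then produces exactly the asserted identity, the SOT convergence being inherited from that of the series for $r$.
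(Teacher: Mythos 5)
Your proposal is correct and follows essentially the same route as the paper: extract the two component identities from the defining relation of $U$, iterate the $\clr$-component recursion, and kill the remainder using that $\{D_F: F\in\{1,\dots,n\}^N\}$ is a row contraction together with $\sigma_S^N(I_\clk)\to 0$ strongly (purity of $S$). The only cosmetic difference is in the remainder estimate, where you bound $\sigma_S^N(\Delta_{S,X}^2)\le\|\Delta_{S,X}^2\|\,\sigma_S^N(I_\clk)$ instead of invoking contractivity of $\Delta_{S,X}$ as the paper does; both yield the same conclusion.
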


\textit{Proof.} For each $k\in \clk$, since 
\[
 U(D_{m,S^*}k, (\Delta_{S,X}S_1^*k,0_{\cle}), \ldots, (\Delta_{S,X}S_n^*k,  0_{\cle})) = (D_{m,T^*}X^*k, (\Delta_{S,X}k,0_{\cle})),
\]
we have the following two identities 
\begin{equation} \label{id1-ball set up}
D_{m,T^*}X^*k = AD_{m,S^*}k + 
\sum_{j=1}^n B_j (\Delta_{S,X}S_j^*k, 0_{\cle})  
\end{equation} and
\begin{equation} \label{id2-ball set up}
(\Delta_{S,X}k, 0_{\cle}) = CD_{m,S^*}h + 
\sum_{i=1}^n D_i(\Delta_{S,X}S_i^*k,0_{\cle}) .
\end{equation}
Now we solve the above equations for $D_{m,T^*}X^*k$ through an iterative process. In the first step we replace $k$ by $S_j^*k$ $(1\le j\le n)$ in (\ref{id2-ball set up}), to get 
\[ 
(\Delta_{S,X}S_j^*k, 0_{\cle}) = CD_{m,S^*}S_j^*k + 
\sum_{i=1}^n D_i(\Delta_{S,X}S_i^*S_j^*k, 0_{\cle})\ (1\le j\le n). \]
Then by replacing it in (\ref{id1-ball set up}), we have
\[ 
D_{m,T^*}X^*k = AD_{m,S^*}k +  \sum_{j=1}^n B_jCD_{m,S^*}S_j^*k + \sum_{j=1}^n\sum_{i=1}^n B_jD_i(\Delta_{S,X}S_i^*S_j^*k, 0_{\cle})  . \]
Repeating this $r$ times we get
\begin{align}\label{key}
D_{m,T^*}X^*k = AD_{m,S^*}k + & \sum_{j=1}^n\sum_{i=0}^{r-1}\sum_{F\in \{1,\dots,n\}^i} B_jD_FCD_{m,S^*}S_F^*S_j^*k \\ \nonumber
& \qquad\qquad \qquad+\sum_{j=1}^n\sum_{F\in \{1,\dots,n\}^r} B_jD_F(\Delta_{S,X}S_F^*S_j^*k, 0_{\cle})
\end{align}
Now using the facts that $B=(B_1,\dots, B_n): \clr^n\to \cld_{m,T^*}$
is a contraction, the $n^r$-tuple $(D_{F}: F\in\{1,\dots,n\}^r): \clr^{n^r}\to\clr$ is a row contraction and $\Delta_{S,X}$ is a contraction, we have 
\begin{align*}
\|\sum_{j=1}^n \sum_{F\in\{1,\dots,n\}^r} B_j D_F(\Delta_{S,X}S_F^{*}S_j^*k,0_{\cle}) \|^2 
&\le \sum_{j=1}^n \| \sum_{F\in\{1,\dots,n\}^r} D_F( \Delta_{S,X}S_F^{*} S_j^*k, 0_{\cle}) \|^2 \\
&\leq \sum_{j=1}^n\sum_{F\in\{1,\dots,n\}^r}  \|S_F^{*}S_j^*k \|^2 \\
&= \sum_{j=1}^n\sum_{F\in\{1,\dots,n\}^r}  \langle S_jS_FS_F^{*}S_j^*k, k \rangle \\
&= \langle \sigma_S^{r+1}(I_{\clk})k, k \rangle.  
\end{align*}
Finally since $S$ is a pure $m$-hypercontraction,
that is $\sigma_S^{r+1}(I_{\clk})\to 0$ in the strong operator topology, 
\[
 \sum_{j=1}^n\sum_{F\in \{1,\dots,n\}^r} B_jD_F(\Delta_{S,X}S_F^*S_j^*k, 0_{\cle})\to 0\ \text{ as } r\to\infty
\]
for all $k\in \clk$. The proof now follows from ~\eqref{key}. 
\qed

\begin{Remark}
A particular case of the above lemma, that is, for $m=1$ and $n=1$ is obtained in \cite[Lemma 2.1]{DS}. 
\end{Remark}

Now, we are ready to prove the intertwining lifting theorem for pure $m$-hypercontractions.
\begin{Theorem}
Let $T=(T_1,\dots,T_n)$ and $S=(S_1,\dots,S_n)$ be pure $m$-hypercontractions on $\clh$ and $\clk$ respectively, and let 
$X\in \clb(\clh,\clk)$ be a contraction such that $XT_i=S_iX$ for all $i=1,\dots,n$. 
If $\pi_T:\clh\to \mathbb H_m(\B^n, \cld_{m,T^*})$ and 
$\pi_S:\clk\to \bH_m(\B^n,\cld_{m,S^*})$ are the canonical dilation 
map of $T$ and $S$ respectively, then there exists a Schur-Agler function $\Phi:\B^n\to \clb(\cld_{m,T^*},\cld_{m,S^*})$ such that 
$\pi_TX^*=M_{\Phi}^*\pi_S$
if and only if 
\[
(1-\sigma_S)^{m-1}(I-XX^*) \geq 0.
\]
Moreover, in the case when  $(1-\sigma_S)^{m-1}(I-XX^*) \geq 0$,  the Schur-Agler function  $\Phi$ can be taken as the transfer function of the unitary $U^*$ 
as in ~\eqref{Unitary in Ball set up}, that is 
\[
\Phi(\z)=A^*+C^*(I-ZD^*)^{-1}ZB^*\quad (\z\in\B^n)
\]
where $Z=(z_1I_{\clr},\ldots,z_nI_{\clr})$ is a row operator corresponding to each $\bm z \in \B^n$.
\end{Theorem}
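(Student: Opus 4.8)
The statement combines an ``if and only if'' with an explicit formula, so the plan has three parts: produce the lift from the positivity, verify that it intertwines, and prove necessity. Write $\cle_1=\cld_{m,T^*}$ and $\cle_2=\cld_{m,S^*}$. For the sufficiency I would assume the positivity; then $\Delta_{S,X}$ and the unitary $U$ of~\eqref{Unitary in Ball set up} are defined, and I set $\Phi$ to be the transfer function of $U^{*}$. Since $U^{*}\colon\cle_1\oplus\clr\to\cle_2\oplus\clr^{n}$ is again unitary, Theorem~\ref{char Schur-Agler class in ball} applies verbatim and gives $\Phi\in\clm_1(H^2_n(\cle_1),H^2_n(\cle_2))$, i.e. $\Phi$ is Schur-Agler; by Proposition~\ref{inclusion} (and trivially when $m=1$) the operator $M_{\Phi}\colon\bH_m(\B^n,\cle_1)\to\bH_m(\B^n,\cle_2)$ is then a contraction, so $M_{\Phi}^{*}\pi_S$ is meaningful.

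It remains to check $\pi_TX^{*}=M_{\Phi}^{*}\pi_S$, equivalently $X\pi_T^{*}=\pi_S^{*}M_{\Phi}$. From~\eqref{dilation map for m-hyper in ball set up} one reads off $\pi_T^{*}\big(K_m(\cdot,\w)\eta\big)=\big(I-\sum_i\bar{w}_iT_i\big)^{-m}D_{m,T^*}\eta$ and $\pi_S^{*}g=\sum_{\bk}S^{\bk}D_{m,S^*}g_{\bk}$ for $g=\sum_{\bk}g_{\bk}\z^{\bk}$. Applying both operators to a kernel vector $K_m(\cdot,\w)\eta$ and using $XT^{\bk}=S^{\bk}X$, each side equals $\big(I-\sum_i\bar{w}_iS_i\big)^{-m}$ times a $\w$-independent vector; hence (take $\w=0$) the identity reduces to $XD_{m,T^*}\eta=\pi_S^{*}\big(\Phi(\cdot)\eta\big)$ for $\eta\in\cle_1$. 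Expanding $\Phi(\z)=A^{*}+C^{*}(I-ZD^{*})^{-1}ZB^{*}$ as a Neumann series in $\z$ and applying the formula for $\pi_S^{*}$, one finds $\pi_S^{*}(\Phi(\cdot)\eta)=D_{m,S^*}A^{*}\eta+\sum_{i\ge0}\sum_{F\in\{1,\dots,n\}^{i}}\sum_{j=1}^{n}S_FS_j\,D_{m,S^*}C^{*}D_F^{*}B_j^{*}\eta$, the product $S_FS_j$ arising because $\pi_S^{*}$ attaches the (commuting) monomial $S^{\bk}$ to the $\z^{\bk}$-coefficient. Taking adjoints in Lemma~\ref{operator identity in ball set up} gives exactly $XD_{m,T^*}=D_{m,S^*}A^{*}+\sum_{j}\sum_{i}\sum_{F}S_jS_F\,D_{m,S^*}C^{*}D_F^{*}B_j^{*}$, and $S_FS_j=S_jS_F$ matches the two expressions term by term. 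This proves the intertwining and simultaneously the claimed explicit form of $\Phi$.

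For necessity, suppose some Schur-Agler $\Phi$ satisfies $\pi_TX^{*}=M_{\Phi}^{*}\pi_S$. As $\pi_T$ is isometric, $XX^{*}=\pi_S^{*}M_{\Phi}M_{\Phi}^{*}\pi_S$, so $I-XX^{*}=\pi_S^{*}(I-M_{\Phi}M_{\Phi}^{*})\pi_S$. The dilation relations $M_{z_i}^{*}\pi_S=\pi_SS_i^{*}$ and $\pi_S^{*}M_{z_i}=S_i\pi_S^{*}$ yield the transfer rule $\sigma_S(\pi_S^{*}Y\pi_S)=\pi_S^{*}\sigma_M(Y)\pi_S$ for every $Y\in\clb(\bH_m(\B^n,\cle_2))$, where $\sigma_M(Y)=\sum_iM_{z_i}YM_{z_i}^{*}$; iterating, $(1-\sigma_S)^{m-1}(I-XX^{*})=\pi_S^{*}\,(1-\sigma_M)^{m-1}(I-M_{\Phi}M_{\Phi}^{*})\,\pi_S$. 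Thus it suffices to prove $(1-\sigma_M)^{m-1}(I-M_{\Phi}M_{\Phi}^{*})\ge0$ on $\bH_m(\B^n,\cle_2)$. Here I would use a reproducing-kernel symbol calculus: a direct computation on kernel vectors gives $\langle\sigma_M(Y)K_m(\cdot,\w)\zeta,K_m(\cdot,\z)\xi\rangle=\langle\z,\w\rangle\langle YK_m(\cdot,\w)\zeta,K_m(\cdot,\z)\xi\rangle$, so $\sigma_M$ multiplies the symbol by $\langle\z,\w\rangle$. The symbol of $I-M_{\Phi}M_{\Phi}^{*}$ is $(1-\langle\z,\w\rangle)^{-m}(I-\Phi(\z)\Phi(\w)^{*})$, so after $(1-\sigma_M)^{m-1}$ it becomes the Agler kernel $(I-\Phi(\z)\Phi(\w)^{*})/(1-\langle\z,\w\rangle)$, which is positive definite (write it as $H(\z)H(\w)^{*}$) exactly because $\Phi$ is Schur-Agler. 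Testing against $v=\sum_l K_m(\cdot,\w_l)\zeta_l$ turns the quadratic form into $\|\sum_lH(\w_l)^{*}\zeta_l\|^{2}\ge0$, and density of kernel vectors gives the positivity, whence compressing by $\pi_S$ finishes.

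The crux is this last positivity. The Schur-Agler hypothesis is used in an essential way: contractivity of $M_{\Phi}$ as a mere $\bH_m$-multiplier would only give $I-M_{\Phi}M_{\Phi}^{*}\ge0$ (the case $m=1$), and for $m\ge2$ the operator $(1-\sigma_M)^{m-1}(I-M_{\Phi}M_{\Phi}^{*})$ is not positive term by term, so its positivity is invisible without the symbol calculus together with the Agler factorization. The remaining care is routine: the strong-operator convergence of the Neumann series (handled as in the proof of Lemma~\ref{operator identity in ball set up}) and the reordering justified by commutativity of the $S_i$.
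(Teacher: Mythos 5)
Your proposal is correct and follows essentially the same route as the paper's proof: the lift is the transfer function of $U^*$, the intertwining is verified by matching the expansion of $\Phi$ against (the adjoint of) Lemma~\ref{operator identity in ball set up}, and necessity comes from the positivity of the Agler kernel $(I-\Phi(\z)\Phi(\w)^*)/(1-\langle \z,\w\rangle)$ tested on kernel vectors together with the compression identity $(1-\sigma_S)^{m-1}(I-XX^*)=\pi_S^*(1-\sigma_{M_z})^{m-1}(I-M_\Phi M_\Phi^*)\pi_S$. The only differences are cosmetic: you verify the dual identity $X\pi_T^*=\pi_S^*M_\Phi$ on kernel vectors (reducing to $\w=0$), whereas the paper tests $\pi_TX^*=M_\Phi^*\pi_S$ against monomials $\z^{\delta}\eta$.
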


\textit{Proof.}  
First we assume that $(1-\sigma_S)^{m-1}(I-XX^*) \geq 0$. 
Let $\Phi$ be the transfer function of the unitary $U^*$ as in 
~\eqref{Unitary in Ball set up}, that is 
\[ \Phi(\bm z):= A^* + C^*(I-ZD^*)^{-1}ZB^* \quad (\z\in\B^n).\]
Being a transfer function of a unitary, by Theorem \ref{char Schur-Agler class in ball}, $\Phi: \B^n\to \clb(\cld_{m,T^*},\cld_{m,S^*})$ is a Schur-Agler function. 
Now we show that $M_{\Phi}$ is a lifting of $X$. To this end, let 
$k\in \clk, \delta=(\delta_1,\ldots,\delta_n)\in \mathbb Z_{+}^n$ and $\eta \in \cld_{m,T^*}$. Then in one hand,
 \begin{align*}
\langle M_{\Phi}^* \pi_Sk, {\z}^{\delta}\eta \rangle 
&= \langle \pi_Sk, M_{\Phi} {\z}^{\delta}\eta \rangle \\
&= \big\langle \sum_{\bm k\in\Z_{+}^n} \rho_m(\bm k)(D_{m,S^*}S^{*\bm k}k) {\z}^{\bm k}, 
\big(A^*+ \sum_{r=0}^{\infty} \sum_{j=1}^n C^*(\sum_{i=1}^n z_iD_i^*)^rB_j^*z_j
\big) {\z}^{\delta}\eta \big\rangle \\
&= \langle D_{m,S^*}S^{*\delta}k, A^*\eta \rangle + 
\sum_{r= 0}^{\infty} \sum_{j=1}^n\sum_{F\in\{1,\dots,n\}^r} \langle D_{m,S^*}S_F^{*}S^{*  \delta}S_j^*k, C^*D_F^{*}B_j^* \eta \rangle \\
&= \langle AD_{m,S^*}S^{*\delta}k, \eta \rangle + 
\sum_{r= 0}^{\infty} \sum_{j=1}^n\sum_{F\in \{1,\dots,n\}^r}
 \langle B_jD_FC D_{m,S^*} S_F^{*} S_j^*S^{*\delta}k, \eta \rangle \\
&= \langle \big( AD_{m,S^*} + \sum_{r=0}^{\infty} \sum_{j=1}^n
\sum_{F\in\{1,\dots,n\}^r} B_jD_FC D_{m,S^*} S_F^{*}S_j^* \big) S^{*\delta}k, \eta \rangle . 
\end{align*}
 On the other hand, by using the intertwining property of
 $X$, we have 
\begin{align*}
\langle \pi_T X^*k, {\z}^{\delta}\eta \rangle 
&= \langle \sum_{\bm k\in\Z^n_{+}} \rho_m(\bm k) (D_{m,T^*}T^{*\bm k}X^*k) \z^{\bm k}, \z^{\delta}\eta \rangle \\
&= \langle D_{m,T^*}T^{*\delta}X^*k, \eta \rangle \\
&= \langle D_{m,T^*}X^*S^{*\delta}k, \eta \rangle.
\end{align*} 
Thus by Lemma~\ref{operator identity in ball set up}, we have 
\[
 \langle \pi_T X^*k, \z^{\delta}\eta \rangle = \langle M_{\Phi}^* \pi_Sk, \z^{\delta}\eta \rangle,
\]
and therefore $\pi_TX^* = M_{\Phi}^* \pi_S$. This proves one direction of the 
theorem as well as the last part of the theorem.

For the converse part, suppose $\Phi:\B^n\to \clb(\cld_{m,T^*},\cld_{m,S^*})$ is a Schur-Multiplier such that $\pi_TX^*=M_{\Phi}^*\pi_S$. First we claim that 
\[
(1-\sigma_{M_z})^{m-1}(I-M_{\Phi}M_{\Phi}^*) \geq 0,
\]
where $M_z=(M_{z_1},\dots, M_{z_n})$ is the $n$-tuple of shifts on $\mathbb H_m(\B^n,\cld_{m,S^*})$. Indeed, for $\bm{w}_1,\ldots,\bm{w}_r \in \B^n$ and $\eta_1,\ldots,\eta_r \in \cld_{m,S^*}$ we note that 
\[
\langle (1-\sigma_{M_z})^{m-1}(I) \sum_{p=1}^r K_m(.,\bm{w}_p)\eta_p,
 \sum_{q=1}^r K_m(.,\bm{w}_q)\eta_q \rangle= \sum_{p,q=1}^r
\frac{\langle \eta_q, \eta_p \rangle}{1- \langle \bm{w}_p, \bm{w}_q \rangle}.
\]
The above identity together with  $M_{\Phi}M_z = M_zM_{\Phi}$ yields
\begin{align*}
&\langle (1-\sigma_{M_z})^{m-1}(I-M_{\Phi}M_{\Phi}^*) \sum_{p=1}^r K_m(.,\bm{w}_p)\eta_p, \sum_{q=1}^r K_m(.,\bm{w}_q)\eta_q \rangle \\ 
&=  \sum_{p,q=1}^r \frac{\langle (I_{\cld_{m,S^*}}- \Phi(\bm{w}_p)\Phi(\bm{w}_q)^*) \eta_q, \eta_p \big\rangle}{1- \langle \bm{w}_p, \bm{w}_q \rangle}.  
\end{align*} 
Now being a Schur-Agler function, $\Phi\in \clm_1(H^2_n(\cld_{m,T^*}), H^2_n(\cld_{m,S^*}) )$ and this is equivalent to the positive definiteness of the 
function
\[
(\bm z, \bm w)\mapsto \frac{I_{\cld_{m,S^*}}- \Phi(\bm z)\Phi(\bm w)^*}{1- \langle \bm z, \bm w \rangle}\quad (\bm z,\bm w\in\B^n).
\]
This in turn implies that 
 $(1-\sigma_{M_z})^{m-1}(I-M_{\Phi}M_{\Phi}^*) \geq 0$. 
 The proof now follows from the identity that
 \[
(1-\sigma_S)^{m-1}(I-XX^*)=
\pi_S^* (1-\sigma_{M_z})^{m-1}(I-M_{\Phi}M_{\Phi}^*)\pi_S.
 \]
 \qed 
 
\begin{Remark}
The necessary and sufficient condition obtained in the above theorem 
can be reformulated as 
\[
D_{m-1,S^*}-XD_{m-1,S^*}X^*\ge 0,
\]
where $S$ is wiewed as an $(m-1)$-hypercontraction and $D_{m-1,S^*}$ is the corresponding defect operator of $S$. The particular case $n=1$ of the above commutant lifting theorem is observed in ~\cite{BD} and played the key role for describing class of factors of hypercontractions.

\end{Remark}

As an immediate consequence of this result we have the following 
theorem.

\begin{Theorem}\label{CLT for m-hyper in ball set up}
Let $\cle_1$ and $\cle_2$ be two Hilbert spaces, and 
let $\clq_i$ be a co-invariant subspace of $\bH_m(\B^n,\cle_i)$
for all $i=1,2$. If $X\in \clb(\clq_1,\clq_2)$ is a contraction
such that 
\[
 X(P_{\clq_1}M_{z_i}|_{\clq_1})=(P_{\clq_2}M_{z_i}|_{\clq_2})X,
 \quad (1\le i\le n)
\]
then there exists a Schur-Agler function
$\Phi:\B^n\to \clb(\cle_1,\cle_2)$ such that
$X^*=M_{\Phi}^*|_{\clq_2}$ if and only if 
 \[
 (1-\sigma_S)^{m-1}(I-XX^*) \geq 0, 
 \]
where $S_i=P_{\clq_2}M_{z_i}|_{\clq_2}$ for all $i=1,\dots,n$
and $S=(S_1,\dots,S_n)$.

In such a case, the multiplier $\Phi$ can be made explicit by taking 
transfer function of the corresponding unitary $U^*$ as in
~\eqref{Unitary in Ball set up}.
\end{Theorem}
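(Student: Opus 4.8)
The plan is to read the result off from the intertwining lifting theorem for pure $m$-hypercontractions proved just above, applied to the compressed shifts, and then to transport the resulting Schur-Agler function from the defect spaces back to the coefficient spaces $\cle_1,\cle_2$ by a pair of isometries.

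First I would put $T_i=P_{\clq_1}M_{z_i}|_{\clq_1}$ and $S_i=P_{\clq_2}M_{z_i}|_{\clq_2}$ and verify that $T=(T_1,\dots,T_n)$ and $S=(S_1,\dots,S_n)$ are pure $m$-hypercontractions. Because $\clq_i$ is co-invariant, $\clq_i^{\perp}$ is $M_{z_j}$-invariant, so $P_{\clq_i}M_{z_j}(I-P_{\clq_i})=0$ and $T_i^{*}=M_{z_i}^{*}|_{\clq_1}$; an easy induction using $P_{\clq_1}M_{z_i}P_{\clq_1}=P_{\clq_1}M_{z_i}$ then gives $\sigma_T^{\,j}(I_{\clq_1})=P_{\clq_1}\sigma_{M_z}^{\,j}(I)|_{\clq_1}$ for every $j$, and similarly for $S$. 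Since the shift tuple on $\bH_m(\B^n,\cle_i)$ is itself a pure $m$-hypercontraction, the positivity of $(1-\sigma_T)(I)$ and $(1-\sigma_T)^m(I)$ and the convergence $\sigma_T^{\,j}(I)\to0$ descend to $T$ and $S$. The intertwining hypothesis is precisely $XT_i=S_iX$, so the hypotheses of the pure-hypercontraction theorem hold with $\clh=\clq_1$ and $\clk=\clq_2$.

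Next I would identify the canonical dilation maps with the inclusions $\iota_i:\clq_i\hookrightarrow\bH_m(\B^n,\cle_i)$. The identity above gives $\|D_{m,T^{*}}h\|^2=\langle(1-\sigma_T)^m(I)h,h\rangle=\langle P h,h\rangle$, where $P$ is the projection of $\bH_m(\B^n,\cle_1)$ onto the constants $\cong\cle_1$; thus $\|D_{m,T^{*}}h\|$ equals the norm of the constant Fourier coefficient of $h$, and $D_{m,T^{*}}h\mapsto(\text{constant coefficient of }h)$ extends to an isometry $\tau_1:\cld_{m,T^{*}}\to\cle_1$. A short computation with the lowering action $M_z^{*\bm k}$, which sends the $\bm k$-th coefficient of $h$ to the constant coefficient of $M_z^{*\bm k}h$ up to the factor $\rho_m(\bm k)$, then yields $M_{\tau_1}\pi_T=\iota_1$, where $M_{\tau_1}$ denotes multiplication by the constant $\tau_1$; one gets $\tau_2:\cld_{m,S^{*}}\to\cle_2$ with $M_{\tau_2}\pi_S=\iota_2$ in the same way. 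Granting the positivity condition, the pure-hypercontraction theorem supplies a Schur-Agler $\Psi:\B^n\to\clb(\cld_{m,T^{*}},\cld_{m,S^{*}})$ with $\pi_TX^{*}=M_\Psi^{*}\pi_S$, and I set $\Phi:=\tau_2\,\Psi\,\tau_1^{*}$. Then $M_\Phi=M_{\tau_2}M_\Psi M_{\tau_1}^{*}$, so $M_\Phi^{*}\iota_2=M_{\tau_1}M_\Psi^{*}M_{\tau_2}^{*}\iota_2=M_{\tau_1}M_\Psi^{*}\pi_S=M_{\tau_1}\pi_TX^{*}=\iota_1X^{*}$, using $M_{\tau_2}^{*}M_{\tau_2}=I$ and $M_{\tau_1}^{*}M_{\tau_1}=I$; this is exactly $M_\Phi^{*}|_{\clq_2}=X^{*}$. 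That $\Phi$ is again Schur-Agler follows from the kernel characterization behind Theorem~\ref{char Schur-Agler class in ball}: conjugating $\tfrac{I-\Psi(\z)\Psi(\w)^{*}}{1-\langle\z,\w\rangle}\ge0$ by the constant isometry $\tau_2$ and adding the positive kernel $\tfrac{\Phi(\z)(I-\tau_1\tau_1^{*})\Phi(\w)^{*}}{1-\langle\z,\w\rangle}$, which is a Schur product of the Drury-Arveson kernel with a positive operator kernel, produces $\tfrac{I-\Phi(\z)\Phi(\w)^{*}}{1-\langle\z,\w\rangle}\ge0$.

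For the converse I would run the same transport in reverse: given a Schur-Agler $\Phi$ with $M_\Phi^{*}|_{\clq_2}=X^{*}$, set $\Psi:=\tau_2^{*}\Phi\tau_1$, check that $\Psi$ is Schur-Agler from $\tfrac{I-\Phi\tau_1\tau_1^{*}\Phi^{*}}{1-\langle\z,\w\rangle}=\tfrac{I-\Phi\Phi^{*}}{1-\langle\z,\w\rangle}+\tfrac{\Phi(I-\tau_1\tau_1^{*})\Phi^{*}}{1-\langle\z,\w\rangle}\ge0$, and verify $\pi_TX^{*}=M_\Psi^{*}\pi_S$; the necessity half of the pure-hypercontraction theorem then gives $(1-\sigma_S)^{m-1}(I-XX^{*})\ge0$. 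The \emph{moreover} clause is automatic once $\Psi$ is taken to be the transfer function of $U^{*}$ from \eqref{Unitary in Ball set up}, since $\Phi(\z)=\tau_2\big(A^{*}+C^{*}(I-ZD^{*})^{-1}ZB^{*}\big)\tau_1^{*}$ is again of transfer-function form after the constants $\tau_1^{*},\tau_2$ are absorbed into the colligation. The main obstacle is the coefficient-space bookkeeping of the second step—constructing the isometries $\tau_1,\tau_2$ and confirming that transport through them preserves both Schur-Agler membership and the lifting identity $M_\Phi^{*}|_{\clq_2}=X^{*}$; once that is in place, the hypercontractivity, purity, and the final positivity are mechanical transfers from the theorem already proved.
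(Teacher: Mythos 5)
Your argument is essentially the paper's own: the paper derives this theorem as an immediate consequence of the preceding intertwining lifting theorem for pure $m$-hypercontractions, exactly as you do, and your isometries $\tau_1,\tau_2$ merely supply the coefficient-space bookkeeping (identifying $\cld_{m,T^*},\cld_{m,S^*}$ inside $\cle_1,\cle_2$ and $\pi_T,\pi_S$ with the inclusions) that the paper leaves implicit. One small slip in the forward direction: the positive kernel you must add to $\tau_2\,\frac{I-\Psi(\z)\Psi(\w)^*}{1-\langle\z,\w\rangle}\,\tau_2^*$ in order to reach $\frac{I-\Phi(\z)\Phi(\w)^*}{1-\langle\z,\w\rangle}$ is $\frac{I-\tau_2\tau_2^*}{1-\langle\z,\w\rangle}$, since your proposed term $\frac{\Phi(\z)(I-\tau_1\tau_1^*)\Phi(\w)^*}{1-\langle\z,\w\rangle}$ vanishes identically ($\tau_1^*\tau_1=I$ gives $\Phi(I-\tau_1\tau_1^*)=\tau_2\Psi\tau_1^*(I-\tau_1\tau_1^*)=0$); the corrected term is again a Schur product of the Drury--Arveson kernel with a positive constant operator, so the conclusion stands.
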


\begin{Remark}
For $m=1$ and $n=1$, the above theorem yields the Sarason's commutant lifting 
theorem with explicit description of the lifting. Such an explicit commtant lifting result also observed in ~\cite{DS} and is the basis for sharp von Neumann inequality. It is worth mentioning here that,  as an application of this explicit commutant lifting, inerpolants of Nevenlina-Pick problem on the unit disc can be described explicitly (see Section~\ref{Concluding section} below).  

\end{Remark}

The above theorem suggests that, in the setting of weighted Bergman space, commutant lifting in the Schur-Agler class of functions does not hold in general and certain positivity is required. As we have 
pointed out earlier that the positivity condition is automatic in 
the case of Drury-Arveson space $(m=1)$.
 Thus we recover the well-known 
commutant lifting theorem for Drury-Arveson spaces
(see ~\cite{AT}, \cite{BTV}) as a consequence. 
 There are now several proofs of this result available in the literature.
 However,
our proof, for this particular case, is not only new but also
provides an explicit description of the lifting. Explicitness 
of the lifting is naturally important from the point of view its
application in Nevanlinna-Pick interpolation theorem. We sate 
this particular commutant lifting result in the next theorem. 
For readers convenience, below we indicate the structure of the unitary, 
similar to the one constructed in ~\eqref{Unitary in Ball set up}, 
which gives the explicit lifting.
 If $S$ and $T$ are row contractions on $\clh$ and 
 $\clk$ respectively and if $X\in B(\clh,\clk)$ is a contraction such that 
 $XT_i=S_iX$ for all $i=1,\dots,n$, then the  
  identity ~\eqref{identity-ball set up} reduces to 
\[
 D_{1,S^*}^2 + \sigma_S(I-XX^*) = (I-XX^*) + XD_{1,T^*}^2X^*.
 \]
Thus by adding an infinite dimensional Hilbert space $\cle$, if necessary,
and setting 
\[\clr:=\overline{\text{ran}}(I-XX^*)\oplus \cle,\]
 we construct a unitary
\begin{equation}\label{Unitary for Arveson space}
U: \cld_{1,S^*}\oplus \clr^n\to \cld_{1,T^*}\oplus \clr
\end{equation}
such that 
\[
U(D_{1,S^*}k, ((I-XX^*)^{1/2}S_1^*k,0_{\cle}), \dots,
 ((I-XX^*)^{1/2}S_n^* k, 0_{\cle}))=(D_{1,T^*}X^*k, ((I-XX^*)^{1/2}k, 0_{\cle})),
\]
for all $k\in\clk$.

\begin{Theorem}
For Hilbert spaces $\cle_1$ and $\cle_2$, let $\clq_i $ be a co-invariant subspace of $H^2_n(\cle_i)$ for all $i=1,2$. Suppose that $X \in \clb(\clq_1,\clq_2)$ is a contraction such that 
\[X(P_{\clq_1}M_{z_i}|_{\clq_1})=(P_{\clq_2}M_{z_i}|_{\clq_2})X
\] 
for all $i=1,\ldots,n$.
Then there exists a multiplier $\Phi \in \clm_1(H^2_n(\cle_1), H^2_n(\cle_2) )$ such that 
\[X^* = M_{\Phi}^*|_{\clq_2}.\]
 Moreover, the multiplier $\Phi$ can be taken to be the transfer function of the  unitary $U^*$ as in ~\ref{Unitary for Arveson space} corresponding to 
 $T=(P_{\clq_1}M_{z_1}|_{\clq_1},\dots, P_{\clq_1}M_{z_n}|_{\clq_1})$ and 
 $S=(P_{\clq_2}M_{z_1}|_{\clq_2},\dots, P_{\clq_2}M_{z_n}|_{\clq_2})$.
\end{Theorem}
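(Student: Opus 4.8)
The plan is to obtain this theorem as the $m=1$ specialization of Theorem~\ref{CLT for m-hyper in ball set up}, where the positivity hypothesis becomes vacuous. First I would record that $H^2_n(\cle_i)=\bH_1(\B^n,\cle_i)$, so that $\clq_1$ and $\clq_2$ are co-invariant subspaces of weighted Bergman spaces in the case $m=1$, and the compressed tuples $T=(P_{\clq_1}M_{z_i}|_{\clq_1})_{i=1}^n$ on $\clq_1$ and $S=(P_{\clq_2}M_{z_i}|_{\clq_2})_{i=1}^n$ on $\clq_2$ are precisely the $n$-tuples to which that theorem applies.

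The crucial observation is that for $m=1$ the positivity condition of Theorem~\ref{CLT for m-hyper in ball set up} reads
\[
(1-\sigma_S)^{m-1}(I-XX^*)=(1-\sigma_S)^{0}(I-XX^*)=I-XX^*\ge 0,
\]
which is automatic since $X$ is assumed to be a contraction. Consequently the hypotheses of Theorem~\ref{CLT for m-hyper in ball set up} hold with no extra assumption, and it yields a Schur-Agler function $\Phi:\B^n\to\clb(\cle_1,\cle_2)$ with $X^*=M_\Phi^*|_{\clq_2}$. As the Schur-Agler class on $\B^n$ is by definition $\clm_1(H^2_n(\cle_1),H^2_n(\cle_2))$, this $\Phi$ is the asserted contractive multiplier.

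For the explicit form I would specialize the general construction to $m=1$. There one has $\Delta_{S,X}^2=(1-\sigma_S)^{0}(I-XX^*)=I-XX^*$, so that the fundamental identity \eqref{identity-ball set up} collapses to
\[
D_{1,S^*}^2+\sigma_S(I-XX^*)=(I-XX^*)+XD_{1,T^*}^2X^*.
\]
This equality of positive operators is exactly the input needed to define, on the two listed families of vectors, an isometry that extends---after enlarging the target by an auxiliary infinite-dimensional space $\cle$ if necessary---to the unitary $U$ of \eqref{Unitary for Arveson space}; the multiplier $\Phi$ is then the transfer function of $U^*$, as in the moreover-part of Theorem~\ref{CLT for m-hyper in ball set up}.

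Since the entire statement is the $m=1$ instance of an already-established result, there is no genuine obstacle beyond verifying that the reduced identity above indeed makes the two vector families isometric (equivalently, that their Gram matrices coincide), which is what guarantees that $U$, and hence its transfer function, are well defined; this verification is a direct transcription of the corresponding step in the proof of Theorem~\ref{CLT for m-hyper in ball set up}.
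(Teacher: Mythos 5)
Your proposal is correct and follows essentially the same route as the paper: the authors also obtain this theorem as the $m=1$ specialization of Theorem~\ref{CLT for m-hyper in ball set up}, observing that the positivity hypothesis reduces to $I-XX^*\ge 0$ (automatic for a contraction), and they construct the unitary of \eqref{Unitary for Arveson space} from exactly the reduced identity $D_{1,S^*}^2+\sigma_S(I-XX^*)=(I-XX^*)+XD_{1,T^*}^2X^*$ that you write down. No gaps.
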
   
As an application, the above theorem can be used to prove ~Theorem~\ref{char Schur-Agler class in ball}, that is every  Schur-Agler functions has a transfer function realization corresponding to a unitary. This is a well-known result (see \cite{AT, BTV, EP}). However, our proof is different and the description of the 
unitary is somewhat more explicit. We summarize this in the following remark.

\begin{Remark}
 In the above theorem, if we take $\clq_i=H^2_n(\cle_i)$ for all $i=1,2$ and if $X=M_{\Phi}$ for some Schur-Agler function 
 $\Phi:\B^n\to \clb(\cle_1,\cle_2)$, then by the last part
 of the theorem $\Phi$ is a transfer function of a explicit unitary 
 $U^*$ as in ~\eqref{Unitary for Arveson space} corresponding to 
 $T=M_z$ on $H^2_n(\cle_1)$ and $S=M_z$ on $H^2_n(\cle_2)$.
\end{Remark}

Next we consider the intertwining lifting problem corresponding to a $m$-hypercontraction and a $p$-hypercontraction with $p>m$. 
More precisely, if $T$ is a pure $m$-hypercontraction on $\clh$ and $S$ is a pure $p$-hypercontraction on $\clk$ and 
if $X\in\clb(\clh,\clk)$ is a contraction with $XT_i=S_iX$ for all $i=1,\dots,n$ then 
we find a necessary and sufficient condition on $X$ so that $X$ has a lifting of certain type (see Theorem~\ref{ILT for p>m}) in $\clm_1(\bH_m(\B^n,\cld_{m,T^*}),\bH_p(\B^n,\cld_{p,S^*}))$.
This is obtained using a dilation technique recently found in ~\cite{DPST}.
For $p>m$ and a Hilbert space $\cle$, since the kernel of
 $\bH_m(\B^n,\cle)$ is a factor of the corresponding kernel of
 $\bH_p(\B^n,\cle)$, we get the following dilation result as an application of  Theorem 6.1 in ~\cite{KSST}.
 \begin{Proposition}[cf. ~\cite{KSST}]\label{dilation_pm}
Let $p>m$ and let $\cle_*$ be a Hilbert space. Then there exist a Hilbert space $\clf$ and  
an isometry $\pi_{pm}: \mathbb{H}_p(\B^n,\cle_*)\to \mathbb{H}_m(\B^n,\clf) $ such that 
\[
 \pi_{pm}M_{z_i}^*=M_{z_i}^*\pi_{pm}, \quad (1\le i\le n)
\]
where $M_{z_i}$ in the left side of the above identity is the 
shift on $\mathbb{H}_p(\B^n,\cle_*)$ where as $M_{z_i}$ in the right 
is the shift on $\mathbb{H}_m(\B^n,\clf)$.
\end{Proposition}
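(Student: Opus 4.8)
The plan is to realise $\pi_{pm}$ as an instance of the general kernel-factorisation dilation of \cite[Theorem 6.1]{KSST}; the only hypothesis one has to supply is a factorisation of the reproducing kernel of $\bH_p(\B^n,\cle_*)$ through that of a space of the form $\bH_m(\B^n,\clf)$. Since $p>m$ with $p-m\in\N$, for all $\z,\w\in\B^n$ we have the elementary identity
\[
K_p(\z,\w)=(1-\langle\z,\w\rangle)^{-p}=(1-\langle\z,\w\rangle)^{-m}\,(1-\langle\z,\w\rangle)^{-(p-m)}=K_m(\z,\w)\,K_{p-m}(\z,\w),
\]
and the scalar factor $K_{p-m}(\z,\w)=\sum_{\bm k\in\Z_+^n}\rho_{p-m}(\bm k)\,\z^{\bm k}\bar\w^{\bm k}$ (with $\rho_{p-m}(\bm k)>0$) is positive definite. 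Thus the kernel $K_pI_{\cle_*}$ of $\bH_p(\B^n,\cle_*)$ factors as the scalar kernel $K_m$ times the positive definite operator kernel $K_{p-m}I_{\cle_*}$, which is exactly the input required by \cite[Theorem 6.1]{KSST}.

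First I would make the coefficient space explicit through the Kolmogorov--Aronszajn theorem (Theorem I.5.1 in \cite{MP}, recalled in Section~\ref{Preliminaries}) applied to $K_{p-m}$: there is a Hilbert space $\clg$ and a map $F:\B^n\to\clb(\clg,\C)$ with $K_{p-m}(\z,\w)=F(\z)F(\w)^*$; concretely one may take $\clg=\ell^2(\Z_+^n)$ and $F(\w)^*1=\bigl(\sqrt{\rho_{p-m}(\bm k)}\,\overline{\w^{\bm k}}\bigr)_{\bm k}$. Setting $\clf:=\cle_*\otimes\clg$ and $g_\w:=F(\w)^*1\in\clg$, I would define $\pi_{pm}$ on the kernel functions of $\bH_p(\B^n,\cle_*)$ by
\[
\pi_{pm}\bigl(K_p(\cdot,\w)e\bigr)=K_m(\cdot,\w)\,(e\otimes g_\w)\qquad(\w\in\B^n,\ e\in\cle_*),
\]
the right-hand side being a genuine element of $\bH_m(\B^n,\clf)$ since it is holomorphic in the free variable. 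The isometry property then reduces to the single computation
\[
\langle K_m(\cdot,\w)(e\otimes g_\w),\,K_m(\cdot,\w')(e'\otimes g_{\w'})\rangle=K_m(\w',\w)\,\langle e,e'\rangle\,\langle g_\w,g_{\w'}\rangle=K_m(\w',\w)K_{p-m}(\w',\w)\langle e,e'\rangle,
\]
which equals $K_p(\w',\w)\langle e,e'\rangle=\langle K_p(\cdot,\w)e,K_p(\cdot,\w')e'\rangle$ by the factorisation; bilinearity shows $\pi_{pm}$ is well defined and inner-product preserving on the dense span of kernel functions, hence extends to an isometry.

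It remains to check the intertwining relation. Since a kernel function is a joint eigenvector of the backward shifts, $M_{z_i}^*\bigl(K_p(\cdot,\w)e\bigr)=\bar w_i\,K_p(\cdot,\w)e$ in $\bH_p(\B^n,\cle_*)$ and likewise $M_{z_i}^*\bigl(K_m(\cdot,\w)\xi\bigr)=\bar w_i\,K_m(\cdot,\w)\xi$ in $\bH_m(\B^n,\clf)$, so on the total set of kernel functions
\[
\pi_{pm}M_{z_i}^*\bigl(K_p(\cdot,\w)e\bigr)=\bar w_i\,K_m(\cdot,\w)(e\otimes g_\w)=M_{z_i}^*\pi_{pm}\bigl(K_p(\cdot,\w)e\bigr),
\]
whence $\pi_{pm}M_{z_i}^*=M_{z_i}^*\pi_{pm}$ for $1\le i\le n$ by density. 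There is no serious obstacle here: the whole statement is a corollary of the abstract mechanism of \cite[Theorem 6.1]{KSST}, and the only points requiring care are bookkeeping ones — fixing the Kolmogorov convention $K_{p-m}(\z,\w)=F(\z)F(\w)^*$ so that $\langle g_\w,g_{\w'}\rangle=K_{p-m}(\w',\w)$ comes out with the correct order of arguments, and thereby ensuring that the factorisation matches the kernel of $\bH_p(\B^n,\cle_*)$ and that the intertwining lands on the adjoints $M_{z_i}^*$ rather than on the shifts themselves.
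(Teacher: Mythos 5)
Your proposal is correct and takes essentially the same route as the paper: the paper obtains the proposition directly from the factorisation $K_p=K_m\,K_{p-m}$ of the kernels by citing Theorem 6.1 of \cite{KSST}, and your argument simply unwinds that mechanism in this special case, with the Kolmogorov decomposition of $K_{p-m}$ producing the coefficient space $\clf=\cle_*\otimes\clg$ and the definition on kernel functions yielding both the isometry and the intertwining. The bookkeeping points you flag (the order of arguments in $\langle g_\w,g_{\w'}\rangle=K_{p-m}(\w',\w)$ and the eigenvector property of kernel functions under $M_{z_i}^*$) are handled correctly, so there is nothing to add.
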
 
It is clear from the intertwining property that the adjoint of the 
dilation map $\pi_{pm}$ is a co-isometric multiplier in $\clm_1(\mathbb{H}_m(\B^n,\clf), \mathbb{H}_p(\B^n,\cle_*))$. Now for a co-invariant subspace
$\clq_2$ of $\mathbb{H}_p(\B^n,\cle_*)$, if $i_{\clq_2}: \clq_2 \xhookrightarrow{} \mathbb{H}_p(\B^n,\cle_*)$ is the inclusion map 
then it can be checked easily that the map 
\begin{equation} \label{dilation map Pi_q2} 
\pi_{\clq_2}:= \pi_{pm} \circ i_{\clq_2}: \clq_2 \to \mathbb{H}_m(\B^n,\clf).
\end{equation} 
satisfies
\[
 \pi_{\clq_2}M_{z_i}^*|_{\clq_2}=M_{z_i}^*\pi_{\clq_2}.
\]
In other words, $\pi_{\clq_2}:\clq_2\to \mathbb{H}_m(\B^n,\clf)$  
is a dilation map. Using this dilation map next we prove a lemma 
which is essential for the intertwining lifting theorem to follow.  
We mention here that 
the part (a) of the lemma below is a suitable modification of Lemma 3.1
in ~\cite{DPST}. 
\begin{Lemma}\label{construction of tildeX}
Let $p>m$ and let $\clq_1$ and $\clq_2$ be co-invariant 
subspaces of $\mathbb{H}_m(\B^n,\cle)$ and $\mathbb{H}_p(\B^n,\cle_*) $ respectively. Let $X \in \clb(\clq_1,\clq_2)$ be a  contraction. Set
$\tilde{X}:=\pi_{\clq_2}X\in \clb(\clq_1,\pi_{\clq_2}(\clq_2))$ where 
$\pi_{\clq_2}$ is the dilation map as in ~\eqref{dilation map Pi_q2}.

$\textup{(a)}$ Then for any $1\le i\le n$,  $X$ satisfies  
\[
X(P_{\clq_1}M_{z_i}|_{\clq_1}) = (P_{\clq_2}M_{z_i}|_{\clq_2})X,
\]
 if and only if 
\[ 
\tilde{X} (P_{\clq_1}M_{z_i}|_{\clq_1}) = (P_{\pi_{\clq_2}(\clq_2)}M_{z_i}|_{\pi_{\clq_2}(\clq_2)}) \tilde{X}.
\]

$\textup{(b)}$ The  positivity 
$(1-\sigma_S)^{m-1}(I-XX^*)\ge 0 $ for $X$ is equivalent to the
corresponding positivity  $(1-\sigma_{S'})^{m-1}(I-\tilde{X}\tilde{X}^*)\ge 0 $
for $\tilde{X}$ where 
 $S=(P_{\clq_2}M_{z_1}|_{\clq_2},\dots, P_{\clq_2}M_{z_n}|_{\clq_2})$, 
and $S'=(P_{\pi_{\clq_2}(\clq_2)}M_{z_1}|_{\pi_{\clq_2}(\clq_2)},\dots,
P_{\pi_{\clq_2}(\clq_2)}M_{z_n}|_{\pi_{\clq_2}(\clq_2)})$.
\end{Lemma}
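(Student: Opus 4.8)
The plan is to exploit the fact that the dilation map $\pi_{\clq_2}$, viewed as a map onto its range, is a unitary that transports the tuple $S$ to $S'$, and that $\tilde{X}=\pi_{\clq_2}X$ is simply the transported version of $X$. First I would record the basic structural facts. Since $\pi_{pm}$ is an isometry and $i_{\clq_2}$ is the inclusion, $\pi_{\clq_2}=\pi_{pm}\circ i_{\clq_2}$ is isometric, so $\pi_{\clq_2}:\clq_2\to\pi_{\clq_2}(\clq_2)$ is a unitary with $\pi_{\clq_2}^*\pi_{\clq_2}=I_{\clq_2}$ and $\pi_{\clq_2}\pi_{\clq_2}^*=I_{\pi_{\clq_2}(\clq_2)}$. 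From the intertwining relation $\pi_{\clq_2}M_{z_i}^*|_{\clq_2}=M_{z_i}^*\pi_{\clq_2}$ recorded just after \eqref{dilation map Pi_q2} together with the co-invariance of $\clq_2$, one checks that $\pi_{\clq_2}(\clq_2)$ is co-invariant in $\mathbb{H}_m(\B^n,\clf)$ (so that $S'$ is well defined) and that $\pi_{\clq_2}S_i^*=(S_i')^*\pi_{\clq_2}$ for each $i$. Taking adjoints and conjugating by the unitary $\pi_{\clq_2}$ then yields the two equivalent forms
\[
\pi_{\clq_2}S_i=S_i'\pi_{\clq_2}\quad\text{and}\quad S_i'=\pi_{\clq_2}S_i\pi_{\clq_2}^*\qquad(1\le i\le n).
\]

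With this in hand, part (a) is immediate. Writing $T_i=P_{\clq_1}M_{z_i}|_{\clq_1}$ and using $\tilde{X}=\pi_{\clq_2}X$, if $XT_i=S_iX$ then $\tilde{X}T_i=\pi_{\clq_2}XT_i=\pi_{\clq_2}S_iX=S_i'\pi_{\clq_2}X=S_i'\tilde{X}$. Conversely, if $\tilde{X}T_i=S_i'\tilde{X}$, then $\pi_{\clq_2}XT_i=S_i'\pi_{\clq_2}X=\pi_{\clq_2}S_iX$, and cancelling the injective isometry $\pi_{\clq_2}$ gives $XT_i=S_iX$.

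For part (b) I would transport the completely positive map. Using $S_i'=\pi_{\clq_2}S_i\pi_{\clq_2}^*$ and $\pi_{\clq_2}^*\pi_{\clq_2}=I_{\clq_2}$, a direct computation shows that for every $Y\in\clb(\clq_2)$,
\[
\sigma_{S'}(\pi_{\clq_2}Y\pi_{\clq_2}^*)=\sum_{i=1}^n S_i'(\pi_{\clq_2}Y\pi_{\clq_2}^*)(S_i')^*=\pi_{\clq_2}\Big(\sum_{i=1}^n S_iYS_i^*\Big)\pi_{\clq_2}^*=\pi_{\clq_2}\,\sigma_S(Y)\,\pi_{\clq_2}^*.
\]
Since also $I_{\pi_{\clq_2}(\clq_2)}=\pi_{\clq_2}I_{\clq_2}\pi_{\clq_2}^*$, the map $Y\mapsto\pi_{\clq_2}Y\pi_{\clq_2}^*$ intertwines $1-\sigma_S$ with $1-\sigma_{S'}$, and iterating this $m-1$ times gives $(1-\sigma_{S'})^{m-1}(\pi_{\clq_2}Y\pi_{\clq_2}^*)=\pi_{\clq_2}(1-\sigma_S)^{m-1}(Y)\pi_{\clq_2}^*$. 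Finally, from $\tilde{X}=\pi_{\clq_2}X$ one has $I_{\pi_{\clq_2}(\clq_2)}-\tilde{X}\tilde{X}^*=\pi_{\clq_2}(I_{\clq_2}-XX^*)\pi_{\clq_2}^*$, so taking $Y=I_{\clq_2}-XX^*$ yields
\[
(1-\sigma_{S'})^{m-1}(I-\tilde{X}\tilde{X}^*)=\pi_{\clq_2}\,(1-\sigma_S)^{m-1}(I-XX^*)\,\pi_{\clq_2}^*.
\]
Because conjugation by the unitary $\pi_{\clq_2}$ preserves and reflects positivity, the two positivity conditions are equivalent.

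The only genuinely delicate point is the bookkeeping in the first paragraph: verifying that $\pi_{\clq_2}(\clq_2)$ is co-invariant, so that $S'$ is even defined, and keeping the adjoints straight when $\pi_{\clq_2}$ is regarded as a unitary between the two distinct spaces $\clq_2$ and $\pi_{\clq_2}(\clq_2)$. Once the unitary equivalence $\pi_{\clq_2}S_i=S_i'\pi_{\clq_2}$ is in place, both parts reduce to routine algebraic manipulation, with no estimate or convergence argument required.
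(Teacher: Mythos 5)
Your proposal is correct and follows essentially the same route as the paper: both rest on the intertwining identity $\pi_{\clq_2}S_i=S_i'\pi_{\clq_2}$, from which (a) is a one-line computation and (b) follows from the conjugation identity $(1-\sigma_{S'})^{m-1}(I-\tilde{X}\tilde{X}^*)=\pi_{\clq_2}(1-\sigma_S)^{m-1}(I-XX^*)\pi_{\clq_2}^*$. Your write-up is in fact slightly more careful than the paper's in justifying that $\pi_{\clq_2}(\clq_2)$ is co-invariant and in handling all the mixed monomials in the expansion of $(1-\sigma_S)^{m-1}$ rather than only the single-variable powers $S_i^r(\cdot)S_i^{*r}$, but the underlying argument is identical.
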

\begin{proof}
Since $\pi_{\clq_2}$ is a dilation map, we have 
\[
\pi_{\clq_2}(P_{\clq_2}M_{z_i}|_{\clq_2})=(P_{\pi_{\clq_2}(\clq_2)}
M_{z_i}|_{\pi_{\clq_2}(\clq_2)})\pi_{\clq_2}.
\]
Now if $X\in \clb(\clq_1,\clq_2)$
is a contraction satisfies 
\[
 X(P_{\clq_1}M_{z_i}|_{\clq_1}) = (P_{\clq_2}M_{z_i}|_{\clq_2})X,
\]
for some $1\le i\le n$,
then 
\begin{align*}
 \tilde{X}(P_{\clq_1}M_{z_i}|_{\clq_1})&=\pi_{\clq_2}X(P_{\clq_1}M_{z_i}|_{\clq_1})\\
 &=\pi_{\clq_2}(P_{\clq_2}M_{z_i}|_{\clq_2})X\\
 &=(P_{\pi_{\clq_2}(\clq_2)}M_{z_i}|_{\pi_{\clq_2}(\clq_2)})
 \pi_{\clq_2}X\\
 &=(P_{\pi_{\clq_2}(\clq_2)}M_{z_i}|_{\pi_{\clq_2}(\clq_2)})\tilde{X}.
\end{align*}
The converse part is similar. This proves part (a). For part (b), since 
$\pi_{\clq_2}S_i=S_i^{'}\pi_{\clq_2}$ we have  
\[
\pi_{\clq_2}S_i^rXX^*S_i^{* r}\pi_{\clq_2}^*|_{\pi_{\clq_2}(\clq_2)}=(S_i^{'})^{ r}\pi_{\clq_2}XX^*\pi_{\clq_2}^*(S_i^{'})^{ * r}=( S_i^{'})^{ r}\tilde{X}\tilde{X}^*(S_i^{'})^{ * r},
\]
for any $0\le r\le m-1$ and for all $i=1,\dots,n$. This in particular implies that 
\[
(1-\sigma_{S'})^{m-1}(I-\tilde{X}\tilde{X}^*)= 
\pi_{\clq_2}(1-\sigma_S)^{m-1}(I-XX^*)\pi_{\clq_2}^{*}|_{\pi_{\clq_2}(\clq_2)}.
\]
The proof now follows from the above identity.
\end{proof}

Now we are ready to prove the intertwining lifting theorem in the present setting.
\begin{Theorem}\label{ILT for p>m}
Let $p>m$. For Hilbert spaces $\cle$ and $\cle_*$, let $\clq_1$ and $\clq_2$ be co-invariant subspaces of $\mathbb{H}_m(\B^n,\cle)$ and $\mathbb{H}_p(\B^n,\cle_*) $ respectively.  
If $X\in \clb(\clq_1,\clq_2)$ is a contraction such that
\[
X(P_{\clq_1}M_{z_i}|_{\clq_1})=(P_{\clq_2}M_{z_i}|_{\clq_2})X 
\]
for all $i=1,\dots,n$, 
then there exists a Hilbert space $\clf$, a Schur-Agler function 
$\Phi_1:\B^n \to \clb(\cle,\clf)$ and a co-isometric multiplier 
$\Phi_2 \in \clm_1(\mathbb{H}_m(\B^n,\clf), \mathbb{H}_p(\B^n,\cle_*))$ such that 
 $X^*=M_{\Phi}^*|_{\clq_2}$ where $\Phi=\Phi_2\Phi_1$ if and only if 
\[(1-\sigma_S)^{m-1}(I-XX^*) \geq 0
 \]
where $S=(P_{\clq_2}M_{z_1}|_{\clq_2},\dots, P_{\clq_2}M_{z_n}|_{\clq_2})$.
\end{Theorem}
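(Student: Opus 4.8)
The plan is to reduce Theorem~\ref{ILT for p>m} to the already-established commutant lifting theorem for pure $m$-hypercontractions (the first Theorem of Section~\ref{Commutant lifting in B^n}) by pushing the problem through the dilation map $\pi_{\clq_2}$ of Proposition~\ref{dilation_pm}. The key mechanism is Lemma~\ref{construction of tildeX}, which lets us trade the awkward $m$-versus-$p$ mismatch for a purely $m$-level problem, at the cost of enlarging the target space from $\cle_*$ to $\clf$ and remembering how to come back down.

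\textbf{Necessity.} First I would dispose of the ``only if'' direction. Suppose $X^* = M_{\Phi}^*|_{\clq_2}$ with $\Phi = \Phi_2\Phi_1$, $\Phi_1$ a Schur-Agler function and $\Phi_2 \in \clm_1(\mathbb{H}_m(\B^n,\clf), \mathbb{H}_p(\B^n,\cle_*))$ a co-isometric multiplier. The point is that $\Phi$ itself lies in $\clm_1(\mathbb{H}_m(\B^n,\cle), \mathbb{H}_p(\B^n,\cle_*))$, being a product of a Schur-Agler function and a contractive multiplier (invoking Proposition~\ref{inclusion} to land $\Phi_1$ into the $p$-level multiplier class). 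One then runs the same positivity computation as in the converse part of the first Theorem of the section: using the reproducing kernel $K_m(\cdot,\bm w)$, the fact that $M_{\Phi}M_{z_i}=M_{z_i}M_{\Phi}$, and that $\Phi$ being a contractive multiplier of $\mathbb{H}_m$ makes the appropriate kernel positive definite, one obtains $(1-\sigma_{M_z})^{m-1}(I-M_{\Phi}M_{\Phi}^*)\ge 0$ on $\mathbb{H}_m(\B^n,\cle)$, and compresses this to $\clq_2$ to get $(1-\sigma_S)^{m-1}(I-XX^*)\ge 0$. The one subtlety here is that the relevant multiplier norm governing positivity is the $m$-level one, so I must be careful that the realization of $\Phi$ as a product genuinely yields a contractive $\mathbb{H}_m$-multiplier; this is exactly what Proposition~\ref{inclusion} supplies.

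\textbf{Sufficiency.} Now assume $(1-\sigma_S)^{m-1}(I-XX^*)\ge 0$. Set $\tilde{X} := \pi_{\clq_2}X \in \clb(\clq_1,\pi_{\clq_2}(\clq_2))$ as in Lemma~\ref{construction of tildeX}. By part (a) of that lemma, $\tilde X$ intertwines the compressed shifts on $\clq_1\subseteq\mathbb{H}_m(\B^n,\cle)$ and on $\pi_{\clq_2}(\clq_2)\subseteq\mathbb{H}_m(\B^n,\clf)$, where $\clf$ is the space furnished by Proposition~\ref{dilation_pm}; and by part (b), the hypothesized positivity transfers to $(1-\sigma_{S'})^{m-1}(I-\tilde X\tilde X^*)\ge 0$. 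Both $\clq_1$ and $\pi_{\clq_2}(\clq_2)$ are now co-invariant subspaces of $m$-level weighted Bergman spaces, so I can apply Theorem~\ref{CLT for m-hyper in ball set up} directly to $\tilde X$: it produces a Schur-Agler function $\Phi_1 : \B^n \to \clb(\cle,\clf)$ with $\tilde{X}^* = M_{\Phi_1}^*|_{\pi_{\clq_2}(\clq_2)}$. Finally I set $\Phi_2 := \pi_{pm}^*$ viewed as a multiplier; as noted just before the lemma, $\pi_{pm}^*$ is a co-isometric multiplier in $\clm_1(\mathbb{H}_m(\B^n,\clf),\mathbb{H}_p(\B^n,\cle_*))$, and $\pi_{\clq_2}=\pi_{pm}\circ i_{\clq_2}$ gives $M_{\Phi_2}^*|_{\clq_2}=\pi_{\clq_2}$. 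Composing, with $\Phi=\Phi_2\Phi_1$, one checks
\[
M_{\Phi}^*|_{\clq_2}=M_{\Phi_1}^*M_{\Phi_2}^*|_{\clq_2}=M_{\Phi_1}^*\,\pi_{\clq_2}=\tilde X^*\text{ (read back through }\pi_{\clq_2})=X^*,
\]
where the last identification uses $\tilde X=\pi_{\clq_2}X$ together with the intertwining relations to unwind $\pi_{\clq_2}$.

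\textbf{Main obstacle.} The routine part is the diagram-chase in the last display; the genuinely delicate step is the final identification $M_{\Phi_1}^*\pi_{\clq_2}=X^*$, i.e.\ verifying that the lift $\Phi_1$ of $\tilde X$ produced on the enlarged space $\clf$, when pulled back by the co-isometry $\pi_{\clq_2}$, recovers exactly $X^*$ rather than merely something agreeing with it on a subspace. This requires using that $\pi_{\clq_2}$ is an isometry onto $\pi_{\clq_2}(\clq_2)$ and that $\tilde X^*=M_{\Phi_1}^*|_{\pi_{\clq_2}(\clq_2)}$ holds as an identity of operators into $\clq_1$, so that $\pi_{\clq_2}^*\tilde X^*=\pi_{\clq_2}^*\pi_{\clq_2}X^*=X^*$. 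The bookkeeping of which shifts and which projections act on which of the three co-invariant subspaces $\clq_1,\clq_2,\pi_{\clq_2}(\clq_2)$ is where care is most needed, but Lemma~\ref{construction of tildeX} has been engineered precisely to make each of these compatibilities hold.
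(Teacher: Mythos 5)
Your sufficiency argument is exactly the paper's: dilate via $\pi_{\clq_2}=\pi_{pm}\circ i_{\clq_2}$, transfer the intertwining relation and the positivity through Lemma~\ref{construction of tildeX}, apply Theorem~\ref{CLT for m-hyper in ball set up} to $\tilde X$ to get $\Phi_1$, and take $\Phi_2=\pi_{pm}^*$. That half, including the final unwinding $\pi_{\clq_2}^*\tilde X^*=X^*$, is correct.

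The necessity direction has a genuine gap. You derive $(1-\sigma_{M_z})^{m-1}(I-M_\Phi M_\Phi^*)\ge 0$ from the fact that $\Phi=\Phi_2\Phi_1$ lies in $\clm_1(\bH_m(\B^n,\cle),\bH_p(\B^n,\cle_*))$, claiming the ``same positivity computation'' as in the converse of the first theorem of the section applies. It does not: contractivity as an $m$-to-$p$ multiplier yields positive definiteness of $(\z,\w)\mapsto K_p(\z,\w)I-K_m(\z,\w)\Phi(\z)\Phi(\w)^*$, whereas evaluating $(1-\sigma_{M_z})^{m-1}(I-M_\Phi M_\Phi^*)$ on kernel functions produces $(1-\langle\w_i,\w_j\rangle)^{m-1}\bigl(K_p(\w_i,\w_j)I-K_m(\w_i,\w_j)\Phi(\w_i)\Phi(\w_j)^*\bigr)$, and the multiplying kernel $(1-\langle\z,\w\rangle)^{m-1}$ is not positive definite for $m\ge 2$, so positivity is not inherited. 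Indeed the paper's discussion preceding Theorem~\ref{factorization} shows the required positivity is \emph{equivalent} to $\Phi\in\clm_1(\bH_1(\B^n,\cle),\bH_{p-m+1}(\B^n,\cle_*))$, a strictly smaller class; if mere membership in $\clm_1(\bH_m(\B^n,\cle),\bH_p(\B^n,\cle_*))$ sufficed, every such contractive multiplier would factor and Theorem~\ref{factorization} would be vacuous. (A symptom of the confusion: you assert the positivity ``on $\bH_m(\B^n,\cle)$'' and compress to $\clq_2$, but $M_\Phi M_\Phi^*$ and $\clq_2$ live in $\bH_p(\B^n,\cle_*)$.) The repair is to use the factored structure rather than only its contractivity: since $M_{\Phi_2}$ is a co-isometry intertwining the shifts, $I-M_\Phi M_\Phi^*=M_{\Phi_2}(I-M_{\Phi_1}M_{\Phi_1}^*)M_{\Phi_2}^*$ and hence $(1-\sigma_{M_z})^{m-1}(I-M_\Phi M_\Phi^*)=M_{\Phi_2}\bigl((1-\sigma_{M_z})^{m-1}(I-M_{\Phi_1}M_{\Phi_1}^*)\bigr)M_{\Phi_2}^*$, where the inner operator is formed with the shifts of $\bH_m(\B^n,\clf)$ and is nonnegative precisely because $\Phi_1$ is Schur--Agler; compressing to $\clq_2$ then gives $(1-\sigma_S)^{m-1}(I-XX^*)\ge 0$. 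This is the paper's argument.
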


\textit{Proof.}
Let $\pi_{pm}: \bH_p(\B^n,\cle_*)\to \bH_m(\B^n,\clf)$ be the dilation map 
as in Proposition~\ref{dilation_pm} for some Hilbert space $\clf$.
Then we have observe earlier that $\pi_{pm}^*$ is an co-isometric multiplier.
Let $\pi_{\clq_2}:=\pi_{pm}\circ i_{\clq_2}:\clq_2\to \bH_m(\B^n,\clf)$ be the 
dilation map of $\clq_2$ as considered in ~\eqref{dilation map Pi_q2}, 
and let $\tilde{X}=\pi_{\clq_2}X\in \clb(\clq_1,\pi_{\clq_2}(\clq_2))$.
Now if $X$ satisfies $(1-\sigma_S)^{m-1}(I-XX^*)\ge 0$, then by Lemma ~\ref{construction of tildeX} we have for all $i=1,\dots,n$,
\[
\tilde{X} (P_{\clq_1}M_{z_i}|_{\clq_1}) = (P_{\pi_{\clq_2}(\clq_2)}M_{z_i}|_{\pi_{\clq_2}(\clq_2)}) \ \text {and }
(1-\sigma_{S'})^{m-1}(I-\tilde{X}\tilde{X}^*)\ge 0,
\]
where $S'=(P_{\pi_{\clq_2}(\clq_2)}M_{z_1}|_{\pi_{\clq_2}(\clq_2)},\dots,
P_{\pi_{\clq_2}(\clq_2)}M_{z_n}|_{\pi_{\clq_2}(\clq_2)})$.
Then by Theorem~\ref{CLT for m-hyper in ball set up}
we get a Schur-Agler function $\Phi_1:\B^n\to \clb(\cle,\clf)$ such 
that $\tilde{X}^*=M_{\Phi_1}^*|_{\pi_{\clq_2}(\clq_2)}$.
Finally, setting $\Phi:=\pi_{pm}^*\Phi_1$ we see that $X^*=M_{\Phi}^*|_{\clq_2}$.

For the converse part, if $X^*=M_{\Phi}^*|_{\clq_2}$ then observe that 
\[
(1-\sigma_S)^{m-1}(I-XX^*)=P_{\clq_2}(1-\sigma_{M_z})^{m-1}(I-M_{\Phi}M_{\Phi}^*)|_{\clq_2},
\]
where $M_z=(M_{z_1},\dots,M_{z_n})$ is $n$-tuple of shifts on $\bH_p(\B^n,\cle_*)$. Further, since $\Phi=\Phi_2\Phi_1$ with $\Phi_2$ being a co-isometric multiplier
\[
(1-\sigma_{M_z})^{m-1}(I-M_{\Phi}M_{\Phi}^*)=M_{\Phi_2}((1-\sigma_{M_z})^{m-1}(I-M_{\Phi_1}M_{\Phi_1}^*))M_{\Phi_2}^*.
\]
Here $M_z$ on the left hand side is the $n$-tuple of shifts on $\bH_p(\B^n,\cle_*)$ while $M_z$ on the right is the $n$-tuple of shift on $\bH_m(\B^n,\clf)$.
On the other hand, the positivity of $(1-\sigma_{M_z})^{m-1}(I-M_{\Phi_1}M_{\Phi_1}^*)$ is a consequence of the fact that $\Phi_1$ is a Schur-Agler function.
The proof now follows.
\qed

Few remarks are in order.
\begin{Remarks}
 \textup{(i)} The proof of the above theorem suggests that in the 
 case when $X$ satisfies $(1-\sigma_S)^{m-1}(I-XX^*) \geq 0$, then
 the lifting of $X$ can be made explicit. To be more precise,
 the Hilbert space $\clf$ can be taken to be the one as in Proposition~\ref{dilation_pm}, the Schur-Agler function $\Phi_1$ can be taken as the transfer function 
of the unitary $U^*$ as in ~\eqref{Unitary in Ball set up} corresponding to 
$T=(P_{\clq_1}M_{z_1}|_{\clq_1},\dots, P_{\clq_1}M_{z_n}|_{\clq_1})$, 
$S=(P_{\pi_{\clq_2}(\clq_2)}M_{z_1}|_{\pi_{\clq_2}(\clq_2)},\dots P_{\pi_{\clq_2}(\clq_2)}M_{z_n}|_{\pi_{\clq_2}(\clq_2)} )$ with $\pi_{\clq_2}$ as in ~\eqref{dilation map Pi_q2} and corresponding to the intertwiner $\pi_{\clq_2}X$, and the co-isometric multiplier $\Phi_2$ can be taken as $\pi_{pm}^*$ appeared in Proposition~\ref{dilation_pm}. 

\textup{(ii)} If we take $m=1$ in the above theorem, then for 
any contraction $X\in\clb(\clq_1,\clq_2)$ with 
\[
X(P_{\clq_1}M_{z_i}|_{\clq_1})=(P_{\clq_2}M_{z_i}|_{\clq_2})X 
\]
for all $i=1,\dots,n$ always satisfies the positivity hypothesis
and therefore 
can be lifted to a multiplier of the form $\Phi_2\Phi_1$ where 
$\Phi_2$ is a co-isometric multiplier and $\Phi_1$ is a Schur-Agler function defined on appropriate Hilbert spaces. Thus we recover 
Theorem 3.4 in ~\cite{DPST}, as a particular case. But we emphasis here that the 
lifting we get using our theorem is explicit. 
 
\end{Remarks}

If we take $\clq_1=\mathbb{H}_m(\B^n,\cle)$ and $\clq_2=\mathbb{H}_p(\B^n,\cle_*)$ then the above theorem says that a multiplier $\Phi\in \clm_1(\mathbb{H}_m(\B^n,\cle), \mathbb{H}_p(\B^n,\cle_*))$ can be factorized as $\Phi_2\Phi_1$ where $\Phi_1:\B^n\to 
\clb(\cle,\clf)$ is a Schur-Agler function and $\Phi_2\in \clm_1(\mathbb{H}_m(\B^n,\clf), \mathbb{H}_p(\B^n,\cle_*))$ 
is a co-isometric multiplier for some Hilbert space $\clf$
if and only if
\begin{equation}\label{positivity 3}
(1-\sigma_S)^{m-1}(I-M_{\Phi}M_{\Phi}^*) \geq 0
\end{equation}
where $S=(M_{z_1},\dots, M_{z_n})$ on $\bH_p(\B^n,\cle_*)$.
Now we look at the positivity condition more carefully. For any $\w_1,\dots,\w_r\in\B^n$ and $\eta_1,\dots,\eta_r\in\cle_*$,
\begin{align*}
&\langle(I-\sigma_{M_z})^{m-1}(I-M_{\Phi}M_{\Phi}^*)\sum_{i=1}^rK_p(.,\w_i)\eta_i,
\sum_{j=1}^rK_p(.,\w_j)\eta_j\rangle\\
& =\sum_{i,j=1}^r\langle(1-\langle \w_i, \w_j\rangle)^{m-1}K_p(\w_i,\w_j)\eta_i,\eta_j\rangle\\
&\qquad\qquad\qquad\qquad\qquad\qquad\qquad
- \sum_{i,j=1}^r\langle(1-\langle \w_i,\w_j\rangle)^{m-1}K_m(\w_i,\w_j)\Phi(\w_i)\Phi(\w_j)^*\eta_i,\eta_j\rangle\\
&=\sum_{i,j=1}^r \langle (K_{p-m+1}(\w_i,\w_j)- \Phi(\w_i)\Phi(\w_j)^*K_1(\w_i,\w_j))\eta_i,\eta_j\rangle.
\end{align*}
This shows that the positivity in ~\eqref{positivity 3} is equivalent to the 
positive semi definiteness of the function
\[
(\z,\w)\mapsto K_{p-m+1}(\z,\w)I_{\cle_*}-K_1(\z,\w)\Phi(\z)\Phi(\w)^*
\quad (\z,\w\in\B^n),
\]
and which is equivalent to the fact that $\Phi$ is a multiplier 
in  $\clm_1(\bH_1(\B^n,\cle), \bH_{p-m+1}(\B^n,\cle_*))$.
Note that $\clm_1(\bH_1(\B^n,\cle), \bH_{p-m+1}(\B^n,\cle_*))\subseteq \clm_1(\bH_m(\B^n,\cle), \bH_{p}(\B^n,\cle_*))$ and thus we have the following 
factorization result for a subclass of multipliers in  $\clm_1(\bH_m(\B^n,\cle), \bH_{p}(\B^n,\cle_*))$.
\begin{Theorem}\label{factorization}
Let $p>m$, and let $\cle$, $\cle_*$ be Hilbert spaces. Suppose $\Phi$ is a multiplier in $\Phi\in \clm_1(\mathbb{H}_m(\B^n,\cle), \mathbb{H}_p(\B^n,\cle_*))$. Then $\Phi$ can be factorized as $\Phi_2\Phi_1$ where $\Phi_1:\B^n\to 
\clb(\cle,\clf)$ is a Schur-Agler function and $\Phi_2\in \clm_1(\mathbb{H}_m(\B^n,\clf), \mathbb{H}_p(\B^n,\cle_*))$ 
is a co-isometric multiplier for some Hilbert space $\clf$
if and only if
 $\Phi\in \clm_1(\mathbb{H}_1(\B^n,\cle), \mathbb{H}_{p-m+1}(\B^n,\cle_*))$.

\end{Theorem}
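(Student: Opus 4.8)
The plan is to obtain this statement as the special case of Theorem~\ref{ILT for p>m} in which both co-invariant subspaces are the whole spaces. Concretely, I would set $\clq_1=\bH_m(\B^n,\cle)$ and $\clq_2=\bH_p(\B^n,\cle_*)$ and take $X=M_\Phi$; this is a contraction because $\Phi\in\clm_1(\bH_m(\B^n,\cle),\bH_p(\B^n,\cle_*))$. Since $P_{\clq_i}=I$ and a multiplier commutes with each shift, the intertwining hypothesis $X(P_{\clq_1}M_{z_i}|_{\clq_1})=(P_{\clq_2}M_{z_i}|_{\clq_2})X$ is automatically satisfied. Theorem~\ref{ILT for p>m} then tells us that $\Phi$ factors as $\Phi_2\Phi_1$ with $\Phi_1$ a Schur-Agler function and $\Phi_2$ a co-isometric multiplier precisely when the positivity \eqref{positivity 3}, namely $(1-\sigma_S)^{m-1}(I-M_\Phi M_\Phi^*)\ge 0$ with $S=(M_{z_1},\dots,M_{z_n})$ on $\bH_p(\B^n,\cle_*)$, holds. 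Thus the whole theorem reduces to showing that this positivity is equivalent to the membership $\Phi\in\clm_1(\bH_1(\B^n,\cle),\bH_{p-m+1}(\B^n,\cle_*))$.

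To establish that equivalence I would evaluate the quadratic form of $(1-\sigma_{M_z})^{m-1}(I-M_\Phi M_\Phi^*)$ on the dense set of finite linear combinations $\sum_i K_p(\cdot,\w_i)\eta_i$ of kernel sections, exactly as in the displayed computation preceding the statement. The mechanism is the same ``lowering'' identity already used in the proof of the first commutant lifting theorem of this section: acting by $(1-\sigma_{M_z})^{m-1}$ inserts a factor $(1-\langle\w_i,\w_j\rangle)^{m-1}$ into the reproducing-kernel inner products, which converts the weight $K_p$ into $K_{p-m+1}$. For the second term one first uses $M_\Phi^* K_p(\cdot,\w)\eta=K_m(\cdot,\w)\Phi(\w)^*\eta$, after which the same lowering identity turns $K_m$ into $K_1$. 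The upshot is that the form equals $\sum_{i,j}\langle(K_{p-m+1}(\w_i,\w_j)I_{\cle_*}-K_1(\w_i,\w_j)\Phi(\w_i)\Phi(\w_j)^*)\eta_i,\eta_j\rangle$, so \eqref{positivity 3} is equivalent to positive semidefiniteness of the function $(\z,\w)\mapsto K_{p-m+1}(\z,\w)I_{\cle_*}-K_1(\z,\w)\Phi(\z)\Phi(\w)^*$.

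Finally I would invoke the standard Kolmogorov-Aronszajn characterization of contractive multipliers: computing $\langle(I-M_\Phi M_\Phi^*)g,g\rangle$ for $g=\sum_i K_{p-m+1}(\cdot,\w_i)\eta_i$ and again using $M_\Phi^* K_{p-m+1}(\cdot,\w)\eta=K_1(\cdot,\w)\Phi(\w)^*\eta$, one sees that $M_\Phi$ is a contraction from $\bH_1(\B^n,\cle)$ into $\bH_{p-m+1}(\B^n,\cle_*)$ if and only if this very function is positive semidefinite. Chaining the three equivalences closes the biconditional, and the inclusion $\clm_1(\bH_1(\B^n,\cle),\bH_{p-m+1}(\B^n,\cle_*))\subseteq\clm_1(\bH_m(\B^n,\cle),\bH_p(\B^n,\cle_*))$ recorded just before the statement guarantees that such a $\Phi$ indeed lies in the ambient multiplier class. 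I expect no genuinely new obstacle: the only technical point is the kernel bookkeeping in the middle paragraph, and that is essentially a reprise of the computation already carried out in the proof of the first theorem of this section, so the argument is chiefly a matter of assembling the pieces.
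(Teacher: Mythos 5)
Your proposal is correct and follows essentially the same route as the paper: the authors also obtain the theorem by specializing Theorem~\ref{ILT for p>m} to $\clq_1=\bH_m(\B^n,\cle)$, $\clq_2=\bH_p(\B^n,\cle_*)$ with $X=M_\Phi$, and then evaluate the quadratic form of $(1-\sigma_{M_z})^{m-1}(I-M_\Phi M_\Phi^*)$ on kernel sections to identify the positivity condition \eqref{positivity 3} with positive semidefiniteness of $(\z,\w)\mapsto K_{p-m+1}(\z,\w)I_{\cle_*}-K_1(\z,\w)\Phi(\z)\Phi(\w)^*$, i.e.\ with $\Phi\in\clm_1(\bH_1(\B^n,\cle),\bH_{p-m+1}(\B^n,\cle_*))$. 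The kernel bookkeeping you outline, including the use of $M_\Phi^*K_p(\cdot,\w)\eta=K_m(\cdot,\w)\Phi(\w)^*\eta$ and the inclusion of multiplier classes recorded before the statement, matches the paper's displayed computation.
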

We end the section with the remark that the above factorization of multipliers 
is related to the one obtained in ~\cite[Theorem 4.2]{BhDS}. 

\newsection{Commutant lifting in polydisc}\label{Commutant lifting in D^n}

 In this section, we prove intertwining lifting theorem for weighted Bergman spaces over $\D^n$. We begin by defining hypercontractions in this setting and their canonical models. 
Recall that, for each $\bm {\gamma}=( {\gamma}_1,\ldots, {\gamma}_n) \in \N^n$ the reciprocal of the kernel of $A^2_{\bm {\gamma}}(\D^n,\C)$ is a polynomial with the expression 
\[ K_{\bm {\gamma}}^{-1}(z,w)= \Pi_{i=1}^n (1 - z_i\bar{w}_i)^{ {\gamma}_i} = \sum_{\bm k \leq \bm {\gamma}} (-1)^{|\bm k|} \rho_{\bm {\gamma}}(\bm k) \bm z^{\bm k} \bar{\bm w}^{\bm k} \quad (\bm k \in \Z_+^n),  \]
where $\rho_{\bm\gamma}(\bm k)$ is as in ~\eqref{rho for polydisc} and 
$\bm k\le \bm\gamma$ if $k_i\le \gamma_i$ for all $i=1,\dots,n$.
By using Agler's hereditary functional calculus, for every $n$-tuple of commuting contractions $T \in \clb(\clh)^n$ we set 
\[ K_{\bm {\gamma}}^{-1}(T,T^*): = \sum_{\bm k \leq \bm {\gamma}} (-1)^{|\bm k|} \rho_{\bm {\gamma}}(\bm k) T^{\bm k} T^{*\bm k}.
\]
With the above functional calculus, an $n$-tuples of commuting contractions $T \in \clb(\clh)^n$ is a $\bm \gamma$-hypercontraction if 
$K_{\bm {\gamma}}^{-1}(T,T^*) \geq 0$ and $T$ is pure if each $T_i$
is pure for all $i=1,\dots,n$. 
For every $\bm \gamma$-hypercontraction $T$ on $\clh$, the defect operator and defect spaces are defined as 
\[D_{\bm {\gamma}, T^*}:= K_{\bm {\gamma}}^{-1}(T,T^*)^{\frac{1}{2}}\ \text{and } \cld_{\bm {\gamma}, T^*}:= \overline{\text{ran}}\,\, K_{\bm {\gamma}}^{-1}(T,T^*)^{\frac{1}{2}},
\]
 respectively. A pure $\bm\gamma$-hypercontraction $T$ on $\clh$ dilates to the weighted shift $(M_{z_1}, \ldots,M_{z_n})$ on $A^2_{\bm {\gamma}}(\D^n,\cld_{\bm {\gamma},T^*})$ (see \cite{CV1}) via the canonical dilation map \  $\pi_{T} : \clh \to A^2_{\bm {\gamma}}(\D^n, \cld_{\bm {\gamma},T^*})$, defined by 
\begin{equation}\label{dilation map for bergman tuple in disk set up} 
(\pi_{T} h)(\bm z) = \sum_{\bm k \in \Z_+^n}\rho_{\bm {\gamma}}(\bm k)(D_{\bm {\gamma},T^*} T^{* \bm k}h) {\bm z}^{\bm k}  \quad \quad (h \in\clh, \bm z \in \D^n). 
\end{equation}
The map $\pi_T$ is an isometry and satisfies the intertwining property 
\[ \pi_{T} T_i^* = M_{z_i}^* \pi_{T} \quad \quad (i=1,\ldots,n) .\]
The adjoint of the dilation map $\pi_T$ has the following explicit action on monomials.  
\begin{Lemma}\label{l2}
For any $\eta \in \cld_{\bm{\gamma},T^*}$ and $\bm{p}\in\Z_+^{n}$,
\[
\pi_{T}^*(\z^{\bm{p}}\eta)=T^{\bm{p}}D_{\bm{\gamma},T^*} \eta.
\]
\end{Lemma}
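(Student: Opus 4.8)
The plan is to verify the identity by pairing $\pi_T^*(\z^{\bm p}\eta)$ against an arbitrary vector $h\in\clh$ and pushing the adjoint back onto $\pi_T$, so that the whole computation reduces to reading off a single Taylor coefficient in the concrete series model of $A^2_{\bm\gamma}(\D^n,\cld_{\bm\gamma,T^*})$. Since $\pi_T$ is an isometry, $\pi_T^*$ is a well-defined bounded operator into $\clh$, and it suffices to check the formula weakly, that is, to show that $\langle \pi_T^*(\z^{\bm p}\eta),h\rangle_{\clh}=\langle T^{\bm p}D_{\bm\gamma,T^*}\eta,h\rangle_{\clh}$ for every $h\in\clh$.

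First I would recall that in the series description of $A^2_{\bm\gamma}(\D^n,\cld_{\bm\gamma,T^*})$ the inner product of $f=\sum_{\bm k}a_{\bm k}\z^{\bm k}$ and $g=\sum_{\bm k}b_{\bm k}\z^{\bm k}$ is $\sum_{\bm k}\rho_{\bm\gamma}(\bm k)^{-1}\langle a_{\bm k},b_{\bm k}\rangle$. Thus the monomial $\z^{\bm p}\eta$ has $\eta$ as its coefficient at index $\bm k=\bm p$ and $0$ elsewhere, while by \eqref{dilation map for bergman tuple in disk set up} the coefficient of $\pi_T h$ at index $\bm k$ is $\rho_{\bm\gamma}(\bm k)D_{\bm\gamma,T^*}T^{*\bm k}h$. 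Using these two facts together with the adjoint relation gives
\[
\langle \pi_T^*(\z^{\bm p}\eta),h\rangle_{\clh}
=\langle \z^{\bm p}\eta,\pi_T h\rangle_{A^2_{\bm\gamma}}
=\frac{1}{\rho_{\bm\gamma}(\bm p)}\big\langle \eta,\rho_{\bm\gamma}(\bm p)D_{\bm\gamma,T^*}T^{*\bm p}h\big\rangle
=\langle \eta,D_{\bm\gamma,T^*}T^{*\bm p}h\rangle,
\]
where the weight $\rho_{\bm\gamma}(\bm p)$ cancels precisely because of the reciprocal normalisation in the series norm.

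Finally, I would use that $D_{\bm\gamma,T^*}$ is self-adjoint and that $T=(T_1,\dots,T_n)$ is a commuting tuple, so $(T^{\bm p})^*=T^{*\bm p}$, to move these operators across the inner product: $\langle \eta,D_{\bm\gamma,T^*}T^{*\bm p}h\rangle=\langle D_{\bm\gamma,T^*}\eta,T^{*\bm p}h\rangle=\langle T^{\bm p}D_{\bm\gamma,T^*}\eta,h\rangle$, which is exactly the desired right-hand side. Since $h\in\clh$ was arbitrary, the operator identity $\pi_T^*(\z^{\bm p}\eta)=T^{\bm p}D_{\bm\gamma,T^*}\eta$ follows.

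There is no substantial obstacle here; the only point that demands a little care is the bookkeeping of the weight $\rho_{\bm\gamma}(\bm k)$, namely making sure that the factor built into the definition of $\pi_T$ cancels exactly the reciprocal weight in the series inner product, leaving the clean expression above. One could equivalently run the computation through the reproducing kernel, but the coefficient-matching argument is the most transparent and avoids any convergence subtleties.
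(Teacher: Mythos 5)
Your proof is correct and follows essentially the same route as the paper: both compute $\langle \pi_T^*(\z^{\bm p}\eta),h\rangle$ by pairing $\z^{\bm p}\eta$ against the series expansion of $\pi_T h$ and reading off the $\bm p$-th coefficient, with the weight $\rho_{\bm\gamma}(\bm p)$ cancelling against the reciprocal weight in the norm. You simply make explicit the intermediate bookkeeping that the paper leaves as a one-line "straightforward calculation."
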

\textit{Proof.}
The proof follows from the following straightforward calculation: 
\begin{align*}
 \langle \pi_T^*(z^{\bm p}\eta),h\rangle = \langle z^{\bm p}\eta,
 \sum_{\bm k \in \Z_+^n}\rho_{\bm {\gamma}}(\bm k)(D_{\bm {\gamma},T^*} T^{* \bm k}h) {\bm z}^{\bm k}\rangle =
 \langle T^{\bm{p}}D_{\bm{\gamma},T^*} \eta, h\rangle
\end{align*}
for all $h\in\clh$.
    \qed                                                              

Now we work towards the intertwining lifting theorem and this is why,
\textit{ for the remaining of this section
we fix} $\bm \gamma \in \mathbb N^n$ and pure 
$\bm \gamma$-hypercontractions $T=(T_1,\ldots,T_n)$ and $S=(S_1,\ldots,S_n)$ on $\clh$ and $\clk$ respectively.
Suppose that there is an operator $X\in \clb(\clh, \clk)$ with $XT_i=S_iX$ for all $i=1,\dots, n$. Also assume that there exist positive operators $F_1,\ldots,F_{n}$ in $\clb(\clk)$ and
a unitary
\begin{equation}\label{unitary in D^n}
U =\begin{bmatrix}A &B\\C&D\end{bmatrix}:
\cld_{\bm{\gamma},S^*}\oplus(\mathop{\oplus}_{i=1}^{n}\clf_i\oplus \cll)\to
\cld_{\bm{\gamma}, T^*}\oplus(\mathop{\oplus}_{i=1}^{n}\clf_i\oplus\cll)
\end{equation} such that
\begin{equation}
\label{generating identity}
U(D_{\bm{\gamma},S^*}k,F_1S_1^*k,\ldots, F_{n}S_{n}^*k, 0_{\cll})=(D_{\bm{\gamma},T^*}X^*k,F_1k,\ldots, F_{n}k,0_{\cll}),
\ (k\in\clk)
\end{equation}
where $\clf_i=\overline{\text{ran}} F_i$ for all $i=1,\dots,n$,
and $\cll$ is a Hilbert space.
We set
$\clf:=\oplus_{i=1}^{n}\clf_i\oplus\cll$ and decompose $B=(B_1,\ldots,B_n)$ and $D=(D_1,\ldots,D_n)$ 
with 
\[
B_i: \clf_i\to \cld_{\bm\gamma, T^*}\  (1\le i\le n-1)\ 
\text{ and } B_n: \clf_n\oplus\cll\to \cld_{\bm \gamma, T^*},
\]
and 
\[
D_i: \clf_i\to \clf\ (1\le i\le n-1)\  \text{ and }
D_n: \clf_n\oplus \cll\to \clf.
\]
Also define operators $Q,R
\in \clb(\clk, \clf)$ by
\begin{equation}\label{Q and R}
Q(k)=(F_1k,\ldots, F_{n}k,0_{\cll}),
\text{ and }
R(k)=(F_1S_1^*k,\ldots, F_nS_n^*k,0_{\cll}),
\end{equation}
for all $k\in\clk$. Then by \eqref{generating identity},
we have the following two identities
\begin{equation}\label{id1}
D_{\bm{\gamma},T^*}X^*=AD_{\bm{\gamma},S^*}+BR,
\end{equation}
and
\begin{equation}\label{id2}
Q=CD_{\bm{\gamma},S^*}+DR.
\end{equation}
Let $\Phi$ be the transfer function of the unitary operator 
$U^*$, as in \eqref{unitary in D^n}, that is  
\begin{equation}\label{Phi}
\Phi(\z)=A^*+C^*(I_\clf-E(\z)D^*)^{-1}E(\z)B^* \quad \quad (\z\in\D^{n}).
\end{equation}
We also define a map $\Psi: \clf\to A^2_{\bm{\gamma}}(\D^{n},\cld_{\bm{\gamma}, S^*})$ by
 \begin{equation}\label{Psi}
 (\Psi x)(\z)=C^*(I-E(\bm z)D^*)^{-1}x \quad (x\in \clf).
 \end{equation}
Then, $\Psi=(\Psi_1,\ldots,\Psi_{n})$ is a row operator with $\Psi_i: \clf_i\to A^2_{\bm{\gamma}}(\D^{n},\cld_{\bm{\gamma}, S^*})$ for all $i=1,\ldots,n-1${\tiny } and $\Psi_{n}: \clf_{n}\oplus\cll\to A^2_{\bm{\gamma}}(\D^{n},\cld_{\bm{\gamma}, S^*})$. 
In the next two lemmas, we describe several properties of $\Phi$ 
and $\Psi$ which do the heavy lifting of the theorem following it. 
In this context we use canonical embedding, for an arbitrary Hilbert space $\cle$, $\iota_{\cle}:\cle\hookrightarrow A^2_{\bm{\gamma}}(\D^n,\cle)$ defined by
\[
(\iota_{\cle}\eta)(\z)=1\otimes \eta \ \text{for all}\,\, \eta\in\cle.
\]
The adjoint of the embedding satisfies  
\begin{equation}\label{Q*}
\iota_{\cle}^*f=f(0)\quad \text{for all}\,\, f\in A^2_{\bm{\gamma}}(\D^n,\cle).
\end{equation}
\begin{Lemma}\label{pro-psi}
Let $\Phi$, $\Psi$ and the unitary $U$ be as above. Then $\Psi$ is a contraction and satisfies
\[
\Psi=\iota_{\cld_{\bm{\gamma},S^*}}C^*+\sum_{j=1}^{n}M_{z_j}\Psi_jD_j^*	\quad \text{and} \quad
M_{\Phi}\iota_{\cld_{\bm{\gamma}, T^*}}=\iota_{\cld_{\bm{\gamma},S^*}}A^*+\sum_{j=1}^{n}M_{z_j}\Psi_jB_j^*, \]	
where $A, B=(B_1,\ldots,B_n), C$ and $D=(D_1,\ldots,D_n)$ are as in \eqref{unitary in D^n}.
\end{Lemma}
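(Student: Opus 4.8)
My plan is to separate the lemma into its three claims and treat them in increasing order of difficulty: the two operator identities are formal consequences of the resolvent identity together with the block structure of $B=(B_1,\dots,B_n)$ and $D=(D_1,\dots,D_n)$ and the diagonal action of $E(\z)$, whereas the contractivity of $\Psi$ is the real content and must be squeezed out of the unitarity of $U$ by a reproducing-kernel argument. The single fact used repeatedly for the identities is that $E(\z)$ acts as multiplication by $z_i$ on the $i$-th summand of $\clf=\oplus_{i=1}^n\clf_i\oplus\cll$ (with the block $\clf_n\oplus\cll$ carrying $z_n$), so that for any $\eta$ one has $E(\z)B^*\eta=\sum_{j} z_jB_j^*\eta$ and $E(\z)D^*x=\sum_j z_jD_j^*x$, the $j$-th term living in the $j$-th block, which is precisely the domain of $\Psi_j$.

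For the identity $M_{\Phi}\iota_{\cld_{\bm{\gamma},T^*}}=\iota_{\cld_{\bm{\gamma},S^*}}A^*+\sum_j M_{z_j}\Psi_jB_j^*$ I would evaluate on $\eta\in\cld_{\bm{\gamma},T^*}$ at a point $\z$, using \eqref{Phi}:
\[
(M_\Phi\iota_{\cld_{\bm{\gamma},T^*}}\eta)(\z)=\Phi(\z)\eta=A^*\eta+C^*(I-E(\z)D^*)^{-1}E(\z)B^*\eta.
\]
The constant $A^*\eta$ is $(\iota_{\cld_{\bm{\gamma},S^*}}A^*\eta)(\z)$, while pulling the scalars $z_j$ out of $E(\z)B^*\eta$ and using $(\Psi_jw)(\z)=C^*(I-E(\z)D^*)^{-1}w$ for $w$ in the $j$-th block turns the second summand into $\sum_j z_j(\Psi_jB_j^*\eta)(\z)$, which is the right-hand side. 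The identity for $\Psi$ is the same computation run through the resolvent identity $(I-E(\z)D^*)^{-1}=I+(I-E(\z)D^*)^{-1}E(\z)D^*$: applying $C^*$, the summand $I$ yields the constant function $\iota_{\cld_{\bm{\gamma},S^*}}C^*x$, and the second summand, after the same pull-out, becomes $\sum_j z_j(\Psi_jD_j^*x)(\z)=\sum_j(M_{z_j}\Psi_jD_j^*x)(\z)$.

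The contractivity I would prove by passing to the adjoint and testing against the reproducing kernel of $A^2_{\bm{\gamma}}(\D^n,\cld_{\bm{\gamma},S^*})$. A direct reproducing-kernel computation gives $\Psi^*(K_{\bm{\gamma}}(\cdot,\w)\zeta)=(I-DE(\w)^*)^{-1}C\zeta$, so that $\|\Psi\|\le 1$ is equivalent, by density of finite sums of kernel sections, to the positivity of
\[
\sum_{p,q}\Big(K_{\bm{\gamma}}(\w_q,\w_p)\langle\zeta_p,\zeta_q\rangle-\langle g_p,g_q\rangle\Big),\qquad g_p:=(I-DE(\w_p)^*)^{-1}C\zeta_p,
\]
for all finite families $\{(\w_p,\zeta_p)\}$. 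Here the unitarity of $U$ enters through a lurking-isometry identity: writing $f_p:=E(\w_p)^*g_p$ one has $g_p=C\zeta_p+Df_p$, hence $U\bigl[\begin{smallmatrix}\zeta_p\\ f_p\end{smallmatrix}\bigr]=\bigl[\begin{smallmatrix}A\zeta_p+Bf_p\\ g_p\end{smallmatrix}\bigr]$, and isometry of $U$ gives
\[
\langle\zeta_p,\zeta_q\rangle-\langle g_p,g_q\rangle=\langle A\zeta_p+Bf_p,\,A\zeta_q+Bf_q\rangle-\langle f_p,f_q\rangle,
\]
with $\langle f_p,f_q\rangle=\sum_{i=1}^n (w_q)_i\overline{(w_p)_i}\,\langle P_ig_p,P_ig_q\rangle$, where $P_i$ projects $\clf$ onto its $i$-th block.

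The hard part is that this single identity only carries the trivial weight $1$, whereas the required positivity involves the full product weight $K_{\bm{\gamma}}(\w_q,\w_p)=\prod_{i=1}^n(1-(w_q)_i\overline{(w_p)_i})^{-\gamma_i}$; moreover $(M_{z_1},\dots,M_{z_n})$ on $A^2_{\bm{\gamma}}(\D^n)$ is \emph{not} a row contraction, so no naive telescoping of the lurking-isometry identity can close the gap. My plan here is to iterate the identity coordinate by coordinate, matching the power-series expansion of $\langle g_p,g_q\rangle$ in $(DE(\w_p)^*)^k$ against the expansion $K_{\bm{\gamma}}(\w_q,\w_p)=\sum_{\bm k}\rho_{\bm{\gamma}}(\bm k)\,\w_q^{\bm k}\overline{\w_p^{\bm k}}$ (with $\rho_{\bm{\gamma}}$ as in \eqref{rho for polydisc}) and using the relations among $A,B,C,D$ to telescope the difference into a manifestly positive form. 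The powers $\gamma_i$ and the auxiliary space $\cll$ grouped with the $n$-th coordinate in the block decompositions of $B$ and $D$ are exactly the bookkeeping that makes this resummation balance; producing it, as an appropriate modification of the polydisc technique of \cite{BLTT} for the weighted Bergman kernel, is the step I expect to be the main obstacle, and once it is in place the positivity above, hence $\|\Psi\|\le 1$, follows.
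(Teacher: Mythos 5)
Your derivations of the two operator identities are correct and are essentially the paper's own computation: the resolvent identity $(I-E(\z)D^*)^{-1}=I+(I-E(\z)D^*)^{-1}E(\z)D^*$ for the first, and the definition \eqref{Phi} together with the diagonal action of $E(\z)$ on the blocks of $\clf$ for the second. No issue there.

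The contractivity of $\Psi$, however, is left genuinely open in your write-up. You correctly reduce $\|\Psi\|\le 1$ to the positivity of $\sum_{p,q}\big(K_{\bm{\gamma}}(\w_q,\w_p)\langle\zeta_p,\zeta_q\rangle-\langle g_p,g_q\rangle\big)$ and correctly extract the weight-one lurking-isometry identity from the unitarity of $U$, but you then state explicitly that the resummation needed to pass from the unweighted identity to the $K_{\bm{\gamma}}$-weighted positivity is ``the step I expect to be the main obstacle'' and you do not produce it. As it stands this is a missing step, not a proof. Moreover, the route you sketch is harder than necessary: the paper disposes of contractivity in one line by citing Lemma 3.2 of \cite{BLTT}, which gives that the same formula defines a contraction $\clf\to H^2_{\cld_{\bm{\gamma},S^*}}(\D^n)$ (the Hardy-space case $\bm{\gamma}=(1,\dots,1)$, where the coordinate-by-coordinate telescoping of the lurking-isometry identity does close). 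Since $\rho_{\bm{\gamma}}(\bm k)=\prod_i\binom{\gamma_i+k_i-1}{k_i}\ge 1$ for every $\bm k$, the inclusion $H^2_{\cle}(\D^n)\hookrightarrow A^2_{\bm{\gamma}}(\D^n,\cle)$ is contractive, so contractivity into the weighted Bergman space follows immediately from the Hardy-space statement; no rebalancing of the expansion of $K_{\bm{\gamma}}$ against powers of $DE(\w)^*$ is needed, and the exponents $\gamma_i$ play no role in this part of the argument. I recommend you either invoke the Hardy-space lemma plus the contractive inclusion, or, if you want a self-contained proof, carry out the telescoping only for the unweighted kernel and then use the monotonicity of the norms.
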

\textit{Proof.} The contractivity of the row operator $\Psi$ follows from Lemma 3.2 of \cite{BLTT}. For the first identity, note that  
for all $x=(x_1,\dots,x_{n}) \in \clf$ and $\z \in \D^n$,
\begin{align*}
(\Psi x)(\z)&= C^*(I-E(\z)D^*)^{-1}(I-E(\z)D^*+E(\z)D^*)x\\
&= C^*x+(\Psi E(\z)D^*x)(\z)\\
&= (\iota_{\cld_{\bm{\gamma}, S^*}}C^*x)(\z)+
\sum_{j=1}^nz_j(\Psi_jD_j^*x)(\z)\\
&= (\iota_{\cld_{\bm{\gamma}, S^*}}C^*x)(\z)+(\sum_{j=1}^{n}M_{z_j}\Psi_jD_j^*x_j)(\z),
\end{align*}
and therefore the first identity follows.
On the other hand a similar calculation, for $\eta \in \cld_{\bm{\gamma}, T^*}$ and $z \in \D^n$, also shows that
\begin{align*}	
(M_{\Phi}\iota_{\cld_{\bm{\gamma}, T^*}}\eta)(\z) &= (\iota_{\cld_{\bm{\gamma}, S^*}}A^* \eta)(\z)+(\Psi E(\z)B^*\eta)(\z)\\
&= (\iota_{\cld_{\bm{\gamma}, S^*}}A^*\eta)(\z)+(\sum_{j=1}^{n}M_{z_j}\Psi_jB_j^*\eta)(\z).
\end{align*} 
This completes the proof.	                                    \qed

We need to fix some notations for the next lemma. The conjugacy map on $\clb(\clh)$ corresponding to an operator $X$ on $\clh$ is the completely positive map $C_X: \clb(\clh) \to \clb(\clh)$, defined by 
\[C_X(A) = XAX^*,\  (A\in \clb(\clh)).
\] 
For any $N \in \N$, using the above conjugacy map, we define a linear operator 
$\varSigma_{X}^N : \clb(\clh)\to\clb(\clh)$ by 
\[
\varSigma_{X}^N(A)=\sum_{k=0}^{N-1} C_X^k(A)=\Sigma_{k=0}^{N-1}X^kAX^{*k} \  ( A\in\clb(\clh)),
\] 
and this is then generalized for an $n$-tuple of operators $T=(T_1,\ldots,T_n)\in\clb(\clh)^n$  to define $\varSigma_{T}^N : \clb(\clh)\to\clb(\clh)$ 
by
\[
\varSigma_T^N(A)=\prod_{j=1}^{n}\varSigma_{T_j}^N(A) \quad\quad\quad (A\in\clb(\clh)).
\] 
For an operator $A\ge0$, it is easy to observe that $\big(\varSigma_T^N(A)\big)_{N=1}^{\infty}$ is an increasing sequence of positive operators and whenever the sequence converges in the strong operator topology, we denote the limit by $\varSigma_T(A)$. The above definitions show that 
\begin{equation}\label{identity}
\varSigma_T^N\prod_{{j=1}}^{n}(I_{\mathcal{B}(\clh)}-
C_{T_j})(A)=\prod_{{j=1}}^{n}(I_{\mathcal{B}(\clh)}-
C_{T_j})\big(\varSigma_T^N(A)\big)=\prod_{{j=1}}^{n}(I_{\mathcal{B}(\clh)}-
C_{T_j^N})(A).
\end{equation}
Another notation we use, for an $n$-tuple of commuting contractions $T=(T_1,\ldots,T_n)$ and for any $1\le i\le n$, is $\hat{T}_i$ which denotes the $(n-1)$-tuple $(T_1,\ldots,T_{i-1},T_{i+1},\ldots,T_n)$ obtained from $T$ by removing $T_i$.  

\begin{Lemma}\label{psipsi*} Let $\bm{\gamma}\in\N^n$. Let $\cle$ be a Hilbert space and $M_{z}=(M_{z_1},\ldots,M_{z_n})$ be the weighted Bergman shift on $A^2_{\bm{\gamma}}(\D^n,\cle)$. Suppose, there is a Hilbert space $\clh$ such that the operator $\Psi=(\Psi_1,\ldots,\Psi_n):\clh^n \to A^2_{\bm{\gamma}}(\D^n,\cle)$ is bounded. If $\Phi$ is a multiplier on $A^2_{\bm{\gamma}}(\D^n,\cle)$ such that 
	\[
	\|(\iota_{\cle}^*f, \Psi_1^*M_{z_1}^*f,\ldots,\Psi_n^*M_{z_n}^*f)\|=\|(\iota_{\cle}^*M_{\Phi}^*f,\Psi_1^*f,\ldots,\Psi_n^* f)\|
	\]
	for all $f\in A^2_{\bm{\gamma}}(\D^n,\cle)$, then
	\[
	\varSigma_{\hat{M}_{z_j}}(\Psi_j\Psi_j^*)<\infty
	\]
	for all $j=1,\ldots,n$, where $\iota_{\cle}:\cle\hookrightarrow A^2_{\bm{\gamma}}(\D^n,\cle)$ is the embedding as above.
\end{Lemma}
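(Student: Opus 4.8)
The plan is to recast the norm identity as an operator identity, reduce to a single scalar convergence fact, and then break an apparent circularity with a polynomial-density argument. Write $E_0:=\iota_{\cle}\iota_{\cle}^*$, which is the orthogonal projection onto the constants since $\iota_{\cle}$ is an isometry, and set $P_i:=\Psi_i\Psi_i^*$. Squaring the assumed identity and using $\|\iota_{\cle}^* g\|^2=\langle E_0 g,g\rangle$, $\|\Psi_i^* M_{z_i}^* g\|^2=\langle C_{M_{z_i}}(P_i)g,g\rangle$ and $\|\iota_{\cle}^* M_{\Phi}^* g\|^2=\langle M_{\Phi}E_0 M_{\Phi}^* g,g\rangle$, I would first obtain, as operators on $A^2_{\bm{\gamma}}(\D^n,\cle)$,
\[
\sum_{i=1}^n (I-C_{M_{z_i}})(P_i)+M_{\Phi}E_0 M_{\Phi}^*=E_0 .
\]

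Next I would record the one convergence fact that drives everything: $\varSigma_{M_z}(E_0)$ exists and $\varSigma_{M_z}(E_0)\le I$, where $M_z=(M_{z_1},\dots,M_{z_n})$. Indeed $M_z^{\bm k}\iota_{\cle}\eta=\z^{\bm k}\otimes\eta$, so the operators $M_z^{\bm k}\iota_{\cle}$ have mutually orthogonal ranges, and a short computation shows $(M_z^{\bm k}\iota_{\cle})(M_z^{\bm k}\iota_{\cle})^*$ acts on $\z^{\bm k}\otimes\cle$ as the scalar $\rho_{\bm{\gamma}}(\bm k)^{-1}\le 1$; hence $\varSigma_{M_z}^N(E_0)=\sum_{\bm k\in\{0,\dots,N-1\}^n}(M_z^{\bm k}\iota_{\cle})(M_z^{\bm k}\iota_{\cle})^*$ is a block-diagonal increasing net bounded by $I$, and so converges strongly. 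Applying the multivariable telescoping calculus to the displayed identity, using \eqref{identity} together with $M_{\Phi}M_{z_i}=M_{z_i}M_{\Phi}$, I get the master identity
\[
\sum_{i=1}^n (I-C_{M_{z_i}^N})\,\varSigma_{\hat{M}_{z_i}}^N(P_i)+M_{\Phi}\varSigma_{M_z}^N(E_0)M_{\Phi}^*=\varSigma_{M_z}^N(E_0).
\]

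Writing $R_j^N:=\varSigma_{\hat{M}_{z_j}}^N(P_j)\ge 0$ (increasing in $N$), pairing this identity with a vector $f$ produces for each $i$ a ``slab'' term $\langle R_i^N M_{z_i}^{*N}f,M_{z_i}^{*N}f\rangle$. The main obstacle is precisely these slab terms: discarding them leads to a circular estimate, since their vanishing as $N\to\infty$ is essentially equivalent to the finiteness one is trying to prove, and $R_1^N,\dots,R_n^N$ are coupled through the master identity. The device that breaks the circularity is that the weighted Bergman shifts are pure in the sharp sense that if $f$ is a polynomial then $M_{z_i}^{*N}f=0$ once $N$ exceeds the degree of $f$ in $z_i$. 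Thus on the dense set of polynomials every slab term vanishes for $N$ large, so for such $f$ the master identity and $\varSigma_{M_z}^N(E_0)\le I$, together with $M_{\Phi}E_0 M_{\Phi}^*\ge 0$, give $\sum_{i=1}^n\langle R_i^N f,f\rangle\le\langle\varSigma_{M_z}^N(E_0)f,f\rangle\le\|f\|^2$; by monotonicity this bound then holds for every $N$, whence $\langle R_j^N f,f\rangle\le\|f\|^2$ for all polynomials $f$ and all $N$.

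Finally I would upgrade this to an operator bound. Each $R_j^N$ is a bounded operator, so its quadratic form is continuous, and since it is dominated by $\|f\|^2$ on the dense subspace of polynomials it is dominated by $\|f\|^2$ on all of $A^2_{\bm{\gamma}}(\D^n,\cle)$; hence $\|R_j^N\|\le 1$ uniformly in $N$. As $\big(R_j^N\big)_N$ is an increasing sequence of positive operators with uniformly bounded norms, it converges in the strong operator topology, which is exactly the assertion $\varSigma_{\hat{M}_{z_j}}(\Psi_j\Psi_j^*)<\infty$. I expect the only genuinely delicate point to be the passage through the slab terms just described; the remaining ingredients are the telescoping identities already available in this section (notably \eqref{identity}) and the elementary orthogonality computation for $\varSigma_{M_z}(E_0)$.
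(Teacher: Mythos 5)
Your argument is correct, and its skeleton coincides with the paper's: both convert the norm identity into the operator identity $\sum_{i=1}^n(I-C_{M_{z_i}})(\Psi_i\Psi_i^*)=\iota_{\cle}\iota_{\cle}^*-M_{\Phi}\iota_{\cle}\iota_{\cle}^*M_{\Phi}^*$, apply $\varSigma_{M_z}^N$ via the telescoping identity \eqref{identity}, kill the slab terms on a dense set of vectors annihilated by high powers of the $M_{z_j}^*$ (your polynomials are exactly the paper's $\clk_0=\cup_k\cap_j\ker M_{z_j}^{*k}$), and finish by monotonicity. The one genuine point of divergence is the handling of the right-hand side: the paper substitutes $\iota_{\cle}\iota_{\cle}^*=\prod_j(I-C_{M_{z_j}})^{\gamma_j}(I)$ and commutes $M_{\Phi}$ through, so that $\varSigma_{M_z}^N$ absorbs one factor of each $(I-C_{M_{z_j}})$ and the bound on $\clk_0$ becomes $\langle\prod_j(I-C_{M_{z_j}})^{\gamma_j-1}(I-M_{\Phi}M_{\Phi}^*)f,f\rangle$, whereas you keep $E_0=\iota_{\cle}\iota_{\cle}^*$ as the projection onto constants, establish $\varSigma_{M_z}^N(E_0)\le I$ by the orthogonality computation $M_z^{\bm k}\iota_{\cle}\iota_{\cle}^*M_z^{*\bm k}=\rho_{\bm\gamma}(\bm k)^{-1}P_{\z^{\bm k}\otimes\cle}$, and simply discard the positive term $M_{\Phi}\varSigma_{M_z}^N(E_0)M_{\Phi}^*$. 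Your version buys a sharper conclusion ($\varSigma_{\hat{M}_{z_j}}(\Psi_j\Psi_j^*)\le I$ rather than bare finiteness) and bypasses the kernel-specific hereditary identity for $\iota_{\cle}\iota_{\cle}^*$, so it would transfer to any kernel for which $\varSigma_{M_z}(E_0)$ is bounded; the paper's form keeps $I-M_{\Phi}M_{\Phi}^*$ visible, which is the more natural object in the surrounding section. You also make explicit the final step (uniform bound of the quadratic forms on a dense subspace, hence $\|R_j^N\|\le1$, hence strong convergence of an increasing bounded sequence) that the paper compresses into ``since $\clk_0$ is dense''; that step is genuinely needed and your rendering of it is the complete one.
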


\textit{Proof.}
The norm equality implies that
\begin{align*}
\sum_{j=1}^{n}\Psi_j\Psi_j^*-\sum_{j=1}^{n}M_{z_j}\Psi_j\Psi_j^*M_{z_j}^* &= \iota_{\cle}\iota_{\cle}^*-M_{\Phi}\iota_{\cle}\iota_{\cle}^*M_{\Phi}^*\\
&= \prod_{{j=1}}^{n}(I-
C_{M_{z_j}})^{\gamma_j}(I)-M_{\Phi}\prod_{{j=1}}^{n}(I-C_{M_{z_j}})^{\bm{\gamma}_{j}}(I)M_{\Phi}^*.
\end{align*}
Here we have used the fact that $\iota_{\cle}\iota_{\cle}^*$ is the projection 
on to the constant functions in $A^2_{\bm{\gamma}}(\D^n,\cle)$.
A reformulation of the above identity yields that
\[
\sum_{j=1}^{n}(I-C_{M_{z_j}})(\Psi_j\Psi_j^*)=\prod_{{j=1}}^{n}(I-C_{M_{z_j}})^{\gamma_j}(I-M_{\Phi}M_{\Phi}^*).
\]	
Applying $\varSigma_{M_{\z}}^N$ on the both sides of the above equality and using ~\eqref{identity}, we get
\begin{align*}
\sum_{j=1}^{n}\varSigma_{\hat{M}_{z_j}}^N(I-
C_{M_{z_j}^N})(\Psi_j\Psi_j^*) &= \varSigma_{{M}_{z}}^N\prod_{{j=1}}^{n}(I-
C_{M_{z_j}})^{\gamma_j}(I-M_{\Phi}M_{\Phi}^*)\\
&= \Big(\prod_{{j=1}}^{n}(I-
C_{M_{z_j}})^{\bm{\gamma}_{j-1}}\Big)\Big(\prod_{{j=1}}^{n}(I-
C_{M_{z_j}^N})\Big)(I-M_{\Phi}M_{\Phi}^*).
\end{align*}
Set
$\clk_0: =\cup_{k=0}^{\infty}\cap_{j=1}^{n}\text{Ker}\,M_{z_j}^{*k}$. 
Then for any $f\in\clk_0$ and for sufficiently large $N$, we have
\[
\sum_{j=1}^{n}\varSigma_{\hat{M}_{z_j}}^N(\Psi_j\Psi_j^*)f=\Big(\prod_{{j=1}}^{n}(I-
C_{M_{z_j}})^{\bm{\gamma}_{j-1}}\Big)(I-M_{\Phi}M_{\Phi}^*)f.
\]
 Since $\clk_0$ is dense in $A^2_{\bm{\gamma}}(\D^n,\cle)$, we get 
\[
\varSigma_{\hat{M}_{z_j}}(\Psi_j\Psi_j^*)<\infty,
\]
for all $j=1,\dots,n$. This completes the proof.
  \qed                                       

Using the above lemmas we now find a sufficient condition for 
intertwining lifting theorem in the case of pure $\bm \gamma$-hypercontractions. We separate out the sufficiency part from the 
main theorem of this section to make the proof short and to illustrate explicitness of the 
lifting. 

\begin{Theorem}\label{key theorem}
Let $\bm{\gamma}\in\N^n$.
Let $T=(T_1,\ldots,T_n)$ and $S=(S_1,\ldots,S_n)$ be two pure $\bm\gamma$-hypercontractions on $\clh$ and $\clk$ respectively. Suppose that there exist operators $F_1,\ldots,F_{n}$ in $\clb(\clk)$
with $\varSigma_{\hat{S}_{i}}(F_i^*F_i)$ exists for all $i=1,\ldots,n$, a contraction $X\in\clb(\clh,\clk)$ with $XT_i=S_iX$ for all $i=1,\ldots,n$ and a unitary 
$U:
\cld_{\bm{\gamma}, S^*}\oplus\clf\to
\cld_{\bm{\gamma}, T^*}\oplus \clf $ satisfying
\begin{equation*}
U(D_{\bm{\gamma}, S^*}k,F_1S_1^*k,\ldots, F_{n}S_{n}^*k,0_{\cll})=(D_{\bm{\gamma}, T^*}X^*k,F_1k,\ldots, F_{n}k,0_{\cll})\
\quad \quad (k\in\clk),
\end{equation*}
where $\clf_i=\overline{\text{ran}} F_i$ $(i=1,\dots, n)$ and $\clf:=\oplus_{i=1}^{n-1}\clf_i\oplus(\clf_n\oplus\cll)$ for some Hilbert space $\cll$.
Then
\[
\pi_{T} X^*=M_{\Phi}^*\pi_{S},
\]
where $\Phi\in \cls\cla(\D^n, \clb(\cld_{\bm{\gamma}, T^*},\cld_{\bm{\gamma}, S^*}))$ is the transfer function of $U^*$ and 
$\pi_T$ and $\pi_S$ are the canonical dilation maps of $T$ and $S$, respectively.

\end{Theorem}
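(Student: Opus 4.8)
The plan is to verify the operator identity $\pi_T X^*=M_\Phi^*\pi_S$ by testing it against the total set of vectors $\z^{\bm{p}}\eta$ $(\bm{p}\in\Z_+^n,\ \eta\in\cld_{\bm{\gamma},T^*})$ and reducing everything to a single statement about $\Psi$, $\pi_S$ and the data $F_j$. Fix $k\in\clk$, $\bm{p}$ and $\eta$. Using Lemma~\ref{l2} and the intertwining $XT_i=S_iX$, the left-hand side becomes $\langle\pi_TX^*k,\z^{\bm{p}}\eta\rangle=\langle k,S^{\bm{p}}XD_{\bm{\gamma},T^*}\eta\rangle$, and substituting the adjoint of \eqref{id1}, namely $XD_{\bm{\gamma},T^*}=D_{\bm{\gamma},S^*}A^*+R^*B^*$, splits it into an ``$A^*$-part'' and an ``$R^*B^*$-part''. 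For the right-hand side I would first note $M_\Phi(\z^{\bm{p}}\eta)=M_z^{\bm{p}}M_\Phi\iota_{\cld_{\bm{\gamma},T^*}}\eta$ and feed in the second identity of Lemma~\ref{pro-psi}; using $M_{z_j}^*\pi_S=\pi_S S_j^*$ and Lemma~\ref{l2} once more, the ``$A^*$-parts'' of the two sides match verbatim. What remains after this bookkeeping is the requirement $\Psi_j^*\pi_S=F_j$ for every $j$, that is $\Psi^*\pi_S=Q$ with $Q$ as in \eqref{Q and R}; this is the single identity I would aim to prove.

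To attack $\Psi^*\pi_S=Q$, take adjoints in the first identity of Lemma~\ref{pro-psi} to get $\Psi^*=C\iota_{\cld_{\bm{\gamma},S^*}}^*+\sum_j D_j\Psi_j^*M_{z_j}^*$. Composing with $\pi_S$ and using $\iota_{\cld_{\bm{\gamma},S^*}}^*\pi_S=D_{\bm{\gamma},S^*}$ (immediate from \eqref{Q*} and the formula \eqref{dilation map for bergman tuple in disk set up} for $\pi_S$, since $\rho_{\bm{\gamma}}(\bm 0)=1$) together with $M_{z_j}^*\pi_S=\pi_S S_j^*$ yields $\Psi^*\pi_S=CD_{\bm{\gamma},S^*}+\sum_j D_j(\Psi_j^*\pi_S)S_j^*$. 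Since \eqref{id2} reads $Q=CD_{\bm{\gamma},S^*}+\sum_j D_jF_jS_j^*$, the error $G:=\Psi^*\pi_S-Q$ obeys the self-referential relation $G=\sum_j D_j(P_jG)S_j^*$, where $P_j$ is the orthogonal projection of $\clf$ onto its $j$-th summand. The whole theorem thus comes down to showing that this fixed-point relation forces $G=0$.

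The convergence input is supplied by Lemma~\ref{psipsi*}, whose hypothesis I would first check is automatic here: applying the unitary $U$ of \eqref{unitary in D^n} to the vector $(\iota_{\cld_{\bm{\gamma},S^*}}^*f,\Psi_1^*M_{z_1}^*f,\dots,\Psi_n^*M_{z_n}^*f)$ and reading off the two identities of Lemma~\ref{pro-psi} shows that $U$ sends it exactly to $(\iota_{\cld_{\bm{\gamma},T^*}}^*M_\Phi^*f,\Psi_1^*f,\dots,\Psi_n^*f)$, so the required norm equality holds and $\varSigma_{\hat{M}_{z_j}}(\Psi_j\Psi_j^*)<\infty$ for all $j$. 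The intertwining $\pi_S S_i^*=M_{z_i}^*\pi_S$ transports this to $\varSigma_{\hat{S}_j}\big((\Psi_j^*\pi_S)^*(\Psi_j^*\pi_S)\big)<\infty$; combined with the standing hypothesis $\varSigma_{\hat{S}_j}(F_j^*F_j)<\infty$ and Minkowski's inequality for these increasing sums, one gets the transverse summability $\varSigma_{\hat{S}_j}(G_j^*G_j)<\infty$, where $G_j=P_jG$.

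Finally I would iterate $G=\sum_j D_j(P_jG)S_j^*$ and let the number of steps tend to infinity. The defect relation $B^*B+D^*D=I$ coming from the unitarity of $U$ makes the column $(D_jP_j)_j$ contractive, and each $S_j$ is pure in its own coordinate; feeding these into the iterate, together with the bounds $\varSigma_{\hat{S}_j}(G_j^*G_j)<\infty$ to control the directions transverse to the coordinate being consumed at each step, should drive the remainder term to $0$ in the strong operator topology, giving $G=0$ and hence $\pi_TX^*=M_\Phi^*\pi_S$, with $\Phi\in\cls\cla(\D^n,\clb(\cld_{\bm{\gamma},T^*},\cld_{\bm{\gamma},S^*}))$ by Theorem~\ref{char Schur-Agler class in D^n}. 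The hard part is precisely this last step. Unlike the ball case of Lemma~\ref{operator identity in ball set up}, where purity of a row contraction makes $\sigma_S^N(I)\to0$ and kills the tail outright, in the polydisc the naive estimate produces binomial growth and need not tend to zero; the vanishing of the iterate must instead be extracted coordinate by coordinate from the $\varSigma$-summability, which is the appropriate modification of the technique of~\cite{BLTT} on which the argument hinges.
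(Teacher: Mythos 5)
Your reduction coincides with the paper's proof up to the decisive last step. Testing $\pi_TX^*=M_\Phi^*\pi_S$ on the total set $\{\z^{\bm p}\eta\}$ via Lemmas~\ref{l2} and~\ref{pro-psi} and the identities \eqref{id1}--\eqref{id2}, and reducing everything to $\Psi_j^*\pi_S=F_j$, is exactly what the paper does (in adjoint form): your $G_j=\Psi_j^*\pi_S-F_j$ is $-\Gamma_j^*$ for the paper's $\Gamma_j=F_j^*-\pi_S^*\Psi_j$, your fixed-point relation $G=\sum_jD_j(P_jG)S_j^*$ is the adjoint of the paper's $\bm{\Gamma}=(S_1\Gamma_1,\ldots,S_n\Gamma_n)D^*$, and your verification of the hypothesis of Lemma~\ref{psipsi*} and the resulting summability $\varSigma_{\hat{S}_j}(G_j^*G_j)=\varSigma_{\hat{S}_j}(\Gamma_j\Gamma_j^*)<\infty$ (via the parallelogram bound against $F_j^*F_j$ and $\pi_S^*\Psi_j\Psi_j^*\pi_S$) are also as in the paper. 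All of this is correct.

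The gap is the final step, which you leave as a hope (``should drive the remainder term to $0$''). Iterating $G=\sum_jD_jG_jS_j^*$ produces after $r$ steps a sum over $n^r$ words in the $S_j^*$'s, and the only naive bound on that tail is $\langle\sigma_S^r(I)k,k\rangle$, which for a $\bm\gamma$-hypercontraction on $\D^n$ need not tend to zero since $S$ is not a row contraction --- you diagnose this correctly but supply no repair, and the repair is the whole point. The paper does not iterate at all. From $G=\sum_jD_jG_jS_j^*$ and $D^*D\le I$ it extracts the single operator inequality $\sum_j(I-C_{S_j})(G_j^*G_j)\le 0$, then applies the Ces\`aro-type operator $\varSigma_S^N$ and uses the telescoping identity $\varSigma_S^N(I-C_{S_j})=\varSigma_{\hat{S}_j}^N(I-C_{S_j^N})$ together with the monotone bound $\varSigma_{\hat{S}_j}^N(G_j^*G_j)\le\varSigma_{\hat{S}_j}(G_j^*G_j)$ and purity of each single coordinate $S_j$ (so $S_j^N(\cdot)S_j^{*N}\to 0$ strongly) to conclude, in the limit $N\to\infty$, that $\sum_j\varSigma_{\hat{S}_j}(G_j^*G_j)\le 0$; since each summand dominates $G_j^*G_j\ge 0$, every $G_j$ vanishes. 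This is the precise ``coordinate by coordinate'' mechanism you allude to; without it your argument does not close.
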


\textit{Proof.}
Let
$
U=\begin{bmatrix}A &B\\C&D\end{bmatrix}
$ be the block decomposition of $U$.
Let $\Phi$ be the transfer function of the
unitary operator $U^*$ as defined in (\ref{Phi}) and $\Psi$ be as in (\ref{Psi}). Then by Lemma \ref{pro-psi} we have for all $f\in A^2_{\bm{\gamma}}(\D^n,\cld_{\bm{\gamma}, S^*})$,
\begin{align*}
U(\iota_{\cld_{\bm{\gamma}, S^*}}^*f, \Psi_1^*M_{z_1}^*f,\ldots,\Psi_n^*M_{z_n}^*f)& =(A\iota_{\cld_{\bm{\gamma}, S^*}}^*f+ \sum_{i=1}^n B_i\Psi_i^*M_{z_i}^*f , C\iota_{\cld_{\bm{\gamma}, S^*}}^*f+ \sum_{i=1}^n D_i\Psi_i^*M_{z_i}^*f)\\
&=(\iota_{\cld_{\bm{\gamma}, S^*}}^*M_{\Phi}^*f,\Psi^*f).
\end{align*}

Since $U$ is a unitary, applying Lemma \ref{psipsi*} we get that 
$\varSigma_{\hat{M}_{z_j}}(\Psi_j\Psi_j^*)< \infty$ for all 
$j=1,\dots,n$. This in turn implies that 
\[
\varSigma_{\hat{S}_{j}}(\pi_{S}^*\Psi_j\Psi_j^*\pi_{S})=\pi_{S}^*\varSigma_{\hat{M}_{z_j}}(\Psi_j\Psi_j^*)\pi_{S}<\infty,\]
for all $j=1,\dots,n$.
We set, for all $j=1,\ldots,n$, $\Gamma_j:=F_j^*-\pi_{S}^*\Psi_j$ and 
$\bm{\Gamma}:=(\Gamma_1,\ldots,\Gamma_{n})$. Then
\begin{align*}
\Gamma_j\Gamma_j^* \leq&(F_j^*-\pi_{S}^*\Psi_j)(F_j-\Psi_j^*\pi_{S})+(F_j^*+\pi_{S}^*\Psi_j)(F_j+\Psi_j^*\pi_{S})\\
=&2(F_j^*F_j+\pi_{S}^*\Psi_j\Psi_j^*\pi_{S}),
\end{align*}
and therefore, $\varSigma_{\hat{S}_j}(\Gamma_j\Gamma_j^*)<\infty$ 
for all $j=1,\dots,n$.

On the other hand, by using ~\eqref{Q and R}, the identity \eqref{id2} and Lemma \ref{pro-psi}, we have 
\begin{align*}
\bm{\Gamma}=& Q^*-\pi_{S}^*\Psi\\
=&D_{\bm{\gamma}, S^*}C^*+R^*D^*-\pi_{S}^*\iota_{\cld_{\bm{\gamma},S^*}}C^*-\pi_{S}^*\sum_{j=1}^{n}M_{z_j}\Psi_jD_j^*\\
=&D_{\bm{\gamma}, S^*}C^*+\sum_{j=1}^{n}S_jF_j^*D_j^*-D_{\bm{\gamma}, S^*}C^*-\sum_{j=1}^{n}S_{j}\pi_{S}^*\Psi_jD_j^*\\
=&\sum_{j=1}^{n}S_j\Gamma_jD_j^* = (S_1\Gamma_1,\ldots, S_{n}\Gamma_{n})D^*.
\end{align*}
Since $D$ is a contraction, we conclude that
\[
\bm{\Gamma}\bm{\Gamma}^*
\leq\sum_{j=1}^{n}S_j\Gamma_j\Gamma_j^*S_j^*,
\]
and subsequently
\[
\sum_{j=1}^{n}(I-
C_{S_j})(\Gamma_j\Gamma_j^*)=\sum_{j=1}^{n}\Gamma_j\Gamma_j^*-\sum_{j=1}^{n}S_j\Gamma_j\Gamma_j^*S_j^*\leq 0.
\]
This further implies that 

\[
\begin{split}
\sum_{j=1}^{n}\big(\varSigma_{\hat{S}_{j}}^N(\Gamma_j\Gamma_j^*)- S_j^{N} \varSigma_{\hat{S}_{j}}(\Gamma_j\Gamma_j^*)S_j^{*N}\big)
\leq& \sum_{j=1}^{n}\big(\varSigma_{\hat{S}_{j}}^N(\Gamma_j\Gamma_j^*)-S_j^{N}\varSigma_{\hat{S}_{j}}^N(\Gamma_j\Gamma_j^*)S_j^{*N}\big)\\
=&\sum_{j=1}^{n}\varSigma_{\hat{S}_{j}}^N(I-
C_{S^N_j})(\Gamma_j\Gamma_j^*)\\
=&\varSigma_{{S}}^N\sum_{j=1}^{n}(I-
C_{S_j})(\Gamma_j\Gamma_j^*)\leq 0.
\end{split}
\]
Here for the last equality we have used the identity 
$\varSigma_{\hat{S}_{j}}^N(I- C_{S^N_j}) =
 \varSigma_{S}^N(I- C_{S_j})$ which 
follows from the easily verifiable identity that $(I- C_{S_j}^N) =\varSigma_{{S}_{j}}^N(I-C_{S_j})$.
Since $S=(S_1,\ldots,S_n)$ is pure, then by passing to the limit as $N \to \infty$
we get 
\[\sum_{j=1}^{n}\varSigma_{\hat{S}_{j}}(\Gamma_j\Gamma_j^*)\leq 0.
\]
On the other hand,  by definition $\varSigma_{\hat{S}_{j}}(\Gamma_j\Gamma_j^*)\ge 0$ for all $j=1,\ldots,n$ and hence $\varSigma_{\hat{S}_{j}}(\Gamma_j\Gamma_j^*)=0$ for all $j=1,\ldots,n$. In other words,
\[ F_j^*-\pi_{S}^*\Psi_j=\Gamma_j= 0  \quad \quad  (j=1,\ldots,n). \]
Then for any 
$\bm{p}\in \Z_+^{n} $ and $\eta\in\cld_{\bm{\gamma}, T^*}$ we have
\[
\begin{split}
\pi_{S}^*M_{\Phi}(\z^{\bm{p}}\eta) 
=&S^{\bm{p}}\pi_{S}^*M_{\Phi}\iota_{\cld_{\bm{\gamma}, T^*}}\eta\\
=&S^{\bm{p}}\pi_{S}^*\big(\iota_{\cld_{\bm{\gamma}, S^*}}A^*+\sum_{j=1}^{n}M_{z_j}\Psi_jB_j^*\big)(\eta)\quad   [\text{by Lemma}~\ref{pro-psi}]\\
=&S^{\bm{p}}\big(D_{\bm{\gamma}, S^*}A^*+\sum_{j=1}^{n}S_{j}\pi_{S}^*\Psi_jB_j^*\big)(\eta) \quad [\text{by Lemma}~\ref{l2}]\\
=&S^{\bm{p}}\big(D_{\bm{\gamma}, S^*}A^*+\sum_{j=1}^{n}S_{j}F_j^*B_j^*\big)(\eta)\\
=&S^{\bm{p}}XD_{\bm{\gamma}, T^*}\eta\quad [\text{by} ~\eqref{id1}]\\
=&XT^{\bm{p}}D_{\bm{\gamma}, T^*}\eta\\
=&X\pi_{T}^*(\z^{\bm{p}}\eta)\quad [\text{by Lemma} ~\ref{l2}].
\end{split}
\]
Finally since $\{\z^{\bm{p}}\eta : \bm{p}\in \Z_+^{n}, \eta\in\cld_{\bm{\gamma}, T^*}\}$ forms a total subset of $A^2_{\bm{\gamma}}(\D^{n},\cld_{\bm{\gamma}, T^*})$, 
\[
\pi_{T}X^*= M_{\Phi}^*\pi_{S}.
\]
This completes the proof of the theorem.                         \qed
\begin{Remark}
The above theorem is also proved in ~\cite{BDS} for the particular case 
of the Hardy space over polydisc and it has been used to find isometric dilations of certain class of operator tuples and to obtain their sharp von Neumann inequality.

\end{Remark}

Now, we are in a position to state the main theorem of this section which is a generalization of Theorem 5.1 in \cite{BLTT}. Eventhough we use slight variation of techniques as in ~\cite{BLTT}, we make the proof self-contained for readers convenience.

\begin{Theorem}\label{Main Theorem in D^n}
Let $\bm{\gamma} \in \N^n$. Let   $T=(T_1,\ldots,T_n)$ and $S=(S_1,\ldots,S_n)$ be two pure $\bm\gamma$-hypercontractions on $\clh$ and $\clk$ respectively.  Suppose
 $X\in\clb(\clh,\clk)$ is a contraction such that $XT_i=S_iX$,
  for all $i=1,\ldots,n$. Then 
there exists a contractive multiplier $\Phi\in\mathcal{SA}(\D^n, \clb(\cld_{\bm{\gamma}, T^*},\cld_{\bm{\gamma}, S^*}))$ such that $\pi_{T} X^*=M_{\Phi}^*\pi_{S}$ if and only if there exist positive operators $G_1,\ldots,G_n$ in $\clb(\clk)$ such that 
\[
			I-XX^*=G_1+\cdots+G_n
			\]
			and 
			\[
			\prod_{\stackrel{j=1}{j\ne i}}^{n} (I-
			C_{S_j})^{\gamma_j}
			(I-
			C_{S_i})^{\gamma_i-1}(G_i)\geq 0
\]
for all $i=1,\ldots,n$, where $\pi_{T}$ and $\pi_{S}$ are canonical dilation maps
 of $T$ and $S$ respectively.
\end{Theorem}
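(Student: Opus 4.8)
My plan is to prove the two implications separately: sufficiency will be reduced to Theorem~\ref{key theorem} by manufacturing an explicit unitary, while necessity will be read off from an Agler decomposition of the lifting multiplier. Writing $C_{S_j}(\cdot)=S_j(\cdot)S_j^*$, recall that $D_{\bm{\gamma},S^*}^2=\prod_{j=1}^n(I-C_{S_j})^{\gamma_j}(I)$ and similarly for $T$; since $XT_j=S_jX$ forces $X\prod_j(I-C_{T_j})^{\gamma_j}(I)X^*=\prod_j(I-C_{S_j})^{\gamma_j}(XX^*)$, we obtain the basic identity
\[
D_{\bm{\gamma},S^*}^2-XD_{\bm{\gamma}, T^*}^2X^*=\prod_{j=1}^n(I-C_{S_j})^{\gamma_j}(I-XX^*),
\]
which underlies both directions.

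For sufficiency, assume the $G_i$ exist and put $H_i:=\prod_{j\ne i}(I-C_{S_j})^{\gamma_j}(I-C_{S_i})^{\gamma_i-1}(G_i)\ge 0$, $F_i:=H_i^{1/2}$, and $\clf_i:=\overline{\text{ran}}\,F_i$. From the telescoping relation $\varSigma_{S_j}^N(I-C_{S_j})(A)=A-S_j^NAS_j^{*N}$ and the purity of each $S_j$ one gets $\varSigma_{S_j}(I-C_{S_j})^{\gamma_j}=(I-C_{S_j})^{\gamma_j-1}$, so $\varSigma_{\hat S_i}(F_i^*F_i)$ exists for every $i$, as required by Theorem~\ref{key theorem}. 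Next, using $\sum_iG_i=I-XX^*$,
\[
\sum_{i=1}^n(I-C_{S_i})(F_i^*F_i)=\sum_{i=1}^n\prod_{j=1}^n(I-C_{S_j})^{\gamma_j}(G_i)=\prod_{j=1}^n(I-C_{S_j})^{\gamma_j}(I-XX^*),
\]
which, via the basic identity, is exactly the operator form of the norm balance $\|D_{\bm{\gamma},S^*}k\|^2+\sum_i\|F_iS_i^*k\|^2=\|D_{\bm{\gamma}, T^*}X^*k\|^2+\sum_i\|F_ik\|^2$. This equality turns the assignment in~\eqref{generating identity} into a well-defined isometry between the two indicated subspaces; adjoining a sufficiently large $\cll$ to equalize the defect spaces extends it to the unitary $U$ of~\eqref{unitary in D^n}, and Theorem~\ref{key theorem} then delivers the desired $\Phi$ with $\pi_TX^*=M_\Phi^*\pi_S$.

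For necessity, $\pi_TX^*=M_\Phi^*\pi_S$ together with $\pi_T^*\pi_T=\pi_S^*\pi_S=I$ gives $I-XX^*=\pi_S^*(I-M_\Phi M_\Phi^*)\pi_S$. As $\Phi$ is Schur-Agler, the colligation of Theorem~\ref{char Schur-Agler class in D^n} yields an Agler decomposition $I-\Phi(z)\Phi(w)^*=\sum_{i=1}^n(1-z_i\bar w_i)\Gamma_i(z,w)$ with each $\Gamma_i\succeq 0$. The pivotal observation is that $K_{\bm{\gamma}}(z,w)(1-z_i\bar w_i)$ is again a weighted Bergman kernel, namely the one with $\gamma_i$ lowered by one; hence the positive kernel $K_{\bm{\gamma}}(z,w)(1-z_i\bar w_i)\Gamma_i(z,w)$ defines a bounded positive operator $\mathcal G_i$ on $A^2_{\bm{\gamma}}(\D^n,\cld_{\bm{\gamma},S^*})$ (boundedness from $0\le\mathcal G_i\le\sum_j\mathcal G_j=I-M_\Phi M_\Phi^*\le I$), with $\sum_i\mathcal G_i=I-M_\Phi M_\Phi^*$. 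Setting $G_i:=\pi_S^*\mathcal G_i\pi_S\ge 0$ gives $\sum_iG_i=I-XX^*$. For the positivity conditions, the identity $M_{z_l}^*K_{\bm{\gamma}}(\cdot,w)=\bar w_lK_{\bm{\gamma}}(\cdot,w)$ shows that $(I-C_{M_{z_l}})$ multiplies reproducing-kernel entries by $(1-w_l'\bar w_l)$, so applying $\prod_{j\ne i}(I-C_{M_{z_j}})^{\gamma_j}(I-C_{M_{z_i}})^{\gamma_i-1}$ to $\mathcal G_i$ cancels every weight and returns an operator whose kernel is $\Gamma_i\succeq 0$. Finally $\pi_SS_l^*=M_{z_l}^*\pi_S$ gives $C_{S_l}(G_i)=\pi_S^*C_{M_{z_l}}(\mathcal G_i)\pi_S$, whence $\prod_{j\ne i}(I-C_{S_j})^{\gamma_j}(I-C_{S_i})^{\gamma_i-1}(G_i)=\pi_S^*\big(\prod_{j\ne i}(I-C_{M_{z_j}})^{\gamma_j}(I-C_{M_{z_i}})^{\gamma_i-1}(\mathcal G_i)\big)\pi_S\ge 0$.

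The main obstacle is the power-counting in the necessity direction: the Agler decomposition supplies only a single factor $(1-z_i\bar w_i)$ per summand, and it must be balanced against the $\gamma_i$-fold weight in $K_{\bm{\gamma}}$ so that exactly one power survives in the $i$-th slot — this is precisely what forces the asymmetric exponents $\gamma_i-1$ against $\gamma_j$ $(j\ne i)$ in the positivity conditions, and it reduces to the simple kernel identity highlighted above. Secondary points needing care are the boundedness of the a priori merely form-positive operators $\mathcal G_i$ (handled by domination), and, on the sufficiency side, the genuine extendability of the norm-preserving map to a unitary, which is exactly why the auxiliary summand $\cll$ is carried in the definition of $\clf$.
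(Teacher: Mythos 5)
Your proposal is correct, and it splits cleanly against the paper's proof: your sufficiency argument (from the $G_i$ to $\Phi$) is essentially identical to the paper's, while your necessity argument is a genuinely different route. For sufficiency, you and the paper both set $F_i^2=\prod_{j\ne i}(I-C_{S_j})^{\gamma_j}(I-C_{S_i})^{\gamma_i-1}(G_i)$, derive the norm balance from $\sum_i(I-C_{S_i})(F_i^2)=\prod_j(I-C_{S_j})^{\gamma_j}(I-XX^*)=D_{\bm\gamma,S^*}^2-XD_{\bm\gamma,T^*}^2X^*$, verify $\varSigma_{\hat S_i}(F_i^2)<\infty$ (the paper bounds $\varSigma_{\hat S_i}^N(F_i^2)\le G_i$ via the identity $\varSigma_{S_j}^N(I-C_{S_j})=I-C_{S_j^N}$; your telescoping-plus-purity computation of the limit is the same mechanism), and then invoke Theorem~\ref{key theorem}. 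For necessity, the paper follows Ball--Li--Timotin--Trent: it introduces the weak-$*$ closed convex cone $\clc$ of decomposable operators, runs a Krein--Smulian/separation argument with trace-class functionals, builds a semi-inner product and commuting contractions $C_i$ on a quotient of the commutant, and uses the defining property $\|\Phi(rC_1,\dots,rC_n)\|\le 1$ of the Schur--Agler class to conclude $I-M_\Phi M_\Phi^*\in\clc$. You instead extract an Agler decomposition $I-\Phi(z)\Phi(w)^*=\sum_i(1-z_i\bar w_i)\Gamma_i(z,w)$ from the colligation of Theorem~\ref{char Schur-Agler class in D^n}, Schur-multiply by $K_{\bm\gamma}$ to get positive kernels dominated by that of $I-M_\Phi M_\Phi^*$ (hence bounded positive operators $\mathcal G_i$), and read off the positivity conditions from the exact cancellation of weights on reproducing kernels; the compression by $\pi_S$ then yields the $G_i$. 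Your route is more direct and avoids the duality/separation machinery, but it leans on the transfer-function realization as a black box, whereas the paper's cone argument works straight from the hereditary definition of the Schur--Agler class; both are valid, and your power-counting explanation of the asymmetric exponents $\gamma_i-1$ versus $\gamma_j$ is a nice conceptual complement to the paper's more opaque cone membership conditions.
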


\textit{Proof.} For the if part, we consider a cone $\clc$ of bounded operators on $A^2_{\bm{\gamma}}(\D^n,\cld_{\bm{\gamma}, S^*})$ as
\[
\clc:=\big\{R=G_1+\cdots+G_n : G_i\geq0\, ,\, \prod_{\stackrel{j=1}{j\ne i}}^{n} (I- C_{M_{z_j}})^{\gamma_j}
(I- C_{M_{z_i}})^{\gamma_i-1}(G_i)\geq 0, i=1,\ldots,n\big\}.\]
The convexity of $\clc \subseteq \clb(A^2_{\bm{\gamma}}(\D^n,\cld_{\bm{\gamma}, S^*}))$ follows from the definition. Now we claim that $\clc$ is weak-$\ast$ closed. To this end, by Krein-Smulian theorem, it is enough to show that for all $r>0$,  
\[
B_r:=\clc\cap  \clb_r(A^2_{\bm{\gamma}}(\D^n,\cld_{\bm{\gamma}, S^*}))
\] 
is weak-$\ast$ closed, where $\clb_r(A^2_{\bm{\gamma}}(\D^n,\cld_{\bm{\gamma}, S^*}))$ is the closed ball of radius  $r$ in $\clb(A^2_{\bm{\gamma}}(\D^n,\cld_{\bm{\gamma}, S^*}))$.
 Since $B_r$ is norm bounded, weak-$\ast$ topology and weak operator topology are same. Let $R^{(\alpha)}=G_1^{(\alpha)}+\cdots+G_n^{(\alpha)}$ be a net in $B_r$ which converges to $R$ in weak operator topology. Then the bounded net $G_i^{(\alpha)}$ in $\clc$ has a subnet which converges to $G_i$ for all $i=1,\dots,n$. Then $R= G_1+\cdots+ G_n$ and it follows that 
 $R\in B_r$. This proves the claim.

Now we define 
\[
\clt_0:=\big\{R\in\clb(A^2_{\bm{\gamma}}(\D^n,\cld_{\bm{\gamma}, S^*}))\,:\,RM_{z_i}=M_{z_i}R,  i=1,\ldots,n\big\}.
\]
With an straight-forward computation using properties of the Bergman shift $M_z$ we have the following two properties. The first one of which is also noted in \cite[Lemma 3.5]{CV1}. 
\begin{enumerate}[(a)]
\item For each $i=1,\ldots,n$, $\prod_{\stackrel{j=1}{j\ne i}}^{n} (I-C_{M_{z_j}})^{\gamma_j} (I-C_{M_{z_i}})^{\gamma_i-1}(I)\geq 0$, \text{ and }
\item $\prod_{j=1}^{n} (I-C_{M_{z_j}})^{\gamma_j}(I)=P_{\cle}$.
\end{enumerate} 
These properties ensure membership of following elements in $\clc$:  
\begin{enumerate}[(i)]
\item If $R\in\clt_0$ then $RR^*\in\clc$. To see this, we choose $G_1=RR^*$ and $G_i=0$ for $i\geq2$. Then $RR^*=G_1+\cdots+G_n$ and note that,
for all $i=1,\ldots,n$,
\[
\prod_{\stackrel{j=1}{j\ne i}}^{n} (I-C_{M_{z_j}})^{\gamma_j}
((I-C_{M_{z_i}})^{\gamma_i-1}RR^*)=R\prod_{\stackrel{j=1}{j\ne i}}^{n} (I-C_{M_{z_j}})^{\gamma_j}
((I-C_{M_{z_i}})^{\gamma_i-1}I)R^*\ge 0.
\]

\item If $R\in\clt_0$ then for each $i=1,\ldots,n$, $RR^*-M_{z_i}RR^*M_{z_i}^*\in\clc$. In this case, for fixed $i$, we choose $G_i=RR^*-M_{z_i}RR^*M_{z_i}^*$ and $G_j=0$ for all $j\neq i$. Then 
 $RR^*-M_{z_i}RR^*M_{z_i}^*=G_1+\cdots+G_n$ and 
\[
\begin{split}
\prod_{\stackrel{j=1}{j\ne i}}^{n} (I-C_{M_{z_j}})^{\gamma_j}
\big((I-C_{M_{z_i}})^{\gamma_i-1}(RR^*-M_{z_i}RR^*M_{z_i}^*)\big)
=&R\prod_{j=1}^{n} (I-C_{M_{z_j}})^{\gamma_j}
(I)R^*\ge 0.
\end{split}
\] 
\end{enumerate}
We intend to show that $I-M_{\Phi}M_{\Phi}^*\in\clc$ and it will be established through a separation argument. 
Consider a trace class operator $A\in\clb(A^2_{\bm{\gamma}}(\D^n,\cld_{\bm{\gamma}, S^*}))$ for which 
\[
0\leq\mathfrak{R}[\text{Tr}(AR)]\quad(\text{for all}\,R \in \clc).
\]
Since $\clc \subseteq \clb(A^2_{\bm{\gamma}}(\D^n,\cld_{\bm{\gamma}, S^*}))$ is a weak-$\ast$ closed convex set, by separation argument, it is enough to show that
 $\mathfrak{R}[\text{Tr}(A(I-M_{\Phi}M_{\Phi}^*))]\geq0$. 
To this end, we define a sesquilnear form $\rho: \clt_0\times\clt_0\to\mathbb{C}$ by
\[
\rho(R,R')=\frac{1}{2}[\text{Tr}(ARR'^*)+\overline{\text{Tr}(AR'R^*)}].
\]
It is easy to check that $\rho$ defines a semi inner product on $\clt_0$. Let $\cln_0=\{R\,:\,\rho(R,R)=0\}$. Consider the quotient space $\clt_0/\cln_0$ and define a norm $\||.|\|$ on $\clt_0/\cln_0$ by
\[
\||(R+\cln_0)|\|=\rho(R,R)^{1/2}.
\]
Let $\clt$ be the completion of $\clt_0/\cln_0$ with respect to the norm 
$\||.|\|$. Then $\clt$ is a Hilbert space. For all $i=1,\ldots,n$, define $C_i: \clt_0/\cln_0\to \clt_0/\cln_0 $ by
\[
C_i(R+\cln_0)=M_{z_i}R+\cln_0.
\]
The map $C_i$ is a well-defined contraction because
\[
\||(R+\cln_0)|\|^2-\||(M_{z_i}R+\cln_0)|\|^2=\mathfrak{R}[\text{Tr}(A(RR^*-M_{z_i}RR^*M_{z_i}^*))]\geq0
\]
as $RR^*-M_{z_i}RR^*M_{z_i}^*\in\clc$. Hence $C_i$ extends uniquely by continuity to a contraction on $\clt$, also denoted by $C_i$. It is also immediate from the definition that $C_i$ and $C_j$ commute for all $i,j=1,\ldots,n$. Now $\Phi\in\mathcal{SA}(\D^n,\clb(\cld_{\bm{\gamma}, T^*},\cld_{\bm{\gamma}, S^*}))$ implies that $\|\Phi(rC_1,\ldots,rC_n)\|\leq1$ for all $r<1$. Also, Taylor series expansion of $\Phi$ shows that 
\[
\Phi(rC_1,\ldots,rC_n)(I)=\Phi(rM_{z_1},\ldots,rM_{z_n})=M_{\Phi_r}
\]
where $\Phi_r(z_1,\ldots,z_n):=\Phi(rz_1,\ldots,rz_n)$. Therefore, by Lemma 1.1 of \cite{BLTT},
\[
\begin{split}
\mathfrak{R}[\text{Tr}A(I-M_{\Phi}M_{\Phi}^*)]=&\lim_{r\to1}\mathfrak{R}[\text{Tr}A(I-M_{\Phi_r}M_{\Phi_r}^*)]\\
=&\lim_{r\to1}\{\rho(I,I)-\rho(\Phi(rC_1,\ldots,rC_n)(I),\Phi(rC_1,\ldots,rC_n)(I))\}\geq0
\end{split}
\]
as desired.
Thus $I-M_{\Phi}M_{\Phi}^*\in\clc$ and therefore,
 there exist $G_1,\ldots,G_n\geq 0$ with $I-M_{\Phi}M_{\Phi}^*=G_1+\cdots+G_n$ and
\[ \prod_{\stackrel{j=1}{j\ne i}}^{n} (I- C_{M_{z_j}})^{\gamma_j}
((I- C_{M_{z_i}})^{\gamma_i-1}G_i)\geq 0.
\]
Finally, using these positive operators $G_i$ we have 
\begin{align*}
I-XX^*=&\pi_{S}^*(I-M_{\Phi}M_{\Phi}^*)\pi_{S}\\
=&\pi_{S}^*G_1\pi_{S}+\cdots+\pi_{S}^*G_n\pi_{S},
\end{align*}
and
\begin{align*}
\prod_{\stackrel{j=1}{j\ne i}}^{n} (I- C_{S_j})^{\gamma_j}
\big((I-C_{S_i})^{\gamma_i-1}(\pi_{S}^*G_i\pi_{S}) \big)
=\pi_{S}^*\prod_{\stackrel{j=1}{j\ne i}}^{n} 
(I- C_{M_{z_j}})^{\gamma_j}\big((I- C_{M_{z_i}})^{\gamma_i-1}G_i \big)\pi_{S}\geq 0,
\end{align*}
for all $i=1,\dots,n$. This completes the proof of the if part.

For the converse part, first note that
\begin{align*}
D_{\bm{\gamma}, S^*}^2-XD_{\bm{\gamma}, T^*}^2X^* &= \prod_{j=1}^{n}
(I- C_{S_j})^{\gamma_j}(I-XX^*)\\
& = \sum_{i=1}^{n}\prod_{j=1}^{n}
(I-C_{S_j})^{\gamma_j}(G_i)\\
&= \sum_{i=1}^{n}
\prod_{\stackrel{j=1}{j\ne i}}^{n} (I- C_{S_j})^{\gamma_j}
\big((I-C_{S_i})^{\gamma_i-1} (I- C_{S_i})(G_i)\big) \\
&=\sum_{i=1}^{n}(F_i^2-S_iF_i^2S_i^*),
\end{align*}
where $F_i^2 = \prod_{\stackrel{j=1}{j\ne i}}^{n} (I-C_{S_j})^{\gamma_j}
\big((I- C_{S_i})^{\gamma_i-1}G_i \big)$ for each $i=1,\ldots,n$. 
This establish the identity
\[
(D_{\bm{\gamma}, S^*})^2 + \sum_{i=1}^{n} S_iF_i^2S_i^* = X(D_{\bm{\gamma}, T^*})^2X^* +\sum_{i=1}^{n}F_i^2,
\]
and therefore, by adding an infinite dimensional Hilbert space $\cll$ if necessary, we construct a unitary $U: \cld_{\bm{\gamma}, S^*}\oplus(\oplus_{i=1}^{n-1}\clf_i)\oplus (\clf_n\oplus\cll)
\to \cld_{\bm{\gamma}, T^*}\oplus(\oplus_{i=1}^{n-1}\clf_i)\oplus(\clf_n\oplus \cll)$
 such that 
\[
U (D_{\bm{\gamma}, S^*} k, F_1S_1^*k,\dots, F_{n}S_{n}^*k, 0_{\cll}) =
 (D_{\bm{\gamma}, T^*}X^*k, F_1 k,\dots, F_{n}k, 0_{\cll}), \quad (k\in\clk)
\]
where $\clf_i=\overline{\text{ran}}F_i$ for all $i=1,\dots,n$. 
Also for each $i=1,\ldots,n$, we have
\begin{align*}
\varSigma_{\hat{S}_{i}}^N(F_i^*F_i) 
&\leq (\varSigma_{{S}_{1}}^N)^{\bm{\gamma}_1}\cdots(\varSigma_{{S}_{i}}^N)^{\bm{\gamma}_{i}-1}\cdots\big(\varSigma_{{S}_{n}}^N\big)^{\bm{\gamma}_n}(F_i^*F_i) \\
&=(\varSigma_{{S}_{1}}^N)^{\bm{\gamma}_1}\cdots
(\varSigma_{{S}_{i}}^N)^{\bm{\gamma}_{i}-1}\cdots(\varSigma_{{S}_{n}}^N)^{\bm{\gamma}_n}
\prod_{\stackrel{j=1}{j\ne i}}^{n} (I-C_{S_j})^{\gamma_j}\big((I-
C_{S_i})^{\gamma_i-1}(G_i)\big)\\
&=\prod_{\stackrel{j=1}{j\ne i}}^{n} (I- C_{S_j^N})^{\gamma_j}\big((I- C_{S_i^N})^{\gamma_i-1}(G_i)\big) \leq\, G_i.
\end{align*}
This implies
$\varSigma_{\hat{S}_{i}}(F_i^*F_i)$ exists for all $i=1,\ldots,n$.
Therefore, with an direct application of Theorem~\ref{key theorem}, we
 conclude that
\[
\pi_{T}X^*= M_{\Phi}^*\pi_{S}
\]
for a contractive multiplier $\Phi\in \cls\cla(\D^n,\clb(\cld_{\bm{\gamma}, T^*},\cld_{\bm{\gamma}, S^*}))$ which is the transfer function of the unitary $U^*$ constructed above. This completes the proof.       \qed

\newsection{Concluding Remarks}\label{Concluding section}
We first consider an easy example of an operator and its lifting which 
often one consider in the commutant lifting approach of Nevanlinna and Pick interpolation problem. Given any set of $r$ distinct points $\z_1,\ldots,\z_r \in \B^n$ and $w_1,\ldots,w_r \in \D$. Consider the co-invariant subspace  
\[
 \clq: = \text{Span} \{ K_m(.,\z_1), \ldots, K_m(.,\z_r) \} \subseteq \bH_m(\B^n,\C)
\]
for some $m\in\mathbb N$ and define an operator $X: \clq \to \clq$ 
such that 
\[
 X^*K_m(.,\z_i)= \bar{w}_iK_m(.,\z_i)\quad ( i=1,\ldots,r).
 \] 
Then, it is easy to see that $X^*M_{z_j}^*|_{\clq} = M_{z_j}^*|_{\clq}X^* $ for all 
$j=1,\dots,n$. In other 
words, $(P_{\clq}M_{z_j}|_{\clq}) X=X (P_{\clq}M_{z_j}|_{\clq})$ for all 
$j=1,\dots,n$.  In this case, by a standard calculation, $X$ is a contraction if and only if the matrix 
\begin{equation}\label{positivity 1}
[(1- w_i\bar{w}_j)K_m(\z_i,\z_j)]_{i,j=1}^r
\end{equation}
 is positive-definite. Now, by Theorem~\ref{CLT for m-hyper in ball set up}, we conclude that the operator $X$ has a lifting to a Schur-Agler class function $\Phi$ on $\B^n$ if and only if
 $(1- \sigma_T)^{m-1}(I - XX^*) \geq 0$ where $T=P_{\clq}M_z|_{\clq}$. 
In this set up, again a straightforward computation shows that the positivity of the operator $(1- \sigma_T)^{m-1}(I - XX^*)$ is equivalent to the positivity of the matrix 
\begin{equation}\label{positivity 2}
 \begin{bmatrix}
(1- w_i \bar{w}_j)K_1(\z_i, \z_j)
\end{bmatrix}_{i,j=1}^r.
\end{equation}
Since $X^*=M_{\Phi}^*|_{\clq}$, it follows that $\Phi(\z_i)=w_i$ for all $i=1,\dots,r$. Observe that the positivity of the matrix in ~\eqref{positivity 2} implies the positivity of the matrix in ~\eqref{positivity 1}.
Form the above discussion we have the following well-known interpolation result:
\textit{Given $r$ distinct points $\z_1,\ldots,\z_r \in \B^n$ and $w_1,\ldots,w_r \in \D$,
there is a Schur-Agler function $\Phi$ on $\B^n$ with $\Phi(z_i)=w_i$ for all 
$i=1,\dots,r$ if and only if 
 \[\begin{bmatrix}
\frac{(1- w_i \bar{w}_j)}{1-\langle \z_i, \z_j\rangle}
\end{bmatrix}_{i,j=1}^r\ge 0.
\]
}
Even though the above result is known, the present proof provides a somewhat more explicit description of the interpolant $\Phi$ due to our explicit commutant lifting theorem. Needless to say that similar result is also true if we replace $w_i$
by a contraction $W_i\in \clb(\cle_1,\cle_2)$ for some Hilbert spaces $\cle_1$ and $\cle_2$ and for all $i=1,\dots,r$.

We conclude the paper with the following question. The reader must have 
observed that the present intertwining lifting results, in both the setting of reproducing kernels over $\B^n$ or $\D^n$, always concern Schur-Agler functions. In most cases, Schur-Agler functions is strictly smaller than the whole multiplier algebra of respective reproducing kernel Hilbert spaces.
Therefore one can ask:

\textit{What is the necessary and sufficient condition for an intertwining lifting theorem for the multiplier algebra of weighted Bergman spaces
over $\B^n$ with $n\ge 2$ or over $\D^n$ with $n>2$?}

\vspace{0.1in} \noindent\textbf{Acknowledgement:}
The second named author acknowledges Indian Institute of Technology Bombay for warm hospitality. The research of the second named author is supported by the institute post-doctoral fellowship of IIT Bombay.  
The third named author's  research work is supported by DST-INSPIRE Faculty Fellowship No. DST/INSPIRE/04/2015/001094.

\bibliographystyle{amsplain}

\end{document}